\title{Numerical methods for changing type systems}
\author{Sebastian Franz\footnote{
           Institut f\"ur Numerische Mathematik, Technische Universit\"at Dresden,
           01062 Dresden, Germany.
           \mbox{e-mail}: sebastian.franz@tu-dresden.de}\quad
        Sascha Trostorff\footnote{
           Institut f\"ur Analysis, Technische Universit\"at Dresden,
           01062 Dresden, Germany.
           \mbox{e-mail}: sascha.trostorff@tu-dresden.de}\quad
        Marcus Waurick\footnote{
           Department of Mathematical Sciences, University of Bath,
           Bath, UK.
           \mbox{e-mail}: m.waurick@bath.ac.uk}
        }
\date{\today}
\let\my@saved@original@eqref\eqref 
\renewcommand*{\eqref}[1]{
  \begingroup
    \let\normalfont\relax
    \my@saved@original@eqref{#1}
  \endgroup
}
\newcommand{\e}{\mathrm{e}}
\newcommand{\grad}{\nabla}
\DeclareMathOperator{\Div}{{div}}
\newcommand{\jump}[1]{[\hspace*{-2pt}[#1]\hspace*{-2pt}]}
\newcommand{\norm}[2]{\|{#1}\|_{#2}}
\newcommand{\snorm}[2]{|{#1}|_{#2}}
\newcommand{\tnorm}[1]{\left|\!\!\;\left|\!\!\;\left| {#1}
                       \right|\!\!\;\right|\!\!\;\right|}
\newcommand{\scp}[1]{\langle #1 \rangle_H}
\newcommand{\N}{\mathbb{N}}
\newcommand{\R}{\mathbb{R}}
\newcommand{\U}{\mathcal{U}}
\newcommand{\PS}{\mathcal{P}}
\newcommand{\QS}{\mathcal{Q}}
\newcounter{tmp}
\newcommand{\makeballnumber}[1]{\setcounter{tmp}{\theenumi}%
\setcounter{enumi}{#1}%
\leavevmode \csname beamer@@tmpl@enumerate item\endcsname%
\setcounter{enumi}{\thetmp}}
\newcommand{\makeball}{\leavevmode \csname beamer@@tmpl@itemize item\endcsname}
\definecolor{seb}{rgb}{0.9,0,0}
\newcommand{\vn}{\boldsymbol{n}}
\newcommand{\vx}{\boldsymbol{x}}
\newcommand{\dt}{\,\mathrm{d}t}
\renewcommand*\env@matrix[1][r]{\hskip -\arraycolsep
  \let\@ifnextchar\new@ifnextchar
  \array{*\c@MaxMatrixCols #1}}
\numberwithin{equation}{section}
\tikzset{external/system call={pdflatex \tikzexternalcheckshellescape -interaction=batchmode -jobname "\image" "\texsource";
convert -density 600 -transparent white "\image.pdf" "\image.png"}}
\renewcommand{\boldsymbol}[1]{\mathbf{#1}}
\renewcommand{\phi}{\varphi}
\newcommand*{\dive}{\operatorname{div}}
\DeclareMathAlphabet{\mathcal}{OMS}{cmsy}{m}{n}
\newcommand{\m}{\mathrm{m}}
\theoremstyle{plain}
\newtheorem{thm}{Theorem}[section]
\newtheorem{lem}[thm]{Lemma}
\newtheorem{prop}[thm]{Proposition}
\newtheorem{cor}[thm]{Corollary}
\theoremstyle{definition}
\newtheorem*{deff}{Definition}
\newtheorem{rem}[thm]{Remark}
\newtheorem{exam}[thm]{Example}
\newcommand{\lin}{\mathrm{lin}\,}
\newcommand{\id}[1]{\chi_{#1}}
\newcommand{\scpr}[1]{\langle #1 \rangle_\rho}
\newcommand{\scprm}[1]{\langle #1 \rangle_{\rho,m}}
\newcommand{\dd}{\,\mathrm{d}}
\newcommand{\tmi}{t_{m,i}}
\newcommand{\Qm}[1]{Q_m\left[#1\right]}
\newcommand{\Qmr}[1]{Q_m\left[#1\right]_\rho}
\begin{document}
 \pagestyle{fancy}
  \maketitle
  \begin{abstract}
     In this note we develop a numerical method for partial differential equations with changing type. Our method is based on a unified solution theory found by Rainer Picard for several linear equations from mathematical physics. Parallel to the solution theory already developed, we frame our numerical method in a discontinuous Galerkin approach in space-time with certain exponentially weighted spaces.
  \end{abstract}

  \textit{AMS subject classification (2010):} 65J08, 65J10, 65M12, 65M60

  \textit{Key words:} evolutionary equations, changing type system, discontinuous Galerkin, space-time approach

 \section*{Acknowledgements}
  M.W.~carried out this work with financial support of the EPSRC frant EP/L018802/2: ``Mathematical foundations of metamaterials: homogenisation, dissipation and operator theory''. This is gratefully acknowledged.
 \section{Introduction}
   Following the rationale presented in \cite{Picard}, most of the classical linear partial differential equations arising in mathematical physics share a common form, namely the form of an evolutionary problem. That is, we consider equations of the form
  \begin{equation}\label{eq:evo}
   (\partial_tM_0+M_1+A)U=F,
  \end{equation}
where $F$ is a given source term, $\partial_t$ stands for the derivative with respect to time,  $M_0,M_1$ are bounded linear operators on some Hilbert space $H$ and $A$ is an unbounded skew-selfadjoint operator in $H$. We are seeking for a unique solution $U$ of the above equation. We remark here that we do not impose initial conditions, since we consider the whole real line as time horizon, and hence, we implicitly assume a vanishing initial value at ``$-\infty$''. To illustrate the setting, we begin with presenting some examples.
\begin{exam}\label{ex:illus}
 Let $\Omega\subseteq \mathbb{R}^n$ an open non-empty set, where $n\in \N$, but, typically $n\in \{1,2,3\}$. We define the following two differential operators  
 \[
  \grad_0: H_0^1(\Omega)\subseteq L^2(\Omega)\to L^2(\Omega)^n, 
 \]
assigning each function $u\in H_0^1(\Omega)$ its gradient, that is, the column-vector of its partial derivatives in each coordinate direction. Moreover, we set  
\[
 \dive\coloneqq -(\grad_0)^\ast :D(\dive)\subseteq L^2(\Omega)^n\to L^2(\Omega),
\]
which is nothing but the operator assigning each $L^2$ vector-field its distributional divergence with maximal domain, that is,
\[
 D(\dive)=\{ v\in L^2(\Omega)^n\, :\, \sum_{i=1}^n \partial_i v_i \in L^2 (\Omega)\}.
\]
Since both the operators $\grad_0$ and $\dive$ are closed and skew-adjoints of one another, we infer that the operator 
\[
 A\coloneqq \begin{pmatrix}
             0 & \Div \\
             \grad_0 & 0
            \end{pmatrix}:D(\grad_0)\times D(\dive)\subseteq L^2 (\Omega)\times L^2(\Omega)^n\to L^2 (\Omega)\times L^2 (\Omega)^n
\]
is skew-selfadjoint on the Hilbert space $H=L^2(\Omega)\times L^2 (\Omega)^n$. Choosing $M_0=1$ and $M_1=0$ in \eqref{eq:evo}, the corresponding evolutionary problem reads as 
\[
 \left(\partial_t +\begin{pmatrix}
             0 & \dive \\
             \grad_0 & 0
            \end{pmatrix}\right) \begin{pmatrix}
                                  u\\
                                  v
                                  \end{pmatrix} =\begin{pmatrix}
                                                  f\\
                                                  g
                                                  \end{pmatrix}.
\]
If $g=0$, this is nothing but the wave equation. Indeed, the second line then gives $\partial_t v=-\grad_0u$, and hence, differentiating the first line with respect to time, we obtain 
\[
\partial_{t}^2u-\dive\grad_0 u=\partial_{tt}u+\dive \partial_t v=\partial_t f.
\]
Note that $\dive\grad_0=\Delta_{\textnormal{D}}$ is the classical Dirichlet--Laplace operator on $L^2(\Omega)$.

Choosing $M_0=\begin{pmatrix}
                   1 & 0 \\
                   0 & 0
                  \end{pmatrix}$ and $M_1=\begin{pmatrix}
                                       0 & 0 \\
                                       0 & 1 
                                       \end{pmatrix}$ in \eqref{eq:evo}, the corresponding problem reads as 
\[
 \left(\partial_t\begin{pmatrix}
                   1 & 0 \\
                   0 & 0
                  \end{pmatrix}+\begin{pmatrix}
                                       0 & 0 \\
                                       0 & 1 
                                       \end{pmatrix}+\begin{pmatrix}
             0 & \dive \\
             \grad_0 & 0
            \end{pmatrix}\right)\begin{pmatrix}
                                  u\\
                                  v
                                  \end{pmatrix} =\begin{pmatrix}
                                                  f\\
                                                  g
                                                  \end{pmatrix}.
\]
Setting again $g=0$, the latter gives the heat equation. Indeed, the second line reads $v=-\grad_0u$ and hence the first line yields
\[
 \partial_t u-\dive\grad_0u=\partial_t+\dive v=f.
\]
Finally, choosing $M_0=0$ and $M_1=1$ in \eqref{eq:evo}, we get
\[
 \left(1+\begin{pmatrix}
             0 & \dive \\
             \grad_0 & 0
            \end{pmatrix}\right)\begin{pmatrix}
                                  u\\
                                  v
                                  \end{pmatrix} =\begin{pmatrix}
                                                  f\\
                                                  g
                                                  \end{pmatrix},
\]
which in the case $g=0$ gives the elliptic equation
\[
 u-\dive \grad_0u=f.
\]
\end{exam}

\begin{rem}
 We note that we can treat the case of homogeneous Neumann boundary conditions in the same way. The only difference is that we define $\grad$ as the distributional gradient on $H^1(\Omega)$ and $\dive_0\coloneqq -(\grad)^\ast$. Replacing now $\grad_0$ by $\grad$ and $\dive$ by $\dive_0$ yields the same hyperbolic, parabolic and elliptic type problem above, but now with homogeneous Neumann boundary conditions. 
\end{rem}

Example \ref{ex:illus} shows that evolutionary problems cover all three classical types of partial differential equations, elliptic, parabolic and hyperbolic. However, also problems of mixed type are covered as the next example shows.
\begin{exam}\label{ex:CTS}
 Recall the setting of Example \ref{ex:illus}. We decompose $\Omega$ into three measurable, disjoint sets $\Omega_{\mathrm{ell}},\Omega_{\mathrm{par}}$ and $\Omega_{\mathrm{hyp}}$ and set $M_0=\begin{pmatrix}
                                                                                                                                               \chi_{\Omega_{\mathrm{hyp}}\cup\Omega_{\mathrm{par}}} & 0\\
                                                                                                                                              0 &\chi_{\Omega_{\mathrm{hyp}}}
                                                                                                                                              \end{pmatrix}$ as well as $M_1=\begin{pmatrix}
                                                                                                                                                                               \chi_{\Omega_{\mathrm{ell}}} & 0 \\
                                                                                                                                                                               0 & \chi_{\Omega_{\mathrm{par}}\cup\Omega_{\mathrm{ell}}}
                                                                                                                                                                               \end{pmatrix}$. The resulting evolutionary problem then is of mixed type. More precisely, on $\Omega_{\mathrm{ell}}$ we get an equation of elliptic type, on $\Omega_{\mathrm{par}}$ the equations becomes parabolic while on $\Omega_{\mathrm{hyp}}$ the problem is hyperbolic.\end{exam}
                                                                                                                                                                               
\begin{rem}
  The interested reader might wonder that there is not imposed any transmission condition on the unknown quantities along the interfaces of $\Omega_{\mathrm{ell}},\Omega_{\mathrm{par}}$ and $\Omega_{\mathrm{hyp}}$. However, this can be implemented automatically by being in the domain of the corresponding operator sum, as can be seen, for instance, in \cite[Remark 3.2]{W_StH16}, see also \cite[An illustrative Example]{PTWW13}. Another example of a mixeed tyoe problem in contral theory can be found in \cite[Remark 6.2]{PTW_IMA16}
\end{rem}
                                                                                                                                                                               
In \cite{Picard}, the well-posedness of problems of the form \eqref{eq:evo} has been addressed. In fact, it was shown that these probolems also cover the classical Maxwell's equations, the equations of linearized elasticity or a general class of coupled phenomena, see, for instance, \cite{MPTW16,MPTW15,PTWZ15}.  All these problems are indeed well-posed (see Section 2 for the precise statement). The purpose of the present article is to provide numerical methods for such problems. In this article, for the applications to follow, we will focus, however, on problems of mixed type of the form sketched in Example \ref{ex:CTS}. Moreover, as the spatial discretisation has to be developed for each problem separately, anyway, in this work, we will put an emphasize on the time-discretisation. Furthermore, we want to stress that the null-space of $M_0$ in \eqref{eq:evo} might be infinite-dimensional. Hence, we seek to develop a numerical scheme, which in particular allows for the treatment of a certain class of (partial) differential-\emph{algebraic} equations. 

  For the numerical treatment of the time derivatives we use a discontinuous Galer\-kin (dG) method,
  see also Section~\ref{s:sd}.
  The first dG-method was published in 1973 on neutron transport \cite{RH73}. 
  Later the methodology was developed further for classical hyperbolic, parabolic 
  and elliptic problems, see also the survey article \cite{Cockburn2000} and the 
  book \cite{Riviere08}. Note that there is a strong connection between dG-methods 
  and Runge-Kutta (collocation) methods, see \cite{AkrivisMakridakisNochetto2011} 
  for parabolic problems.

In Section \ref{s:ep}, for convenience, we will recall some essentials for evolutionary equations. In particular, we recall the solution theory of problems of the type of equation \eqref{eq:evo}. We will introduce a semi-dicretised version, Equation \eqref{eq:discr_quad_form}, of equation \eqref{eq:evo} at the beginning of Section \ref{s:sd}. We will also provide a solution theory for this semi-discretised variant with general underlying (spatial) Hilbert space $H$ (Proposition \ref{prop:solvability}). The remainder of Section \ref{s:sd} is devoted to estimate difference of the exact solution of \eqref{eq:evo} and the approximate solution of \eqref{eq:discr_quad_form}: In Subsection \ref{su:err1}, we bound the error by solely in terms of the interpolation error, which will eventually be estimated in Subsection \ref{su:ip}. As our prime example, we address the full space-time discretisation of Example \ref{ex:CTS} and derive corresponding error estimates. We verify our theoretical findings in Section \ref{s:ne} by means of a $1+1$- and a $1+2$-dimensional numerical example. This article is attached an appendix (Section \ref{s:app}), where, for the convenience of the reader, we recall some well-known results on the Gau\ss--Radau quadrature rule including the fact that the choice of Gau\ss--Radau points depends continuously on the weighting function. We will need some implications of the fact just mentioned in our a-priori analysis in Subsection \ref{su:err1}.

   \section{The setting of evolutionary problems}\label{s:ep}
   
   In this section we briefly recall the well-posedness result stated in \cite{Picard}. For doing so, we need to specify the functional analytic setting. Throughout, let $H$ be a real Hilbert space.
   
   \begin{deff}
    Let $\rho> 0$ and define the space 
    \[
     H_\rho(\mathbb{R};H)\coloneqq \{ f:\mathbb{R}\to H\,:\, f \mbox{ meas.}, \intop_\mathbb{R} |f(t)|_H^2 \exp(-2\rho t) \dd t<\infty\},
    \]
where we as usual identify functions which are equal almost everywhere. The space $H_\rho(\mathbb{R};H)$ is a Hilbert space endowed with the natural inner product given by 
\[
 \langle f,g\rangle_\rho \coloneqq \intop_{\mathbb{R}} \langle f(t),g(t)\rangle_H \exp(-2\rho t) \dd t \quad(f,g\in H_\rho(\mathbb{R};H)).
\]
Moreover, we define $\partial_{t}$ to be the closure of the operator
\[
 \partial_t :C_c^\infty(\mathbb{R};H)\subseteq H_\rho(\mathbb{R};H)\to H_\rho(\mathbb{R};H): \phi\mapsto \phi',
\]
where by $C_c^\infty(\mathbb{R};H)$ we denote the space of infinitely differentiable $H$-valued functions on $\mathbb{R}$ with compact support. We denote the domain of $\partial^k_t$ by $H_\rho^k(\mathbb{R};H)$ for $k\in \mathbb{N}$.
\end{deff}

Within the setting introduced, we can formulate the well-posedness for evolutionary equations of the form \eqref{eq:evo}.

\begin{thm}[{{\cite[Solution Theory]{Picard}}}]\label{thm:Pic}
Let $M_0,M_1:H\to H$ be bounded linear operators, $M_0$ selfadjoint and $A:D(A)\subseteq H\to H$ skew-selfadjoint. Moreover, assume that there is some $\rho_0>0$ such that
\[
 \exists \gamma>0 \forall \rho\geq \rho_0,x\in H: \langle (\rho M_0 +M_1) x,x\rangle_H\geq \gamma\langle x,x\rangle_H.
\]
Then, for each $\rho\geq \rho_0$ and each $F\in H_\rho(\mathbb{R};H)$ there exists a unique $U\in H_\rho(\mathbb{R};H)$ such that
\begin{equation}\label{eq:evo_1}
 \overline{(\partial_t M_0+M_1+A)}U=F,
\end{equation}
where the closure is taken in $H_\rho(\mathbb{R};H)$. Moreover, the following continuity estimate holds
\[
 |U|_\rho \leq \frac{1}{\gamma} |F|_\rho.
\]
If $F\in H_\rho^k(\mathbb{R};H)$ for $k\in \mathbb{N}$, then so is $U$ and we can omit the closure bar in \eqref{eq:evo_1}.
\end{thm}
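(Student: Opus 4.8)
The plan is to realise the closed operator $B\coloneqq\overline{(\partial_t M_0+M_1+A)}$ on $H_\rho(\mathbb{R};H)$ as a boundedly invertible operator by means of a two-sided coercivity estimate, everything resting on the mapping properties of $\partial_t$ in the weighted space. First I would record that for $\rho>0$ the operator $\partial_t$ is boundedly invertible with $|\partial_t^{-1}|\le 1/\rho$ and that its symmetric part equals $\rho$, i.e.\ $\partial_t^\ast=-\partial_t+2\rho$. Both facts follow from unitary equivalence: the multiplication $f\mapsto \e^{-\rho\cdot}f$ maps $H_\rho(\mathbb{R};H)$ isometrically onto $L^2(\mathbb{R};H)$ and conjugates $\partial_t$ into $\partial_t+\rho$; on $L^2(\mathbb{R};H)$ the derivative is skew-selfadjoint and, via the Fourier transform, becomes multiplication by $\i\xi$, so that $\partial_t+\rho$ corresponds to multiplication by $\i\xi+\rho$, whose modulus is bounded below by $\rho$. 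Transporting these statements back gives the claimed invertibility bound and the identity for the adjoint.

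The core step is then a computation of the numerical range. For $U$ in the dense core $C_c^\infty(\mathbb{R};D(A))$ I would expand
\[
 \langle(\partial_t M_0+M_1+A)U,U\rangle_\rho=\langle\partial_t M_0U,U\rangle_\rho+\langle M_1U,U\rangle_\rho+\langle AU,U\rangle_\rho.
\]
Since $A$ is skew-selfadjoint and $H$ is real, the last summand vanishes pointwise in $t$. As $M_0$ is selfadjoint and time-independent it commutes with $\partial_t$, so $\langle\partial_t M_0U,U\rangle_\rho=\tfrac12\int_\mathbb{R}\frac{\dd}{\dd t}\langle M_0U,U\rangle_H\,\e^{-2\rho t}\dd t$; integrating by parts (the boundary terms vanish for compactly supported $U$) turns this into $\rho\langle M_0U,U\rangle_\rho$. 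Hence
\[
 \langle(\partial_t M_0+M_1+A)U,U\rangle_\rho=\int_\mathbb{R}\langle(\rho M_0+M_1)U(t),U(t)\rangle_H\,\e^{-2\rho t}\dd t\ge\gamma|U|_\rho^2
\]
by the hypothesis applied pointwise. Cauchy--Schwarz then yields $\gamma|U|_\rho\le|(\partial_t M_0+M_1+A)U|_\rho$, and passing to the closure shows that $B$ is injective with closed range and satisfies $|U|_\rho\le\gamma^{-1}|BU|_\rho$; this already gives uniqueness and the continuity estimate.

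For existence it remains to prove surjectivity, for which I would show that the range of $B$ is dense, i.e.\ $\ker B^\ast=\{0\}$. The formal adjoint of $\partial_t M_0+M_1+A$ is $-\partial_t M_0+2\rho M_0+M_1^\ast-A$, using $\partial_t^\ast=-\partial_t+2\rho$, $M_0^\ast=M_0$ and $A^\ast=-A$. Repeating the computation above for this operator, the two contributions $-\rho\langle M_0V,V\rangle_\rho$ and $2\rho\langle M_0V,V\rangle_\rho$ combine to $+\rho\langle M_0V,V\rangle_\rho$, and using $\langle M_1^\ast V,V\rangle_\rho=\langle M_1V,V\rangle_\rho$ (again by reality) one arrives at the identical lower bound $\ge\gamma|V|_\rho^2$. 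Thus the adjoint is bounded below and therefore injective, so $\ker B^\ast=\{0\}$; combined with the closed range this forces $\operatorname{ran}B=H_\rho(\mathbb{R};H)$, and $B^{-1}$ is bounded by $1/\gamma$.

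Finally, for the regularity statement I would exploit that $\partial_t$ commutes with each of the time-independent operators $M_0$, $M_1$, $A$, hence with $B$ and consequently with $B^{-1}$; therefore $B^{-1}$ maps $H_\rho^k(\mathbb{R};H)=D(\partial_t^k)$ into itself, so $U=B^{-1}F\in H_\rho^k(\mathbb{R};H)$ whenever $F\in H_\rho^k(\mathbb{R};H)$, and rearranging $BU=F$ shows $AU=F-\partial_t M_0U-M_1U\in H_\rho(\mathbb{R};H)$, so that $U$ lies in the domain of the un-closed operator and the bar may be dropped. The step I expect to be the main obstacle is the rigorous identification of $B^\ast$ with the closure of the formal adjoint, together with the justification of the integration by parts: one must verify that $C_c^\infty(\mathbb{R};D(A))$ is genuinely a core for $B$, that no boundary contributions survive, and that an element of $\ker B^\ast$ possesses enough regularity (a $\partial_t$-derivative, via the weak equation) for the adjoint coercivity estimate to apply — which is where an approximation/mollification argument and the precise description of $D(\partial_t)$ in the weighted space enter.
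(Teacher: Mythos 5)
The paper does not prove this theorem at all: it is quoted verbatim from \cite{Picard} (and its non-autonomous/nonlinear relatives from \cite{PTWW13,T12,TW14}), so there is no in-paper argument to compare against. Measured against the literature proof, your outline is correct and is essentially the ``time-domain'' version of the standard argument: unitary equivalence of $H_\rho(\mathbb{R};H)$ with $L^2(\mathbb{R};H)$, the identity $\partial_t^\ast=-\partial_t+2\rho$, coercivity $\langle BU,U\rangle_\rho\geq\gamma|U|^2_\rho$ of the operator \emph{and} of its adjoint on suitable cores, hence injectivity, closed range and dense range, and regularity by commutation of $B^{-1}$ with $\partial_t$ (the last step is exactly how the paper itself argues in Remark \ref{rem:reg}(c)). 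Picard's original route is slightly different in flavour: he passes through the Fourier--Laplace transform, under which $\partial_t$ becomes multiplication by $\mathrm{i}\xi+\rho$, and solves the resulting family of \emph{static} problems $\big((\mathrm{i}\xi+\rho)M_0+M_1+A\big)u=f$ uniformly in $\xi$ by a Lax--Milgram-type argument; this buys a cleaner treatment of the adjoint (it is computed pointwise in frequency) at the cost of setting up the transform machinery, whereas your direct approach is more elementary but concentrates all the difficulty in the point you correctly flag: identifying $D(B^\ast)$ well enough that the adjoint coercivity applies to every element of $\ker B^\ast$, and verifying that $C_c^\infty(\mathbb{R};D(A))$ is a core. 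Both issues are genuinely the crux and are resolved in the cited references by mollification in time (which commutes with the purely spatial operator $A$) together with the boundedness of $\partial_t^{-1}$; since you name them explicitly rather than glossing over them, I would count your proposal as a correct sketch rather than as containing a gap.
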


\begin{rem}\label{rem:reg} $\,$
\begin{enumerate}[(a)]
             \item Note that the positive definiteness condition in the latter theorem especially implies $\langle M_0 x,x\rangle_H \geq 0$ for each $x\in H$.
             \item We remark that $H_\rho^1(\mathbb{R};H)\hookrightarrow C_\rho(\mathbb{R};H)$ by a variant of the Sobolev embedding theorem \cite[Lemma 3.1.59]{PM} or \cite[Lemma 5.2]{KPSTW14}. Here, 
             \[
              C_\rho(\mathbb{R};H)\coloneqq\{ f:\mathbb{R}\to H\,:\,f \mbox{ cont., } \sup_{t\in \mathbb{R}} |f(t)|\exp(-\rho t)<\infty\}.
             \]
            \item If $F\in H_\rho^1(\mathbb{R};H)$ then $U\in H_\rho^1(\mathbb{R};H)$ and hence 
            \[
             AU=F-\partial_tM_0U-M_1U\in H_\rho(\mathbb{R};H),
            \]
            which yields that $U(t)\in D(A)$ for almost every $t\in \mathbb{R}$. If even $F,U\in H_\rho^2 (\mathbb{R};H)$ the latter gives $AU\in H_\rho^1(\mathbb{R};H)$ and hence, using the Sobolev embedding result (see part (b)), $U\in C_\rho(\mathbb{R};D(A))$. 
            \item The original result in \cite{Picard} treat a general class of time-translation invariant coefficients. We refer to \cite{PTWW13,W15} for non-autonomous variants as well as to \cite{T12,TW14} for non-autonomous and/or non-linear versions of Theorem \ref{thm:Pic}.
            \end{enumerate}
\end{rem}

We note that the equations treated in Example \ref{ex:illus} and Example \ref{ex:CTS} satisfy the conditions of the previous theorem and hence, are well-posed.

\section{Semi-discretisation in time}\label{s:sd}

In this section, we discretise \eqref{eq:evo} with respect to time and do the a-priori analysis. We assume that $A,M_0,M_1$ satisfy the assumptions of Theorem \ref{thm:Pic}. Let $\rho\geq \rho_0$ and fix $T>0$ and consider the time interval $[0,T]$ instead of the whole real line. 
We partition the time-interval $[0,T]$ into subintervals $I_m=(t_{m-1},t_m]$ of length $\tau_m$ for $m\in\{1,2,\ldots,M\}$ with $t_0=0$ and $t_M=T$. Let $q\in \mathbb{N}$. We define the space 
\[
 \mathcal{U}^\tau\coloneqq \{u\in H_\rho(\mathbb{R};H)\,:\,\forall m\in \{1,\ldots,M\}: u|_{I_m}\in \mathcal P_q(I_m;H)\},
\]
where we denote by 
\[
\mathcal P_q(I_m;H)\coloneqq \lin \{ I_m \ni t\mapsto t^k \zeta \in H; k\in \{0,\ldots,q\}, \zeta\in H\}
\]
 the space of $H$-valued polynomials of degree at most $q$ defined on $I_m$. We endow $\mathcal P_q(I_m;H)$ with the scalar product 
 \[
     \langle p,q\rangle_{\rho,m} \coloneqq \intop_{t_{m-1}}^{t_m} \langle p(t),q(t)\rangle_H \exp(-2\rho(t-t_{m-1})) \dd t
    \]
turning the space $\mathcal P_q(I_m;H)$ into a Hilbert space.

The time integrals have to be evaluated numerically. We choose on each time interval $I_m$ a right-sided weighted Gau\ss--Radau quadrature formula.
   To this end, denote by ${\omega}_i^m$ and $\hat{t}^m_i$, $i\in\{0,\dots,q\}$,
   the weights and nodes of the weighted Gau\ss--Radau formula with $q+1$ nodes
   on the reference time interval $\widehat{I}=(-1,1]$, such that
   \[
      \int_{\widehat I} \e^{-\rho \tau_m (t+1)}p(t)\dt = \sum_{i=0}^q {\omega}^m_ip(\hat{t}^m_i)
   \]
   holds for all polynomials $p$ of degree at most $2q$.
   Note that the weights and nodes can always be numerically computed
   as shown for instance in \cite[Chapter 4.6]{PTVF07}, see also the appendix (Section \ref{s:app}) for some basic facts on the Gau\ss--Radau quadrature.
   With the following standard transformation
   \[
      T_m : \widehat{I}\to I_m,\; \hat{t}\mapsto \frac{t_{m-1}+t_m}{2} + \frac{\tau_m}{2} \hat{t},
   \]
   we define by
   \[
      \Qm{v} := \frac{\tau_m}{2} \sum_{i=0}^q {\omega}^m_i v(\tmi)
   \]
   with the transformed Gau\ss--Radau points $\tmi := T_m(\hat{t}^m_i)$, $i=\{0,\dots,q\}$,
   a quadrature formula on $I_m$.
   Note that
   \[
       \Qm{p} = \int_{I_m}p(t)\e^{-2\rho(t-t_{m-1})}\dt
   \]
   for all polynomials of degree at most $2q$.

   Using
   \[
       \Qmr{a,b}:=\Qm{\scp{a,b}}
   \]
   instead of the scalar products $\langle a,b\rangle_{\rho}$ we employ the following discrete \textbf{quadrature formulation}:

   For given $F\in \U^\tau$ and $x_0\in H$, find $U\in\U^\tau$, such that for all $\Phi\in \U^\tau$ and $m\in\{1,2,\dots,M\}$ it holds
   \begin{gather}\label{eq:discr_quad_form}
     \Qmr{(\partial_t M_0+M_1+A)U,\Phi}
    +\scp{M_0 \jump{U}_{m-1}^{x_0},{\Phi}^+_{m-1}}
    =
    \Qmr{ F,\Phi }.
   \end{gather}
Here, we denote by 
\[
 \jump{U}_{m-1}^{x_0}\coloneqq\begin{cases}
                               U(t_{m-1}+)-U(t_{m-1}-),& m\in\{2,\ldots,M\}
                               \\ U(t_0+)-x_0,& m=1,
                              \end{cases} 
\]
and by $\Phi^+_{m-1}\coloneqq \Phi(t_{m-1}+)$.

   \begin{prop}\label{prop:solvability}
      Let $F\in\U^\tau,x_0\in H$. Then there exists a unique solution of \eqref{eq:discr_quad_form}.
   \end{prop}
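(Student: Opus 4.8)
The plan is to exploit the one-sided (upwind) nature of the jump coupling in \eqref{eq:discr_quad_form}: the constraints are indexed by $m$, and the $m$-th one involves $U$ only through $U|_{I_m}$ and the incoming trace $U(t_{m-1}-)$ entering via $\jump{U}_{m-1}^{x_0}$ (with $x_0$ playing this role when $m=1$). Hence \eqref{eq:discr_quad_form} decouples into $M$ successive local problems, which I would solve one after another for $m=1,2,\dots,M$. It thus suffices to prove that, for each $m$ and each prescribed incoming datum, there is a unique $U|_{I_m}\in\mathcal P_q(I_m;D(A))$ -- membership in $D(A)$ at the quadrature nodes, hence everywhere, is forced for $\Qmr{AU,\Phi}$ to be meaningful -- satisfying the $m$-th equation for all $\Phi\in\mathcal P_q(I_m;H)$. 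Moving the known data and $F$ to the right, the local problem reads $b_m(U,\Phi)=\ell_m(\Phi)$ with the bilinear form $b_m(U,\Phi):=\Qmr{(\partial_t M_0+M_1+A)U,\Phi}+\scp{M_0U^+_{m-1},\Phi^+_{m-1}}$ and a bounded functional $\ell_m$.

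The decisive structural point I would exploit is that the unbounded operator $A$ enters $b_m$ only through its skew-selfadjoint part. For polynomials $U,\Phi$ of degree at most $q$ the map $t\mapsto\scp{AU(t),\Phi(t)}$ is a polynomial of degree at most $2q$, so the Gau\ss--Radau rule is exact and $\Qmr{AU,\Phi}=\int_{I_m}\scp{AU,\Phi}\e^{-2\rho(t-t_{m-1})}\dt$. The pointwise application of $A$ thus defines a skew-selfadjoint operator on $\mathcal P_q(I_m;H)$ with domain $\mathcal P_q(I_m;D(A))$, and in particular $\Qmr{AU,U}=0$. I would therefore write $b_m(U,\Phi)=\scprm{(\mathcal M+\mathcal A)U,\Phi}$, where $\mathcal A$ collects this contribution together with the (bounded) skew parts of $M_1$ and of the first-order term -- so that $\mathcal A$ is skew-selfadjoint -- while $\mathcal M$ is bounded and selfadjoint.

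Coercivity of $\mathcal M$ I would obtain by inserting $\Phi=U$. The term $\Qmr{AU,U}$ drops out, and exactness of the quadrature on degree-$2q$ polynomials permits an integration by parts of the first-order term: since $M_0$ is selfadjoint and time-independent, $\scp{\partial_t M_0U,U}=\half\partial_t\scp{M_0U,U}$, whence
\[ \Qmr{\partial_t M_0U,U}=\half\e^{-2\rho\tau_m}\scp{M_0U(t_m),U(t_m)}-\half\scp{M_0U^+_{m-1},U^+_{m-1}}+\rho\Qmr{M_0U,U}. \]
The negative boundary contribution is halved by the jump penalty $\scp{M_0U^+_{m-1},U^+_{m-1}}$, and both surviving boundary terms are non-negative because $M_0\ge0$ (Remark \ref{rem:reg}(a)). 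What remains is $\Qmr{(\rho M_0+M_1)U,U}$, which by exactness equals $\int_{I_m}\scp{(\rho M_0+M_1)U,U}\e^{-2\rho(t-t_{m-1})}\dt\ge\gamma\norm{U}{\rho,m}^2$ by the positive-definiteness hypothesis of Theorem \ref{thm:Pic}. Altogether $\scprm{\mathcal MU,U}=b_m(U,U)\ge\gamma\norm{U}{\rho,m}^2$, so $\mathcal M=\mathcal M^\ast$ is bounded below by $\gamma$.

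Finally, $T_m:=\mathcal M+\mathcal A$ is closed (a bounded perturbation of the closed operator $\mathcal A$) and densely defined, with adjoint $T_m^\ast=\mathcal M-\mathcal A$. From $\Re\scprm{T_mU,U}=\Re\scprm{T_m^\ast U,U}=\scprm{\mathcal MU,U}\ge\gamma\norm{U}{\rho,m}^2$ I obtain $\norm{T_mU}{\rho,m}\ge\gamma\norm{U}{\rho,m}$ and the same bound for $T_m^\ast$; hence $T_m$ is injective with closed range, and $T_m^\ast$ is injective, so $T_m$ has dense range. Consequently $T_m$ is bijective with $\|T_m^{-1}\|\le1/\gamma$, which yields the unique local solution, and solving successively for $m=1,\dots,M$ produces the unique $U\in\U^\tau$. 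I expect the only genuine difficulty to be the treatment of the unbounded $A$; the point made above is that, by skew-selfadjointness together with quadrature exactness, $A$ is confined to the skew part $\mathcal A$, so it neither disturbs the coercivity of $\mathcal M$ nor the bijectivity of $\mathcal M+\mathcal A$, while the integration-by-parts bookkeeping with the exponential weight and the point evaluations is routine.
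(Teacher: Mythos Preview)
Your proposal is correct and follows essentially the same route as the paper: reduce to a local problem on each $I_m$ by induction, represent the bilinear form $b_m$ via an operator $T_m$ on $\mathcal P_q(I_m;H)$ (the paper does this by explicitly constructing the Riesz representative $\Psi$ of the point-evaluation $\delta_{m-1}$, you do it implicitly), establish $\langle T_mU,U\rangle_{\rho,m}\geq\gamma\|U\|_{\rho,m}^2$ by the same integration-by-parts computation combined with $\langle AU,U\rangle=0$, observe that the adjoint satisfies the same bound, and conclude bijectivity. Your symmetric/skew split $\mathcal M+\mathcal A$ versus the paper's bounded/skew split $B+A$ is a cosmetic reorganisation, and the $\Re$ in your final step is superfluous since $H$ is real, but neither point affects the argument.
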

   
   \begin{proof}
    Let $m\in\{1,\ldots,M\}$ and recall that $\mathcal{P}_q(I_m;H)$ is a Hilbert space with the aforementioned scalar product.
We note that 
    \[
\partial_t:\mathcal{P}_q(I_m;H)\to \mathcal{P}_q(I_m;H):\,p\mapsto p'     
    \]
 and 
 \[
 \delta_{m-1}: \mathcal{P}_q(I_m;H)\to \mathbb{R}:\, p\mapsto p(t_{m-1}+)
\]
     are bounded linear operators. Consequently, the mapping
     \[
      \mathcal{P}_q(I_m;H)\to \mathbb{R} : p \mapsto \langle x,\delta_{m-1} p\rangle_H
     \]
is linear and bounded for each $x\in H$ and thus, by the Riesz representation theorem, there is a unique $\Psi(x)\in \mathcal{P}_q(I_m;H)$ such that 
\[
 \langle \Psi (x),p\rangle_{\rho,m}=\langle x,\delta_{m-1}p\rangle_H.
\]
Moreover, the mapping $\Psi: H \to \mathcal{P}_q(I_m;H)$ is linear and bounded, since 
\[
 |\Psi(x)|^2_{\rho,m}=\langle \Psi(x),\Psi(x)\rangle_{\rho,m}=\langle x,\delta_{m-1}\Psi(x)\rangle_H\leq |x|_H\|\delta_{m-1}\||\Psi(x)|_{\rho,m}\quad(x\in H).
\]
We now prove that for each $g\in \mathcal{P}_q(I_m;H)$ there is a unique $u\in \mathcal{P}_q(I_m;D(A))$ such that 
\[
 (\partial_t M_0 + M_1 +A)u+\Psi M_0 \delta_{m-1} u=g.
\]
For doing so, we first compute using integration by parts
\begin{align*}
 &\langle\partial_tM_0 v,v\rangle_{\rho,m} \\
 &= \frac{1}{2} \langle \partial_t M_0 v,v\rangle_{\rho,m} + \frac{1}{2} \langle v,\partial_tM_0 v\rangle_{\rho,m} \\
 &= \frac{1}{2} \langle \partial_t M_0 v,v\rangle_{\rho,m} + \frac{1}{2} \intop_{t_{m-1}}^{t_m} \langle v(t),M_0 v'(t) \rangle_H \exp(-2\rho(t-t_{m-1})) \dd t\\ 
 &= \frac{1}{2} \langle \partial_t M_0 v,v\rangle_{\rho,m} - \frac{1}{2} \intop_{t_{m-1}}^{t_m} \langle M_0 v'(t), v(t) \rangle_H \exp(-2\rho(t-t_{m-1})) \dd t\\
 &\quad + \rho \intop_{t_{m-1}}^{t_m} \langle M_0v(t), v(t) \rangle_H \exp(-2\rho(t-t_{m-1})) \dd t
  +\frac{1}{2} \langle M_0 v(t_m),v(t_m)\rangle_H \exp(-2\rho \tau_m)\\
  &\quad - \frac{1}{2} \langle M_0v(t_{m-1}),v(t_{m-1})\rangle_H\\
 &\geq  \rho \langle M_0 v,v\rangle_{\rho,m}-\frac{1}{2}\langle \Psi M_0 \delta_{m-1} v,v\rangle_{\rho,m}  
\end{align*}
for each $v\in \mathcal{P}_q(I_m;H)$. Next, from $A^\ast=-A$ it follows $\langle Ax,x\rangle_H=0$ for each $x\in D(A)$. Therefore, for all $u\in \mathcal{P}_{q}(I_m;D(A))$ we get
\begin{align*}
 &\langle (\partial_t M_0 + M_1 +A)u+\Psi M_0 \delta_{m-1} u,u\rangle_{\rho,m}\\&=\langle \partial_tM_0 u,u\rangle_{\rho,m}+\langle M_1 u,u\rangle_{\rho,m}+\langle \Psi M_0 \delta_{m-1}u,u\rangle_{\rho,m}\\
                                                                             &\geq \langle (\rho M_0 +M_1)u,u\rangle_{\rho,m}+\frac{1}{2}\langle \Psi M_0 \delta_{m-1}u,u\rangle_{\rho,m}\\
                                                                             &\geq \gamma\langle u,u\rangle_{\rho,m},
 \end{align*}
 where we have used 
 \[
  \langle \Psi M_0 \delta_{m-1}u,u\rangle_{\rho,m}=\langle M_0 u(t_{m-1}+),u(t_{m-1})\rangle_H \geq 0.
 \]
In particular, both $B\coloneqq(\partial_t M_0 + M_1 )+\Psi M_0 \delta_{m-1}$ and $B+A$ are strictly positive definite. Moreover, since $B$ is bounded, $B^*$ is strictly positive definite, as well. Hence, from 
\[
 (B+A)^\ast=B^\ast-A
\]
we read off that $(B+A)^*$ is strictly positive definite as well. 
Thus, for each $g\in \mathcal{P}_q(I_m;H)$ there is a unique $u\in \mathcal{P}_q(I_m;D(A))=D(A+B)$ such that 
\begin{equation}\label{eq:p31}
  (\partial_t M_0 + M_1 +A)u+\Psi M_0 \delta_{m-1} u=g
\end{equation}
Thus, we are in the position to define a solution for \eqref{eq:discr_quad_form} by induction on $m$. For this, we put $U(t_0-)\coloneqq x_0$. Next, assume we have solved \eqref{eq:discr_quad_form} for $U$ on $I_{m-1}$ for some $m\in\{1,\ldots,M\}$ ($I_0\coloneqq \{t_0\}$ and the equation is void). Then, let $u\in \mathcal{P}_{q}(I_m;D(A))$ be such that \eqref{eq:p31} holds for $g=F|_{I_m}-\Psi M_0 U(t_{m-1}-)$. We put $U|_{I_m}\coloneqq u$.  The thus defined function $U$ solves \eqref{eq:discr_quad_form}: We observe
\begin{multline*}
 \langle (\partial_t M_0+M_1+A)U,\Phi\rangle_{\rho,m}+ \langle \Psi M_0 \delta_{m-1} U,\Phi\rangle_{\rho,m} \\=\langle F-\Psi M_0 U(t_{m-1}-),\Phi\rangle_{\rho,m}=\langle F,\Phi\rangle_{\rho,m}+ \langle \Psi M_0 U(t_{m-1}-),\Phi\rangle_{\rho,m},
\end{multline*}
by definition for all $\Phi\in \U^\tau$ and $m\in\{1,\ldots,M\}$. The latter is the same as saying 
    \begin{multline*}
     \langle (\partial_t M_0+M_1+A)U,\Phi\rangle_{\rho,m}+ \langle M_0 U(t_{m-1}+),\Phi(t_{m-1}+)\rangle_H\\
     =\langle F,\Phi\rangle_{\rho,m}+ \langle M_0 U(t_{m-1}-),\Phi(t_{m-1}+)\rangle_H.
    \end{multline*}
But, since the quadrature is exact for polynomials up to degree $2q$, the latter equation in turn is equivalent to
\[ \Qmr{(\partial_t M_0+M_1+A)U,\Phi}
    +\scp{M_0 \jump{U}_{m-1}^{x_0},{\Phi}^+_{m-1}}
    =
    \Qmr{ F,\Phi },
    \]
which yields existence of $U$. Uniqueness follows from the uniqueness of $u$ satisfying \eqref{eq:p31}.
\end{proof}

\subsection{On some a-priori error estimates in time}\label{su:err1}

After having proved the unique solvability of \eqref{eq:discr_quad_form}, we address the error estimates in the following. In our analysis we will use the discretised norms
   \[
       \tnorm{v}^2_{Q,\rho,m}:=\Qmr{v,v}
       \quad\text{and}\quad
       \tnorm{v}^2_{Q,\rho}:=\sum_{m=1}^M\Qmr{v,v}\e^{-2\rho t_{m-1}}
   \]
   as approximations of $\tnorm{v}^2_{\rho,m}\coloneqq \intop_{I_m} |v(t)|_H^2 \exp(-2\rho(t-t_{m-1}))\dd t$ and $|v|^2_{\rho}$. Note that for $v\in\U^\tau$ the approximation is exact.

   Let us start by defining an interpolation operator into $\U^\tau$ and
   define by $\varphi_{m,i}$ with $i\in\{0,\dots,q\}$ the associated Lagrange basis functions
   to the nodes $\tmi$. Then we obtain for a function $v\in C([0,T],H)$ by
   \begin{equation}\label{eq:P}
         (Pv)(0) = v(0),\quad
         (Pv)\big|_{I_m}(t)
            = \sum_{i=0}^q v(\tmi)\varphi_{m,i}(t), \qquad m\in\{1,\dots,M\},
   \end{equation}
   an interpolation operator in time.
   
   In the analysis to follow, we will consider the problem \eqref{eq:evo_1}. In particular, we emphasize that we assume that 
   \begin{center}
the hypotheses of Theorem \ref{thm:Pic} are in effect.  \end{center}
 Furthermore, we fix a right-hand side \[F\in H_\rho^2(\mathbb{R};H).\]
 Thus, by Theorem \ref{thm:Pic} (and Remark \ref{rem:reg}(c)) there exists a unique solution \begin{equation}\label{e:op}                           
U\in H_\rho^2(\mathbb{R};H)\text{ with }   
    (\partial_t M_0 +M_1 +A)U=F.
   \end{equation}Also, by Remark \ref{rem:reg}(c), we obtain $F\in C_\rho(\mathbb{R};H)$ and $U\in C_\rho(\mathbb{R};D(A))$. Moreover, we set \begin{center}$U^\tau\in \U^\tau$ to satisfy \eqref{eq:discr_quad_form} for the right-hand side $PF\in \U^\tau$ and  $x_0\coloneqq U(0+)$.\end{center}
   We consider the following splitting
   \[
       U^\tau-U=\xi-\eta,\quad\text{where}\quad
       \xi  = U^\tau-P U\in\U^\tau \quad\text{and}\quad
       \eta =  U - PU.
   \]
   Note that for almost every $t\in [0,T]$ we have that
   \[
    (\partial_t M_0 +M_1+A)U(t)=F(t)
   \]
 and thus, 
 \[
  \langle(\partial_t M_0 +M_1+A)U(t), \Phi(t)\rangle_H=\langle F(t), \Phi (t)\rangle_H
 \]
for each $\Phi\in \U^\tau$ and almost every $t\in[0,T]$, which gives
\[
  \Qmr{(\partial_t M_0+M_1+A)U,\Phi}
    +\scp{M_0 \jump{U}_{m-1}^{x_0},{\Phi}^+_{m-1}}
    =
    \Qmr{ F,\Phi }=\Qmr{ PF,\Phi },
\]
where we have used $M_0 \jump{U}_{m-1}^{x_0}=M_0 \jump{U}_{m-1}^{U(0+)}=0$, due to the continuity of $U$ and $\Qmr{ F,\Phi }=\Qmr{ PF,\Phi }$, since the $PF$ is interpolates at the Gau\ss--Radau points used in the quadrature. Hence, $U$ solves the same semi-discretised problem as $U^\tau$.   Thus, we obtain with $\chi\in\U^\tau$ as test function the \textbf{error equation}
   \begin{multline}\label{eq:error2_}
     \Qmr{(\partial_t M_0+M_1+A)\xi,\chi}
    +\scp{M_0 \jump{\xi}_{m-1}^0,\chi^+_{m-1}}\\
    =
     \Qmr{(\partial_t M_0+M_1+A)\eta,\chi}
    +\scp{M_0 \jump{\eta}_{m-1}^0,\chi^+_{m-1}}.
   \end{multline}
   For the special case $\chi=\xi$ (use $A=-A^*$) we obtain
   \begin{multline}\label{eq:error2}
     E_\textnormal{d}^m
     :=\Qmr{(\partial_t M_0+M_1)\xi,\xi}
    +\scp{M_0 \jump{\xi}_{m-1}^0,\xi^+_{m-1}}\\
    =
     \Qmr{(\partial_t M_0+M_1+A)\eta,\xi}
    +\scp{M_0 \jump{\eta}_{m-1}^0,\xi^+_{m-1}}
    =: E_\textnormal{i}^m
   \end{multline}
 for all $m\in\{1,\ldots,M\}$, where the subscripts d and i should remind of discretisation and interpolation, respectively.
   \begin{lem}\label{lem:lhs2} For all $m\in\{1,\ldots,M\}$, we have
       \[
           E_\textnormal{d}^m \geq \frac{1}{2}\left[
                                \scp{M_0\xi^-_m,\xi^-_m}\e^{-2\rho\tau_m}
                               -\scp{M_0\xi^-_{m-1},\xi^-_{m-1}}
                               +\scp{M_0\jump{\xi}_{m-1}^0,\jump{\xi}_{m-1}^0}
                            \right]
                            +\gamma\tnorm{\xi}_{Q,\rho,m}^2,
       \]
       where $\xi^-_m\coloneqq \xi(t_{m}-)$ and $\xi^-_0\coloneqq 0$.
   \end{lem}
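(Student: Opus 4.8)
The plan is to exploit that $\xi\in\U^\tau$ restricts on each $I_m$ to a polynomial of degree at most $q$, so that every integrand occurring in $E_\textnormal{d}^m$ is a polynomial of degree at most $2q$. Hence the Gau\ss--Radau quadrature is exact and I may replace $\Qmr{\cdot,\cdot}$ by $\scprm{\cdot,\cdot}$ throughout; in particular $\tnorm{\xi}^2_{Q,\rho,m}=\tnorm{\xi}^2_{\rho,m}$. The strategy is to keep every boundary contribution as an exact equality and to invoke the coercivity hypothesis of Theorem~\ref{thm:Pic} only at the very end.

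First I would reuse the integration-by-parts computation carried out in the proof of Proposition~\ref{prop:solvability}. Applied to $v=\xi|_{I_m}$ it in fact yields the \emph{exact} identity
\[
\scprm{\partial_t M_0\xi,\xi}
=\rho\scprm{M_0\xi,\xi}
+\tfrac12\scp{M_0\xi^-_m,\xi^-_m}\e^{-2\rho\tau_m}
-\tfrac12\scp{M_0\xi^+_{m-1},\xi^+_{m-1}},
\]
where I have used $\xi|_{I_m}(t_m)=\xi^-_m$ and $\xi|_{I_m}(t_{m-1})=\xi^+_{m-1}$; note that, in contrast to the cited proof, here no boundary term is discarded and the term at $t_m$ is retained. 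Substituting this into the definition of $E_\textnormal{d}^m$ and collecting the two volume terms gives
\[
E_\textnormal{d}^m
=\scprm{(\rho M_0+M_1)\xi,\xi}
+\tfrac12\scp{M_0\xi^-_m,\xi^-_m}\e^{-2\rho\tau_m}
-\tfrac12\scp{M_0\xi^+_{m-1},\xi^+_{m-1}}
+\scp{M_0\jump{\xi}_{m-1}^0,\xi^+_{m-1}}.
\]

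Next I would estimate the volume term from below. Applying the positive-definiteness assumption of Theorem~\ref{thm:Pic} pointwise to $\xi(t)$ and integrating against the weight $\e^{-2\rho(t-t_{m-1})}$ over $I_m$ yields $\scprm{(\rho M_0+M_1)\xi,\xi}\ge\gamma\tnorm{\xi}^2_{\rho,m}=\gamma\tnorm{\xi}^2_{Q,\rho,m}$; this is the single inequality in the whole argument. It then remains to rearrange the boundary contributions. Writing $\jump{\xi}_{m-1}^0=\xi^+_{m-1}-\xi^-_{m-1}$ (with the convention $\xi^-_0=0$ when $m=1$) and using the self-adjointness of $M_0$, I would verify the purely algebraic ``completion of the square''
\[
-\tfrac12\scp{M_0\xi^+_{m-1},\xi^+_{m-1}}
+\scp{M_0\jump{\xi}_{m-1}^0,\xi^+_{m-1}}
=\tfrac12\scp{M_0\jump{\xi}_{m-1}^0,\jump{\xi}_{m-1}^0}
-\tfrac12\scp{M_0\xi^-_{m-1},\xi^-_{m-1}},
\]
which, combined with the previous two displays, gives precisely the asserted estimate.

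The argument involves no serious analytic difficulty; the steps requiring the most care are the exact bookkeeping of the one-sided limits $\xi^\pm$ together with the degenerate case $m=1$, and the verification of the last algebraic identity. The latter is the crux of the lemma and, thanks to the symmetry of $M_0$, holds as an equality rather than merely an inequality, so that the only loss in the whole derivation is the coercivity step.
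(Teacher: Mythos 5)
Your proposal is correct and follows essentially the same route as the paper's proof: exactness of the Gau\ss--Radau rule on $\mathcal{P}_{2q}$, the weighted integration-by-parts identity for $\scprm{\partial_t M_0\xi,\xi}$ with both boundary terms retained, the algebraic identity rewriting the jump term (the paper states it directly as $\scp{M_0\jump{\xi}^0_{m-1},\xi^+_{m-1}}=\tfrac12[\scp{M_0\xi^+_{m-1},\xi^+_{m-1}}-\scp{M_0\xi^-_{m-1},\xi^-_{m-1}}+\scp{M_0\jump{\xi}^0_{m-1},\jump{\xi}^0_{m-1}}]$, which is equivalent to your rearrangement), and finally the coercivity of $\rho M_0+M_1$. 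No gaps.
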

   \begin{proof} Let $m\in\{1,\ldots,M\}$. Since $\xi$ is a (piece-wise) polynomial of order $q$ in time, we obtain
      \begin{align*}
          \Qmr{\partial_t M_0\xi,\xi}
            &=\scprm{\partial_t M_0\xi,\xi}\\
            &=\frac{1}{2}\int_{I_m}\e^{-2\rho(t-t_{m-1})}\partial_t\scp{M_0\xi,\xi}\dt\\
            &=\frac{1}{2}\left[
                           \scp{M_0\xi^-_m,\xi^-_m}\e^{-2\rho\tau_m}
                          -\scp{M_0\xi^+_{m-1},\xi^+_{m-1}}
                         \right]
              +\rho\scprm{M_0\xi,\xi}.
      \end{align*}
      Further, we compute
      \[
         \scp{M_0 \jump{\xi}^0_{m-1},\xi^+_{m-1}}
            = \frac{1}{2}\left[
                               \scp{M_0\xi^+_{m-1},\xi^+_{m-1}}
                              -\scp{M_0\xi^-_{m-1},\xi^-_{m-1}}
                              +\scp{M_0\jump{\xi}^0_{m-1},\jump{\xi}^0_{m-1}}
                           \right].
      \]
      Therefore, we have
      \begin{align*}
         E_\textnormal{d}^m&=\Qmr{(\partial_t M_0+M_1)\xi,\xi}
            +\scp{M_0 \jump{\xi}^0_{m-1},\xi^+_{m-1}}\\
            &= \frac{1}{2}\left[
                               \scp{M_0\xi^-_m,\xi^-_m}\e^{-2\rho\tau_m}
                              -\scp{M_0\xi^-_{m-1},\xi^-_{m-1}}
                              +\scp{M_0\jump{\xi}^0_{m-1},\jump{\xi}^0_{m-1}}
                           \right]\\&\quad
                           +\scprm{(\rho M_0+M_1)\xi,\xi}.
      \end{align*}
      Together with
      \[
          \scprm{(\rho M_0+M_1)\xi,\xi}
          \geq\gamma\tnorm{\xi}_{\rho,m}^2
          = \gamma\tnorm{\xi}_{Q,\rho,m}^2
      \]
      the lemma is proved.
   \end{proof}

   In order to analyse $E_\textnormal{i}^m$ we introduce another interpolation operator, that enables us to
   estimate the time derivative of the interpolation error with a higher order.
   This operator utilises $t_{m,-1}\coloneqq t_{m-1}$ in addition to
   $\tmi$, $i\in\{0,\dots,q\}$ as interpolation points. Denoting the associated Lagrange basis functions by $\psi_{m,i}$,
   $i\in\{-1,0,\dots,q\}$, this interpolation operator is given by
   \begin{equation}\label{eq:hatP}
      (\widehat{P} v)\big|_{I_m}(t)
      := \sum_{i=-1}^{q} v(\tmi)\psi_{m,i}(t),\qquad m\in\{1,\dots,M\}.
   \end{equation}
   Note the $\widehat{P}$ maps to functions that are continuous in time (recall that $t_{m,q}=t_m$) while the image of $P$ is allowed to be
   discontinuous at the time mesh points.

   \begin{lem}\label{lem:rhs2_1}
    For $m\in\{1,\ldots,M\}$ and $\psi\in \U^\tau$, we have
    \[
         \Qmr{\partial_t M_0\eta,\psi}
        +\scp{M_0\jump{\eta}^0_{m-1},\psi^+_{m-1}}
        = \Qmr{\partial_t M_0(U-\widehat{P} U),\psi} + R(U,\psi),
    \]
    where
    \[
        |R(U,\psi)|\leq C\alpha\tau_m|M_0\eta_{m-1}^+|_H^2
                 +\beta\tnorm{\psi}_{Q,\rho,m}^2
    \]
    for all $\alpha,\beta>0$ satisfying $\alpha\beta=1/4$ and with $C\ge0$ depending on $T$ (the finite time horizon) and $\rho$ only.
   \end{lem}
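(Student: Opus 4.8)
The plan is to subtract the claimed right-hand side from the left-hand side and to show that the resulting remainder $R(U,\psi)$ collapses, after one integration by parts, to a single weighted integral controlled by $|M_0\eta^+_{m-1}|_H$ and $\tnorm{\psi}_{Q,\rho,m}$. Set $w\coloneqq\widehat{P}U-PU$ on $I_m$, so that $\partial_t M_0\eta-\partial_t M_0(U-\widehat{P}U)=\partial_t M_0 w$. Since $w$ is a polynomial of degree at most $q+1$ and $\psi\in\U^\tau$ has degree at most $q$, the product $\scp{\partial_t M_0 w,\psi}$ has degree at most $2q$; hence the Gau\ss--Radau rule is exact on it and the (non-polynomial) contribution of $\partial_t M_0 U$ cancels inside the quadrature, leaving
\[
 R(U,\psi)=\scprm{\partial_t M_0 w,\psi}+\scp{M_0\jump{\eta}^0_{m-1},\psi^+_{m-1}}.
\]
I would then record that $\jump{\eta}^0_{m-1}=\eta^+_{m-1}$: on the preceding interval $I_{m-1}$ the right Gau\ss--Radau node coincides with its right endpoint $t_{m-1}$, so $P$ interpolates $U$ there from the left and $\eta^-_{m-1}=U(t_{m-1})-(PU)(t_{m-1}-)=0$; for $m=1$ the same holds by the convention $x_0=0$ encoded in the superscript.

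The key structural observation is the identity $w=\eta^+_{m-1}\,\psi_{m,-1}$. Indeed, both $\widehat{P}U$ and $PU$ interpolate $U$ at the $q+1$ nodes $\tmi$, $i\in\{0,\dots,q\}$, so $w$ vanishes there; as $w$ has degree at most $q+1$ it must be a scalar multiple of $\psi_{m,-1}$, and evaluating at $t_{m,-1}=t_{m-1}$ gives $w(t_{m-1})=U(t_{m-1})-(PU)(t_{m-1}+)=\eta^+_{m-1}$. Inserting $M_0 w=M_0\eta^+_{m-1}\,\psi_{m,-1}$ into $\scprm{\partial_t M_0 w,\psi}$ and integrating by parts produces the boundary contribution $\scp{M_0 w(t_m),\psi(t_m)}\e^{-2\rho\tau_m}-\scp{M_0\eta^+_{m-1},\psi^+_{m-1}}$, whose first summand vanishes because $\psi_{m,-1}(t_m)=0$ (so $w(t_m)=0$) and whose second summand cancels exactly against the jump term. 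This leaves the clean expression
\[
 R(U,\psi)=-\int_{I_m}\bigl(\scp{M_0\eta^+_{m-1},\psi'(t)}-2\rho\scp{M_0\eta^+_{m-1},\psi(t)}\bigr)\psi_{m,-1}(t)\,\e^{-2\rho(t-t_{m-1})}\dt.
\]

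To estimate, I would apply the Cauchy--Schwarz inequality in $H$ to factor out $|M_0\eta^+_{m-1}|_H$, and then the Cauchy--Schwarz inequality in the weighted $L^2(I_m)$ inner product, arriving at
\[
 |R(U,\psi)|\le |M_0\eta^+_{m-1}|_H\bigl(\|\partial_t\psi\|+2\rho\|\psi\|\bigr)\|\psi_{m,-1}\|,
\]
with all norms the weighted $L^2(I_m)$ norm. Transforming to the reference interval $\widehat{I}$ via $T_m$ gives $\|\psi_{m,-1}\|=\ord{\tau_m^{1/2}}$, while the polynomial inverse inequality controls $\|\partial_t\psi\|$ by $C\tau_m^{-1}\|\psi\|=C\tau_m^{-1}\tnorm{\psi}_{Q,\rho,m}$. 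Crucially, the constants produced on $\widehat{I}$ must be uniform in $m$: since $\rho\tau_m\in(0,\rho T]$, the appendix (continuous dependence of the Gau\ss--Radau nodes on the weight $\e^{-\rho\tau_m(t+1)}$) guarantees that the nodes, and hence the reference functions $\psi_{m,-1}$ and their derivatives, stay in a fixed bounded set, so $C$ depends only on $T$, $\rho$ (and $q$). Collecting the powers of $\tau_m$ yields $|R(U,\psi)|\le C\tau_m^{s}|M_0\eta^+_{m-1}|_H\tnorm{\psi}_{Q,\rho,m}$, and a weighted Young inequality $ab\le\alpha a^2+\beta b^2$ (valid for all $\alpha,\beta>0$ with $\alpha\beta=1/4$), applied with $a=|M_0\eta^+_{m-1}|_H$ and $b=\tnorm{\psi}_{Q,\rho,m}$, splits this into the two-term bound asserted in the lemma.

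The main obstacle is the term carrying $\partial_t\psi$: it is the only place where a derivative of the test function survives, and its treatment by the inverse inequality is exactly what fixes the power of $\tau_m$ in front of $|M_0\eta^+_{m-1}|_H^2$. I would double-check this power by also computing $R$ in its non-integrated form $\int_{I_m}\scp{M_0\eta^+_{m-1},\psi}\,\partial_t\psi_{m,-1}\,\e^{-2\rho(t-t_{m-1})}\dt+\scp{M_0\eta^+_{m-1},\psi^+_{m-1}}$, which trades $\partial_t\psi$ for the fixed derivative $\partial_t\psi_{m,-1}$ and a trace $\psi^+_{m-1}$; reconciling the two routes, and verifying that the interpolation, inverse, and trace constants are all absorbed into a single $C=C(T,\rho)$ uniformly over the partition, is the delicate bookkeeping on which the estimate hinges.
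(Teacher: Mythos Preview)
Your decomposition and the integration by parts are correct, and the identification $w=\widehat{P}U-PU=\eta^+_{m-1}\psi_{m,-1}$ together with the cancellation of the boundary term against the jump is exactly right. The gap is in the treatment of the surviving term $-\scprm{M_0 w,\partial_t\psi}$: this term is in fact \emph{zero}, and you miss this. Indeed, $\psi_{m,-1}\,\partial_t\psi$ is a polynomial of degree $(q+1)+(q-1)=2q$, so the Gau\ss--Radau rule is exact on the integrand $\scp{M_0\eta^+_{m-1},\partial_t\psi(\cdot)}\psi_{m,-1}(\cdot)$; but $\psi_{m,-1}$ vanishes at every node $\tmi$, so the quadrature sum is zero. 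Hence
\[
   R(U,\psi)=2\rho\scprm{M_0\eta^+_{m-1}\psi_{m,-1},\psi},
\]
and Cauchy--Schwarz with $\tnorm{\psi_{m,-1}}_{\rho,m}\leq C\tau_m^{1/2}$ (Corollary~\ref{c:chi2}) followed by Young immediately gives the stated bound with the correct factor $\tau_m$. This is precisely the paper's remainder; the paper arrives at it by swapping $PU$ for $\widehat{P}U$ inside $\Qmr{M_0\cdot,\partial_t\psi}$ via quadrature exactness, rather than producing the term and observing it vanishes.

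Without this observation your argument does not close. Bounding $-\scprm{M_0 w,\partial_t\psi}$ by the inverse inequality $\tnorm{\partial_t\psi}_{\rho,m}\leq C\tau_m^{-1}\tnorm{\psi}_{\rho,m}$ and $\tnorm{\psi_{m,-1}}_{\rho,m}\leq C\tau_m^{1/2}$ yields only $|R|\leq C\tau_m^{-1/2}|M_0\eta^+_{m-1}|_H\tnorm{\psi}_{Q,\rho,m}$, hence after Young a factor $\tau_m^{-1}$ (not $\tau_m$) in front of $|M_0\eta^+_{m-1}|_H^2$. Your alternative ``non-integrated'' route has the same defect: $\tnorm{\partial_t\psi_{m,-1}}_{\rho,m}\sim\tau_m^{-1/2}$ and the trace $|\psi^+_{m-1}|_H\lesssim\tau_m^{-1/2}\tnorm{\psi}_{\rho,m}$ both reproduce the bad power. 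So the ``delicate bookkeeping'' you flag cannot be resolved by estimation; the missing ingredient is the algebraic vanishing above.
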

   \begin{proof}
    With $U$ being continuous in time, we only have to consider the discrete part.
    Using the exactness of the quadrature rule for polynomials of degree $2q$, we obtain for $m\in\{1,\ldots,M\}$
    \begin{align*}
     &\Qmr{\partial_t M_0 P U,\psi}+ \underbrace{\scp{M_0\jump{P U}^{x_0}_{m-1},\psi^+_{m-1}}}_{=:a}\\
       &=\scprm{\partial_t M_0 P U,\psi}+a\\
       &=-\scprm{M_0 P U,\partial_t \psi}+2\rho\scprm{M_0 P U,\psi}+\underbrace{\scp{\e^{-2\rho(t-t_{m-1})}M_0 P U,\psi}\big|_{t_{m-1}}^{t_m}}_{=:b}+a\\
       &=-\Qmr{M_0 P U,\partial_t \psi}+2\rho\scprm{M_0 P U,\psi}+a+b\\
       &=-\Qmr{M_0 \widehat{P} U,\partial_t \psi}+2\rho\scprm{M_0 P U,\psi}+a+b\\
       &=-\scprm{M_0 \widehat{P} U,\partial_t \psi}+2\rho\scprm{M_0 P U,\psi}+a+b\\
       &=\scprm{\partial_t M_0 \widehat{P} U,\psi}+2\rho(\scprm{M_0 P U,\psi}-\scprm{M_0 \widehat{P} U,\psi})\\&\quad+a+b-\underbrace{\scp{\e^{-2\rho(t-t_{m-1})}M_0 \widehat{P} U,\psi}\big|_{t_{m-1}}^{t_m}}_{=:c}.
    \end{align*}
    Using
    $(P U)^-_{m-1}=(\widehat{P} U)^+_{m-1}$ ($m\ge2$), $(\widehat{P} U)^+_0=U(0+)=x_0$ and $(P U)^-_m=(\widehat{P} U)^-_m$ ($m\ge1$), we have
    \[
     a+b-c=0.
    \]
    Furthermore, it holds
    \begin{align*}
    2\rho(\scprm{M_0 P U,\psi}-\scprm{M_0 \widehat{P} U,\psi})
          &= 2\rho\scprm{M_0(P-\widehat{P}) U,\psi}\\
          &= 2\rho\scprm{M_0((P-\widehat{P}) U)(t_{m-1}^+)\chi,\psi}\\
          &= 2\rho\scprm{M_0(P U-U)(t_{m-1}^+)\chi,\psi}=:R(U,\psi),
    \end{align*}
    where $\chi\in\PS_{q+1}(I_m)$ with $\chi(t_{m-1})=1$ and $\chi(\tmi)=0$, $i\in\{0,\dots,q\}$. By Corollary \ref{c:chi2} for $K=T$ (note that $0< \tau_m=|I_m|\leq T$), we obtain
    \[
        \int_{I_m} |\chi(t)|^2 e^{-2\rho(t-t_{m-1})}dx\leq C\tau_m 
    \]
    for some $C\geq 0$.
    Thus, we get
    \begin{align*}
     |R(U,\psi)|
        &\leq 2\rho
                      \tnorm{M_0(P U-U)(t_{m-1}^+)\chi}_{\rho,m}
                      \tnorm{\psi}_{\rho,m}
                   \\
        &=2\rho\       |M_0(P U-U)(t_{m-1}^+)|_H \tnorm{\chi}_{\rho,m}
                      \tnorm{\psi}_{\rho,m}
               \\
        &\leq C^2(2\rho)^2\alpha\tau_m|M_0(P U-U)(t_{m-1}^+)|^2
              +\beta\tnorm{\psi}_{Q,\rho,m}^2,
    \end{align*}
    where $\alpha\beta=1/4$. Combining above transformations we are done.
   \end{proof}

   \begin{lem}\label{lem:rhs2_2} For all $m\in\{1,\ldots,M\}$, we have for all $\psi\in\U^\tau$
    \[
        \Qmr{M_1\eta,\psi}=0=\Qmr{A\eta,\psi}.
    \]     
   \end{lem}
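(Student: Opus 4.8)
The plan is to exploit the single structural feature that makes both identities immediate: the interpolation operator $P$ reproduces function values at exactly the Gau\ss--Radau nodes that define the quadrature $Q_m$, so that $\eta=U-PU$ vanishes at all of these nodes.

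First I would recall that, by the very definition of $\Qm{\cdot}$, for any $H$-valued function $w$ the quantity
\[
   \Qmr{w,\psi}=\Qm{\scp{w,\psi}}=\frac{\tau_m}{2}\sum_{i=0}^q\omega_i^m\scp{w(\tmi),\psi(\tmi)}
\]
depends on $w$ only through its values at the nodes $\tmi$, $i\in\{0,\dots,q\}$. Next, from the Lagrange property $\varphi_{m,i}(\tmj)=\delta_{ij}$ of the basis functions underlying \eqref{eq:P}, one reads off $(PU)(\tmi)=U(\tmi)$, and hence $\eta(\tmi)=U(\tmi)-(PU)(\tmi)=0$ for every $i\in\{0,\dots,q\}$.

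The two claims then follow by linearity. Since $M_1$ is a bounded linear operator, $M_1\eta(\tmi)=M_1\,0=0$, so every summand in $\Qmr{M_1\eta,\psi}$ is $\scp{M_1\eta(\tmi),\psi(\tmi)}=0$, giving $\Qmr{M_1\eta,\psi}=0$. For the term involving $A$ the same argument applies; the only point deserving a word of care is that $A$ is unbounded, so one should observe that $\eta(\tmi)=0\in D(A)$, whence $A\eta(\tmi)=0$ is well-defined (and the quadrature never evaluates $A\eta$ at any off-node time). Consequently each summand $\scp{A\eta(\tmi),\psi(\tmi)}$ vanishes and $\Qmr{A\eta,\psi}=0$. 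There is thus no substantive obstacle here: the entire content is the cancellation of $\eta$ at the quadrature nodes, and the only subtlety worth flagging is the well-definedness of $A\eta$ at those nodes, which is secured by $\eta(\tmi)=0$.
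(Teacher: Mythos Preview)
Your argument is correct and coincides with the paper's own proof: both observe that $\eta(t_{m,i})=U(t_{m,i})-(PU)(t_{m,i})=0$ at every quadrature node, so that each summand in $\Qmr{M_1\eta,\psi}$ and $\Qmr{A\eta,\psi}$ vanishes. Your extra remark that $0\in D(A)$ secures the well-definedness of $A\eta(t_{m,i})$ is a harmless clarification, but otherwise the two proofs are identical in content.
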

   \begin{proof}
    These equalities follow from the fact that $\eta(t_{m,i})=PU(t_{m,i})-U(t_{m,i})=0$ for each $i\in\{0,\ldots,q\}$ and $M_1,A$ are purely spatial operators.
   \end{proof}

   Combining the previous lemmas gives a first result.

   \begin{thm}\label{thm:err2}
    There exists a $C\geq0$ depending on $T$, $\rho$ and $\gamma$, only, such that
    \begin{multline*}
        \scp{M_0\xi^-_M,\xi^-_M} +\e^{2\rho T}\tnorm{\xi}_{Q,\rho}^2
        \leq C\e^{2\rho T}\bigg(
           \tnorm{\partial_t M_0(U-\widehat{P} U)}_{Q,\rho}^2 \\
           +T\max_{1\leq m\leq M}\left\{|M_0\eta^+_{m-1}|_H^2\e^{-2\rho t_{m-1}}\right\}
        \bigg)=:g(U).
    \end{multline*}
   \end{thm}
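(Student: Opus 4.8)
The plan is to combine the three preceding lemmas with the error identity $E_\textnormal{d}^m = E_\textnormal{i}^m$ from \eqref{eq:error2}, and then to run a discrete Grönwall-type telescoping argument over the time slabs $m=1,\dots,M$. First I would write down, for each fixed $m$, the lower bound for $E_\textnormal{d}^m$ supplied by Lemma \ref{lem:lhs2} and the upper bound for $E_\textnormal{i}^m$ obtained by assembling Lemma \ref{lem:rhs2_1} (the $\partial_t M_0$-contribution plus the remainder $R$) and Lemma \ref{lem:rhs2_2} (which kills the $M_1$- and $A$-parts of $\eta$ against the test function $\xi$). Taking $\psi=\xi$ in Lemma \ref{lem:rhs2_1}, the right-hand side $E_\textnormal{i}^m$ becomes $\Qmr{\partial_t M_0(U-\widehat P U),\xi} + R(U,\xi)$; I would estimate the first term by Cauchy--Schwarz in the $\Qmr{\cdot,\cdot}$-inner product, splitting it with a Young inequality into a fully controllable term $\tfrac{1}{4\beta}\tnorm{\partial_t M_0(U-\widehat P U)}_{Q,\rho,m}^2$ and $\beta\tnorm{\xi}_{Q,\rho,m}^2$.

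The key algebraic step is then to equate the lower and upper bounds and to absorb the $\tnorm{\xi}_{Q,\rho,m}^2$-terms coming from $E_\textnormal{i}^m$ (two copies of $\beta\tnorm{\xi}_{Q,\rho,m}^2$ from the Cauchy--Schwarz splitting and from the remainder estimate in Lemma \ref{lem:rhs2_1}) into the favourable $\gamma\tnorm{\xi}_{Q,\rho,m}^2$ produced by Lemma \ref{lem:lhs2}. This forces a choice of the free parameters: I would fix $\beta$ small enough (say $\beta=\gamma/4$, with the matching $\alpha=1/(4\beta)$ dictated by the constraint $\alpha\beta=1/4$) so that after subtraction a strictly positive multiple of $\tnorm{\xi}_{Q,\rho,m}^2$ survives on the left. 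What remains on the left is then
\[
\tfrac{1}{2}\big[\scp{M_0\xi_m^-,\xi_m^-}\e^{-2\rho\tau_m}-\scp{M_0\xi_{m-1}^-,\xi_{m-1}^-}+\scp{M_0\jump{\xi}_{m-1}^0,\jump{\xi}_{m-1}^0}\big]+c\,\tnorm{\xi}_{Q,\rho,m}^2,
\]
bounded above by $C|M_0\eta_{m-1}^+|_H^2\tau_m + \tfrac{1}{4\beta}\tnorm{\partial_t M_0(U-\widehat P U)}_{Q,\rho,m}^2$, using $\scp{M_0\jump{\xi}_{m-1}^0,\jump{\xi}_{m-1}^0}\ge0$ (Remark \ref{rem:reg}(a)) to drop the jump term.

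Next comes the telescoping: I would multiply the $m$-th inequality by the slab weight $\e^{-2\rho t_{m-1}}$ and sum over $m=1,\dots,M$. The boundary terms $\tfrac12\scp{M_0\xi_m^-,\xi_m^-}\e^{-2\rho\tau_m}\e^{-2\rho t_{m-1}}=\tfrac12\scp{M_0\xi_m^-,\xi_m^-}\e^{-2\rho t_m}$ telescope against $-\tfrac12\scp{M_0\xi_{m-1}^-,\xi_{m-1}^-}\e^{-2\rho t_{m-1}}$, leaving only the endpoint contribution $\tfrac12\scp{M_0\xi_M^-,\xi_M^-}\e^{-2\rho t_M}$ (the initial term vanishes since $\xi_0^-=0$). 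The weighted sum of $c\,\tnorm{\xi}_{Q,\rho,m}^2\e^{-2\rho t_{m-1}}$ reassembles exactly into $c\,\tnorm{\xi}_{Q,\rho}^2$ by the definition of the global discrete norm, while the right-hand side sum over the interpolation data turns into $\tnorm{\partial_t M_0(U-\widehat P U)}_{Q,\rho}^2$ plus a max-estimate, $\sum_m |M_0\eta_{m-1}^+|_H^2\tau_m\e^{-2\rho t_{m-1}}\le T\max_m\{|M_0\eta_{m-1}^+|_H^2\e^{-2\rho t_{m-1}}\}$ since $\sum_m\tau_m=T$. Finally I would multiply through by $\e^{2\rho T}=\e^{2\rho t_M}$ to turn the left-hand boundary term into the stated $\scp{M_0\xi_M^-,\xi_M^-}$ and to convert the $c\,\tnorm{\xi}_{Q,\rho}^2$ into $\e^{2\rho T}\tnorm{\xi}_{Q,\rho}^2$, absorbing all parameter-dependent constants into a single $C$ depending only on $T,\rho,\gamma$.

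The main obstacle I anticipate is the bookkeeping of the exponential weights through the telescoping, since the per-slab weight $\e^{-2\rho(t-t_{m-1})}$ built into $\scprm{\cdot,\cdot}$ is referenced to the \emph{left} endpoint of each slab, whereas the global norm $\tnorm{\cdot}_{Q,\rho}$ carries the factor $\e^{-2\rho t_{m-1}}$; one must check that $\e^{-2\rho\tau_m}\cdot\e^{-2\rho t_{m-1}}=\e^{-2\rho t_m}$ aligns the boundary terms so they genuinely telescope, and that no weight mismatch spoils the absorption of the $\tnorm{\xi}^2$-terms. Beyond that, verifying that a single choice of $\beta$ simultaneously controls both surplus $\tnorm{\xi}^2_{Q,\rho,m}$-contributions (leaving $c>0$) is routine once the constraint $\alpha\beta=1/4$ is respected.
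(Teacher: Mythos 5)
Your proposal is correct and follows essentially the same route as the paper: take $\psi=\chi=\xi$ in the error identity, combine the lower bound of Lemma \ref{lem:lhs2} with the upper bounds of Lemmas \ref{lem:rhs2_1} and \ref{lem:rhs2_2}, sum with the weights $\e^{-2\rho t_{m-1}}$ so the boundary terms telescope via $\e^{-2\rho\tau_m}\e^{-2\rho t_{m-1}}=\e^{-2\rho t_m}$, drop the nonnegative jump terms, bound $\sum_m\tau_m(\cdots)$ by $T\max_m(\cdots)$, and absorb the $\tnorm{\xi}_{Q,\rho}^2$-contributions by choosing $\beta$ small relative to $\gamma$ (the paper takes $\beta<\gamma/2$, matching your $\beta=\gamma/4$). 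The weight bookkeeping you flag as a potential obstacle works out exactly as you describe, so there is no gap.
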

   \begin{proof}
     Combining Lemmas \ref{lem:lhs2} to \ref{lem:rhs2_2} for $\psi=\xi$ we have for some $C\geq 1$ depending on $T$ and $\rho$ only
     \begin{align*}
      |E_\textnormal{d}^m|&\geq\frac{1}{2}\left[
                                \scp{M_0\xi^-_m,\xi^-_m}\e^{-2\rho\tau_m}
                               -\scp{M_0\xi^-_{m-1},\xi^-_{m-1}}
                               +\scp{M_0\jump{\xi}_{m-1}^{0},\jump{\xi}_{m-1}^{0}}
                            \right]
                            +\gamma\tnorm{\xi}_{Q,\rho,m}^2,\\
      |E_\textnormal{i}^m|&\leq C\alpha\bigg(
                \tnorm{\partial_t M_0(U-\widehat{P} U)}_{Q,\rho,m}^2
                +\tau_m|M_0\eta^+_{m-1}|_H^2
            \bigg)+2\beta\tnorm{\xi}_{Q,\rho,m}^2.
     \end{align*}
     Summing with weights $\e^{-2\rho t_{m-1}}$ for $m\in\{1,\dots,M\}$ we obtain
     \begin{align*}
      \sum_{m=1}^M\e^{-2\rho t_{m-1}}|E_\textnormal{d}^m|
      &\geq\frac{1}{2}\left[
                    \scp{M_0\xi^-_M,\xi^-_M}\e^{-2\rho t_M}
                   -\scp{M_0\xi^-_{0},\xi^-_{0}}\right] \\
                   &\quad\quad+\frac{1}{2}\sum_{m=1}^M\e^{-2\rho t_{m-1}}\scp{M_0\jump{\xi}_{m-1}^0,\jump{\xi}_{m-1}^0}
                  +\gamma\tnorm{\xi}_{Q,\rho}^2,\\
      &\geq \frac{1}{2}\scp{M_0\xi^-_M,\xi^-_M}\e^{-2\rho T}
            +\gamma\tnorm{\xi}_{Q,\rho}^2,
            \end{align*}
      by $\xi^-_{0}=0$ and neglecting the positive jump-contributions, and
      \begin{multline*}
      \sum_{m=1}^M\e^{-2\rho t_{m-1}}|E_\textnormal{i}^m|
      \leq C\alpha\bigg(
                  \tnorm{\partial_t M_0(U-\widehat{P} U)}_{Q,\rho}^2
                                  +T\max_{1\leq m\leq M}\{|M_0\eta^+_{m-1}|_H^2\e^{-2\rho t_{m-1}}\}
            \bigg)\\
            +2\beta\tnorm{\xi}_{Q,\rho}^2.
     \end{multline*}
     Thus for $\beta<{\gamma}/{2}$ the result is proved upon the equality $E_\textnormal{i}^m=E_\textnormal{d}^m$.
   \end{proof}
   \begin{rem}\label{r:err2} Let $m\in \{1,\ldots,M\}$. Note that the estimate in Theorem \ref{thm:err2} remains valid, if one respectively replaces $T$ by $t_m$, $\xi_M^-$ by $\xi_m^-$ as well as the $\tnorm{\cdot}_{Q,\rho}$ by $\tnorm{\cdot\chi_{\tilde{I}}}_{Q,\rho}$ with $\chi_{\tilde{I}}$ being the characteristic function of $\tilde{I}= \bigcup_{k=1}^m I_k$.
   \end{rem}
   In the following, we want to improve Theorem \ref{thm:err2}. In order to do so, we will need the following technical lemmas.
   They are adaptations of \cite[Lemma 2.1 and Corollary 2.1]{AM04}. For the upcoming result and the corresponding proof, we recall for polynomials $a,b\in \mathcal{P}_q(0,1;H)$
   \[
      \scpr{a,b}=\int_0^1 \scp{a(t),b(t)}\, \e^{-2\rho t}\dt
   \]
   and the corresponding integration by parts formula
    \begin{equation}
        \scpr{a',b} = -\scpr{a,b' } +2\rho\scpr{a,b }+\e^{-2\rho t}\scp{a,b}\Big|_0^1.\label{eq:intpart}
    \end{equation}
   \begin{lem}\label{lem:akmak}
    Let $t_i$, $w_i$, $i\in\{0,\dots,q\}$ be the points and
    weights of the right-sided Gau\ss-Radau quadrature rule of order $q$ on $(0,1]$ with weighting function $t\mapsto\e^{-2\rho t}$.

    Let $p\in \PS_{q}(0,1;H)$ and $\tilde p$ the Lagrange interpolant w.r.t.~$(t_i)_{i\in\{0,\ldots,q\}}$ of $\phi \colon (0,1]\ni t\mapsto p(t)/t$. Then
    \begin{align*}
         \scpr{p',\tilde p} + \langle p(0),\tilde p(0)\rangle_H
            &\geq \frac{1}{2}\left(|p(1)|_H^2\e^{-2\rho}+\scpr{\tilde p,\tilde p}\right) +\rho\scpr{p,\tilde p}\\
            &\geq \frac{1}{2}\left(|p(1)|_H^2\e^{-2\rho}+\scpr{p,p}\right) +\rho\scpr{p,p}.
    \end{align*}
   \end{lem}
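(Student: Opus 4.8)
The plan is to exploit that the auxiliary polynomial $\tilde p$ is coupled to $p$ through the simple algebraic identity $t\tilde p(t)=p(t)+c\,\omega(t)$, where $\omega(t)\coloneqq\prod_{i=0}^{q}(t-t_i)$ is the monic nodal polynomial of degree $q+1$ and $c\in H$ is the leading coefficient of $\tilde p$. Indeed, since all Gau\ss--Radau nodes lie in $(0,1]$, the map $\phi(t)=p(t)/t$ is well defined at each $t_i$, so $\tilde p\in\PS_q(0,1;H)$, and $t\mapsto t\tilde p(t)-p(t)$ is a polynomial of degree at most $q+1$ that vanishes at the $q+1$ nodes $t_0,\dots,t_q$, which forces the stated identity. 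I record two consequences used throughout: because $t_q=1$ is a node, $\tilde p(1)=\phi(1)=p(1)$; and because $\omega$ vanishes at every node, exactness of the rule for degree $\le 2q$ gives $\scpr{c\omega,r}=0$ for every $r\in\PS_{q-1}(0,1;H)$.

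For the \emph{second} inequality I would only use exactness. Since $\langle p,\tilde p\rangle_H$, $\langle\tilde p,\tilde p\rangle_H$ and $\langle p,p\rangle_H$ all have degree $\le 2q$, and $p(t_i)=t_i\tilde p(t_i)$, the rule yields
\[
  \scpr{\tilde p,\tilde p}=\sum_{i=0}^q w_i|\tilde p(t_i)|_H^2,\quad
  \scpr{p,\tilde p}=\sum_{i=0}^q w_i t_i|\tilde p(t_i)|_H^2,\quad
  \scpr{p,p}=\sum_{i=0}^q w_i t_i^2|\tilde p(t_i)|_H^2.
\]
After cancelling the common term $\tfrac12|p(1)|_H^2\e^{-2\rho}$, the claim reduces to $\sum_{i=0}^q w_i|\tilde p(t_i)|_H^2\big(\tfrac12+\rho t_i-(\tfrac12+\rho)t_i^2\big)\ge 0$; as $w_i\ge 0$ this follows term by term from $\tfrac12+\rho t-(\tfrac12+\rho)t^2\ge 0$ on $(0,1]$, which holds because the left-hand side is concave, vanishes at $t=1$ and is positive at $t=0$, hence lies above the (nonnegative) chord joining those endpoints.

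For the \emph{first} inequality I would start from the integration-by-parts identity \eqref{eq:intpart} with $a=p$, $b=\tilde p$; the boundary contribution at $t=1$ is $\e^{-2\rho}\langle p(1),\tilde p(1)\rangle_H=\e^{-2\rho}|p(1)|_H^2$ by $\tilde p(1)=p(1)$, giving
\[
  \scpr{p',\tilde p}+\scp{p(0),\tilde p(0)}
  =-\scpr{p,\tilde p'}+2\rho\scpr{p,\tilde p}+\e^{-2\rho}|p(1)|_H^2.
\]
Next I substitute $p=t\tilde p-c\omega$. In $\scpr{p,\tilde p'}$ the term $\scpr{c\omega,\tilde p'}$ vanishes by the orthogonality noted above (its integrand has degree $\le 2q$ and vanishes at the nodes), so $\scpr{p,\tilde p'}=\scpr{t\tilde p,\tilde p'}$; a further integration by parts of $\scpr{t\tilde p,\tilde p'}=\tfrac12\int_0^1 t\,\e^{-2\rho t}\tfrac{d}{dt}|\tilde p|_H^2\dd t$ (again using $\tilde p(1)=p(1)$) collapses the difference between the two sides of the asserted inequality, after all common terms cancel, to the single quantity $-\rho\,\scpr{c\omega,\tilde p}$. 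Thus the first inequality is equivalent to $\scpr{c\omega,\tilde p}\le 0$.

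The crux, and the only genuinely quadrature-specific step, is this sign. Writing $\langle c,\tilde p(t)\rangle_H=|c|_H^2\,t^q+s(t)$ with $s\in\PS_{q-1}(0,1)$ scalar and using $\int_0^1\omega\,s\,\e^{-2\rho t}\dd t=0$, I obtain $\scpr{c\omega,\tilde p}=|c|_H^2\int_0^1\omega(t)\,t^q\,\e^{-2\rho t}\dd t$. Since $\omega$ is orthogonal to $\PS_{q-1}$ with respect to $\e^{-2\rho t}$, and $t^q-\hat\omega\in\PS_{q-1}$ where $\omega(t)=(t-1)\hat\omega(t)$ with $\hat\omega$ monic of degree $q$, I may replace $t^q$ by $\hat\omega$ under the integral:
\[
  \int_0^1\omega(t)\,t^q\,\e^{-2\rho t}\dd t
  =\int_0^1\omega(t)\,\hat\omega(t)\,\e^{-2\rho t}\dd t
  =\int_0^1(t-1)\,\hat\omega(t)^2\,\e^{-2\rho t}\dd t\le 0,
\]
because $t-1\le 0$ on $(0,1]$. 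Hence $\scpr{c\omega,\tilde p}\le 0$, which closes the first inequality. I expect recognising the factorisation $\omega=(t-1)\hat\omega$ together with the orthogonality $\omega\perp\PS_{q-1}$ to be the main obstacle, since everything else is bookkeeping with integration by parts and with the exactness of the Gau\ss--Radau rule.
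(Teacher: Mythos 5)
Your proof is correct, and it takes a genuinely different route from the paper's. The paper decomposes $p(t)=p(0)+tv(t)$ and $\tilde p=v+p(0)\Lambda$, where $\Lambda\in\PS_q$ interpolates $t\mapsto 1/t$ at the nodes, and after a longer bookkeeping computation reduces the first inequality to the sign condition $\scpr{\Lambda,1-\m\Lambda}\geq 0$; that condition is then established in a separate auxiliary lemma by rewriting it as the quadrature error $Q[g]-I[g]$ for $g(t)=t^{2q+1}$ and invoking the Hermite-interpolation error representation to exhibit the error as $\alpha^2\int_0^1\e^{-2\rho t}\bigl(\prod_{i=0}^{q-1}(t-t_i)^2\bigr)(1-t)\dd t\geq 0$. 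You instead work from the single identity $t\tilde p(t)=p(t)+c\,\omega(t)$ with $\omega$ the monic nodal polynomial, reduce the first inequality to $\scpr{c\omega,\tilde p}\leq 0$, and settle the sign via the orthogonality of $\omega$ to $\PS_{q-1}$ together with the factorisation $\omega=(t-1)\hat\omega$. The two arguments bottom out at literally the same nonnegative integral $\int_0^1(1-t)\hat\omega(t)^2\e^{-2\rho t}\dd t$ (note that $|c|_H^2=|p(0)|_H^2\alpha^2$ with $\alpha$ the leading coefficient of $\Lambda$, so your remainder $-\scpr{c\omega,\tilde p}$ equals the paper's $|p(0)|_H^2\scpr{\Lambda,1-\m\Lambda}$), but your route is shorter, dispenses with the auxiliary lemma and the Hermite machinery, and makes the structural reason for positivity (nodal-polynomial orthogonality for the Radau rule) more transparent; the paper's route stays closer to the Akrivis--Makridakis original it adapts. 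Your treatment of the second inequality, bundling the two comparisons into the single termwise condition $\tfrac12+\rho t_i-(\tfrac12+\rho)t_i^2\geq 0$, is also fine, though slightly more elaborate than the paper's direct use of $t_i^{-1}\geq 1$ in each of $\scpr{\tilde p,\tilde p}\geq\scpr{p,p}$ and $\scpr{p,\tilde p}\geq\scpr{p,p}$.
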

   \begin{proof}
    Define $v\in\PS_{q-1}((0,1);H)$ by $v(t)=(p(t)-p(0))/t$ and $\Lambda\in\PS_{q}[0,1]$ by
    \[
        \Lambda(t_i)=\frac{1}{t_i},\,i\in\{0,\dots,q\}.
    \]
    Then
    \[
        p(t)=p(0)+t v(t)
        \quad\text{and}\quad
        \tilde p(t)=v(t)+p(0)\Lambda(t).
    \]
    Thus,
    \begin{align*}
     \scpr{p',\tilde p}
        &=\scpr{v+\m v',v+p(0)\Lambda}\\
        &=\scpr{v,v}+
          \scpr{v,\m v'}
          +
                   \scpr{v,p(0)\Lambda}+
                   \scpr{v',p(0) \m \Lambda}
               ,
    \end{align*}
    where we denote by $\m$ the multiplication-with-the-argument, that is, $(\m f)(t)\coloneqq tf(t)$. 
    With \eqref{eq:intpart} we obtain for the second term
    \[
        \scpr{v',\m v}=\frac{1}{2}\left(
                                \e^{-2\rho} |v(1)|_H^2+2\rho\scpr{\m v,v}-\scpr{v,v}
                               \right).
    \]
    From $\m v'\Lambda\in\PS_{2q-1}$ and $\m \Lambda'\Lambda\in\PS_{2q}$ together with the exactness of the quadrature rule it follows that
    \begin{align*}
        \scpr{v',p(0) \m \Lambda}
           &=\sum_{i=0}^q w_i\scp{v'(t_i),p(0)}t_i\frac{1}{t_i}\\
            &=\scpr{v',p(0)} \\
            &= 2\rho\scpr{v,p(0)}+\e^{-2\rho}\scp{v(1),p(0)}-\scp{v(0),p(0)},
\end{align*}
and in the same way
 \begin{align}
  \scpr{\Lambda',\m \Lambda}
           &= 2\rho\scpr{\Lambda,1}+\e^{-2\rho}-\Lambda(0).\label{eq:Lambda0}
 \end{align}
    We have thus this far
    \begin{align*}
     \scpr{p',\tilde p}
        &=\frac{1}{2}\left(
                       \e^{-2\rho}|v(1)|_H^2+
                       \scpr{v,v}+
                       2\rho\scpr{\m v,v}
                     \right)+\\
          &\quad +
                   \scpr{v,p(0)\Lambda}+
                   2\rho\scpr{v,p(0)}
                   +\e^{-2\rho}\scp{v(1),p(0)}-\scp{v(0),p(0)}.
               \end{align*}
    Using $v(1)=p(1)-p(0)$ and $\scp{p(0),\tilde p(0)}=\scp{p(0),v(0)}+\Lambda(0)|p(0)|_H^2$ we obtain
    \begin{multline*}
     \scpr{p',\tilde p}+\scp{p(0),\tilde p(0)}
        =\frac{1}{2}\left(
                       \e^{-2\rho}|p(1)|_H^2+
                       \scpr{v,v}+
                       2\rho\scpr{\m v,v}
                     \right)\\
          +
                   \scpr{v,p(0)\Lambda}+
                   2\rho\scpr{v,p(0)}
         +|p(0)|^2_H\left( \Lambda(0)-\frac{\e^{-2\rho}}{2}\right).
    \end{multline*}
    Next, \eqref{eq:Lambda0} yields
    \begin{align*}
     \Lambda(0)
        &= 2\rho\scpr{\Lambda,1}+\e^{-2\rho}-\scpr{\Lambda',\m\Lambda}\\
        &= 2\rho\scpr{\Lambda,1}-\rho\scpr{\Lambda,\m \Lambda}+\frac{1}{2}\left( \e^{-2\rho}+\scpr{\Lambda,\Lambda}\right)
    \end{align*}
    and hence
    \begin{multline*}
     \scpr{p',\tilde p}+\scp{p(0),\tilde p(0)}
        = \frac{1}{2}\left(
                       \e^{-2\rho}|p(1)|_H^2+
                       \scpr{v,v}+
                       2\rho\scpr{\m v,v}
                     \right)\\
          +
                   \scpr{v,p(0)\Lambda}+
                   2\rho\scpr{v,p(0)}
         +|p(0)|^2_H\left( \rho\scpr{\Lambda,2-\m\Lambda}+\frac{1}{2}\scpr{\Lambda,\Lambda}\right)
    \end{multline*}
    With
    \[
        \frac{1}{2}\scpr{v,v}+\scpr{v,p(0) \Lambda}+\frac{1}{2}|p(0)|_H^2\scpr{\Lambda,\Lambda}
        =\frac{1}{2}\scpr{v+p(0)\Lambda,v+p(0)\Lambda}
        =\frac{1}{2}\scpr{\tilde p,\tilde p}
    \]
    it follows
    \begin{align*}
     \scpr{p',\tilde p}+\scp{p(0),\tilde p(0)}
        &=\frac{1}{2}\left(
                       \e^{-2\rho}|p(1)|_H^2+
                       \scpr{\tilde p,\tilde p}
                     \right)\\
          &\quad +\rho\left(
                   \scpr{\m v,v}+
                   2\scpr{v,p(0)}+
                  |p(0)|^2_H \scpr{\Lambda,2-\m\Lambda}
               \right).
    \end{align*}
    Finally,
    \begin{align*}
        \scpr{\m v,v}+
        2\scpr{v,p(0)}+
        |p(0)|^2\scpr{\Lambda,1}
        &=\sum_{i=0}^q w_i\frac{1}{t_i}(t_i^2|v(t_i)|_H^2+2t_i\scp{v(t_i),p(0)}+|p(0)|_H^2) \\
        &=\sum_{i=0}^q w_i\frac{1}{t_i}|p(t_i)|^2_H \\
        &=\sum_{i=0}^q w_i\scp{p(t_i),\tilde p(t_i)}\\
        &=\scpr{p,\tilde p}
    \end{align*}
    gives
    \begin{align*}
     \scpr{p',\tilde p}+\scp{p(0),\tilde p(0)}
        &=\frac{1}{2}\left(
                       \e^{-2\rho}|p(1)|_H^2+
                       \scpr{\tilde p,\tilde p}
                     \right)
          +\rho\left(
                   \scpr{p,\tilde p}+
                   |p(0)|_H^2\scpr{\Lambda,1-\m\Lambda}
               \right).
    \end{align*}
    Using $\scpr{\Lambda,1-\m\Lambda}\geq 0$, which we provide in Lemma \ref{lem:auxLambda}, the first result is proved.
    The second one follows upon using the exactness of the quadrature rule and $t_i^{-1}>1$.
   \end{proof}

   \begin{lem}\label{lem:auxLambda}
    Let $\Lambda\in \PS_{q}[0,1]$ such that $\Lambda(t_i)=\frac{1}{t_i}$ for $i\in\{0,\ldots,q\}$, where $t_i$ is chosen as in Lemma \ref{lem:akmak}. Then
    \[
        \scpr{\Lambda,1-\m\Lambda}\geq 0,
    \]
    where $(\m\Lambda)(t)\coloneqq t\Lambda(t)$.
   \end{lem}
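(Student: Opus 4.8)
The plan is to recognise $1-\m\Lambda$ as a rescaled Gau\ss--Radau node polynomial and then to read off the sign from the theory of orthogonal polynomials. First I would note that $1-\m\Lambda\in\PS_{q+1}[0,1]$ vanishes at each node $t_i$, because $t_i\Lambda(t_i)=1$, and that $(1-\m\Lambda)(0)=1$. Since the node polynomial $\omega(t)\coloneqq\prod_{i=0}^q(t-t_i)$ is, up to scaling, the unique element of $\PS_{q+1}$ with these zeros, comparing values at $t=0$ gives
\[
 1-\m\Lambda=\frac{\omega}{\omega(0)},\qquad \omega(0)=(-1)^{q+1}\prod_{i=0}^q t_i,
\]
and hence $\scpr{\Lambda,1-\m\Lambda}=\omega(0)^{-1}\scpr{\Lambda,\omega}$. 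Everything is thus reduced to comparing the signs of $\scpr{\Lambda,\omega}$ and of $\omega(0)$.

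Next I would exploit that $\omega$ is exactly the node polynomial of the right-sided Gau\ss--Radau rule, so it is monic of degree $q+1$ and orthogonal to $\PS_{q-1}$ with respect to $\scpr{\cdot,\cdot}$. Splitting $\Lambda=\lambda_q t^q+r$ with $r\in\PS_{q-1}$ and leading coefficient $\lambda_q$, orthogonality kills the remainder and leaves $\scpr{\Lambda,\omega}=\lambda_q\scpr{t^q,\omega}$. Here $\lambda_q$ is the top divided difference of $t\mapsto 1/t$ at the nodes, so $\lambda_q=(-1)^q/\prod_{i=0}^q t_i$. To evaluate $\scpr{t^q,\omega}$ I would expand $\omega$ in the monic orthogonal polynomials $(P_k)_k$ for the weight $t\mapsto\e^{-2\rho t}$ on $(0,1)$: orthogonality to $\PS_{q-1}$ forces $\omega=P_{q+1}+c_qP_q$, and the Radau condition $\omega(1)=0$ yields $c_q=-P_{q+1}(1)/P_q(1)$, whence $\scpr{t^q,\omega}=c_q\scpr{P_q,P_q}$. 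Collecting the three sign contributions, all powers of $-1$ and of $\prod_i t_i$ cancel except for a single sign, and one arrives at
\[
 \scpr{\Lambda,1-\m\Lambda}=\frac{-c_q\,\scpr{P_q,P_q}}{\left(\prod_{i=0}^q t_i\right)^2},
\]
so that the asserted inequality is equivalent to $c_q\le0$.

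The main obstacle---indeed the only step that is not bookkeeping---is to establish $c_q\le0$, i.e.\ that $P_{q+1}(1)$ and $P_q(1)$ carry the same sign. I would obtain this from the classical fact that the zeros of the orthogonal polynomials for a strictly positive weight on $(0,1)$ all lie in the open interval $(0,1)$. Consequently $t=1$ lies strictly to the right of every zero of each monic $P_k$, so $P_k(1)>0$ for all $k$; in particular $P_{q+1}(1),P_q(1)>0$ and therefore $c_q<0$. Since $\scpr{P_q,P_q}>0$, the displayed identity is then nonnegative, which proves the lemma.
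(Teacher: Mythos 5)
Your argument is correct, but it proves the lemma by a genuinely different route than the paper. The paper rewrites $\scpr{\Lambda,1-\m\Lambda}$ as the quadrature error $Q[f]-I[f]$ for $f(t)=t\Lambda(t)^2$, reduces it by exactness to $\alpha^2(Q[g]-I[g])$ with $g(t)=t^{2q+1}$, and then exhibits this error via the Hermite interpolation remainder as $\alpha^2\int_0^1\e^{-2\rho t}\bigl(\prod_{i=0}^{q-1}(t-t_i)^2\bigr)(1-t)\dt\geq0$ --- an explicitly nonnegative integral, requiring nothing beyond the standard interpolation error formula. You instead identify $1-\m\Lambda$ with the normalised Radau node polynomial $\omega/\omega(0)$, use its orthogonality to $\PS_{q-1}$ together with the divided-difference formula $\lambda_q=(-1)^q/\prod_i t_i$, and expand $\omega=P_{q+1}+c_qP_q$ in the monic orthogonal polynomials to arrive at the closed form $\scpr{\Lambda,1-\m\Lambda}=-c_q\scpr{P_q,P_q}/\bigl(\prod_i t_i\bigr)^2$ with $c_q=-P_{q+1}(1)/P_q(1)$; the sign then follows from the classical fact that all zeros of the $P_k$ lie in $(0,1)$, so $P_k(1)>0$. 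All the steps check out (I verified the sign bookkeeping, e.g.\ for $q=1$), and your identity has the bonus of showing \emph{strict} positivity and giving the exact value of the scalar product; the price is the reliance on the theory of orthogonal polynomials (location of zeros, the quasi-orthogonality representation of the Radau node polynomial), where the paper's Peano-kernel-style computation is more elementary and self-contained.
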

   \begin{proof}
    We rewrite the scalar product as a quadrature error:
    \[
        \scpr{\Lambda,1-\m\Lambda}
        =\sum_{i=0}^q w_i\frac{1}{t_i}-\int_0^1\e^{-2\rho t}t\Lambda^2(t)\dt
        =Q[f]-I[f]
    \]
    for $f$ given by $f(t)=t\Lambda^2(t)$, where $Q[g]=\sum_{i=0}^q w_i g(t_i)$ and $I[g]=\int_0^1 g$ for suitable $g$. There exists a constant $\alpha\in\R$ and a polynomial $w_0\in\PS_{q-1}[0,1]$, such that
    \[
        \Lambda(t)=\alpha t^{q}+w_0(t)
        \quad\text{which implies}\quad
        f(t)=\alpha^2 t^{2q+1}+w_1(t),
    \]
    where $w_1\in\PS_{2q}[0,1]$. Thus, setting $g(t)=t^{2q+1}$, we have that
    \[
        \scpr{\Lambda,1-\m\Lambda}
        =\alpha^2\left( Q[g]-I[g]\right),
    \]
    due to the exactness of the quadrature rule for polynomials of degree $2q$.

    Let $\Pi w\in \PS_{2q}[0,1]$ be an Hermite-interpolant of a given function $w$ satisfying
    \begin{align*}
        \Pi w(t_i)&=w(t_i),\quad i\in\{0,\dots,q\},\\
        (\Pi w)'(t_i)&=w'(t_i),\quad i\in\{0,\dots,q-1\}.
    \end{align*}
    Then it follows 
    \[
        Q[g]
           =\sum_{i=0}^q w_i t_i^{2q+1}
           =\sum_{i=0}^q w_i (\Pi g)(t_i^{2q+1})
           =Q[\Pi g]
           =I[\Pi g].
    \]
    Using that for each $t\in[0,1]$ there is $\zeta\in (0,1)$ such that
    \[
     (\Pi g)(t)- g(t)=-\frac{g^{(2q+1)}(\zeta)}{(2q+1)!}(t-1)\prod_{i=0}^{q-1}(t-t_i)^2= (1-t)\prod_{i=0}^{q-1}(t-t_i)^2
    \]see, for instance, \cite[Section 2.1.5]{SB02},
we infer that
    \begin{align*}
       \scpr{\Lambda,1-\m \Lambda}
        &=\alpha^2I[\Pi g- g]\\
        &=\alpha^2\int_0^1\e^{-2\rho t}\left(\prod_{i=0}^{q-1}(t-t_i)^2 \right)(1-t)\dt
         \geq 0.\qedhere
    \end{align*}
   \end{proof}

 Now we are able to improve Theorem \ref{thm:err2} following \cite[Corollary 2.1]{AM04} and \cite{VR13}.

   \begin{thm}\label{thm:discr_error}
    There exists $C\geq 0$ depending on $T$, $q$, $\|M_0\|$, $\|M_1\|$, $\gamma$, and $\rho$ such that
    \begin{align*}
        \sup_{t\in[0,T]}\scp{M_0\xi(t),\xi(t)}
        \leq Cg(U),
    \end{align*}
    with $g(U)$ defined as in Theorem~\ref{thm:err2}.
   \end{thm}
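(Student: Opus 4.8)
The plan is to feed the special interpolation operator underlying Lemma \ref{lem:akmak} into the error equation \eqref{eq:error2_}, so as to gain \emph{nodal} control of $\scp{M_0\xi(t),\xi(t)}$ together with a correctly scaled $M_0$-weighted $L^2$-quantity, and then to upgrade the latter to a supremum bound by a finite-dimensional norm equivalence on the reference interval. Concretely, on each $I_m$ let $\tilde\xi\in\U^\tau$ be the Lagrange interpolant, at the transformed Gauß--Radau nodes $\tmi$, of $t\mapsto\xi(t)/(t-t_{m-1})$; then $\tilde\xi(\tmi)$ is a positive scalar multiple of $\xi(\tmi)$, and since $\tilde\xi$ is again an $H$-valued polynomial of degree $q$ it is an admissible test function. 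I would insert $\chi=\tilde\xi$ into \eqref{eq:error2_}.

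The first key point is that both occurrences of $A$ drop out. On the right-hand side this is Lemma \ref{lem:rhs2_2}, as $\eta$ vanishes at the quadrature nodes. On the left-hand side, $\Qmr{A\xi,\tilde\xi}=\frac{\tau_m}{2}\sum_{i=0}^q\omega_i^m\scp{A\xi(\tmi),\tilde\xi(\tmi)}$, and because $\tilde\xi(\tmi)$ is proportional to $\xi(\tmi)$ while $\scp{Ax,x}=0$ for $x\in D(A)$ by skew-selfadjointness, every summand vanishes. The $M_1$-contribution on the left is bounded by $C\tnorm{\xi}_{\rho,m}^2$ using $\|M_1\|$ and the boundedness (on the fixed reference interval, hence with a constant depending only on $q$ and $\rho\tau_m\le\rho T$) of the linear map $\xi\mapsto\tilde\xi$; the $\partial_tM_0\eta$-term on the right is handled exactly as in Lemma \ref{lem:rhs2_1} with $\psi=\tilde\xi$, while $\Qmr{M_1\eta,\tilde\xi}=0$ again by Lemma \ref{lem:rhs2_2}.

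For the decisive $\partial_tM_0$-term on the left I would apply the $M_0$-weighted analogue of Lemma \ref{lem:akmak}, whose proof carries over verbatim with $\scp{\,\cdot\,,\,\cdot\,}$ replaced by $\scp{M_0\,\cdot\,,\,\cdot\,}$, since $M_0$ is a fixed selfadjoint operator with $\scp{M_0x,x}\ge0$ commuting with all time operations (so the integration by parts \eqref{eq:intpart}, the quadrature exactness, and Lemma \ref{lem:auxLambda} are untouched). After transforming $I_m$ to the reference interval $(0,1]$ — which turns the weight parameter into $\rho\tau_m\le\rho T$ — and combining with the jump term $\scp{M_0\jump{\xi}_{m-1}^0,\tilde\xi^+_{m-1}}$, one obtains a lower bound of the shape
\[
   \tfrac12\scp{M_0\xi^-_m,\xi^-_m}\e^{-2\rho\tau_m}
   +c\,S_m
   -\scp{M_0\xi^-_{m-1},\tilde\xi^+_{m-1}},
\]
with $c>0$ and $S_m$ the reference-scaled $M_0$-weighted $L^2$-quantity (so that $\tau_m S_m=\scprm{M_0\xi,\xi}$) supplied by the gain $1/t_i>1$ in Lemma \ref{lem:akmak}. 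The coupling term is absorbed by the Cauchy--Schwarz inequality for the $M_0$-semi-inner product followed by Young's inequality, reintroducing only a fraction of $\scp{M_0\xi^-_{m-1},\xi^-_{m-1}}$.

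Collecting the estimates yields, for each $m$, a recursion controlling $\scp{M_0\xi^-_m,\xi^-_m}\e^{-2\rho t_m}$ and $S_m$ in terms of the same quantities at level $m-1$ plus the interpolation data occurring in $g(U)$. A discrete Gronwall argument (using $\tau_m\le T$ and $\sum_m\tau_m=T$) then bounds $\max_m\scp{M_0\xi^-_m,\xi^-_m}$ and $\max_m S_m$ by $Cg(U)$, where Theorem \ref{thm:err2}, together with Remark \ref{r:err2} for the intermediate endpoints and the already-established bound on $\tnorm{\xi}_{Q,\rho}^2$, provides the base estimates. Finally, on each $I_m$ the scalar polynomial $t\mapsto\scp{M_0\xi(t),\xi(t)}$ has degree $2q$; transforming to $[0,1]$ and invoking the equivalence of norms on the finite-dimensional space of such polynomials (with constant depending only on $q$ and $\rho T$, the weight being bounded below on $[0,1]$) gives $\sup_{t\in I_m}\scp{M_0\xi(t),\xi(t)}\le C\,S_m\le Cg(U)$, and taking the maximum over $m$ concludes. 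I expect the main obstacle to be exactly this last mechanism: verifying the $M_0$-weighted version of Lemma \ref{lem:akmak} and keeping the scaling bookkeeping on the reference interval exact, so that the gain $1/t_i>1$ precisely compensates the $\tau_m^{-1}$ incurred by the norm equivalence and no inverse power of the step size survives.
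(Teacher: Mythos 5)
Your proposal is correct and follows essentially the same route as the paper: testing the error equation with the Gau\ss--Radau Lagrange interpolant of $t\mapsto\xi(t)/(t-t_{m-1})$ (the paper normalises by $\tau_m$), invoking Lemma \ref{lem:akmak} in an $M_0$-weighted form (the paper equivalently substitutes $p=\sqrt{M_0}\xi$, $\tilde p=\sqrt{M_0}\phi$), cancelling both $A$-terms by skew-selfadjointness and Lemma \ref{lem:rhs2_2}, absorbing the coupling term via Young, and converting the quadrature quantity into a supremum by norm equivalence on $\mathcal{P}_q$ with the $\tau_m^{-1}$-scaling controlled through Corollary \ref{c:t0}. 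The only cosmetic difference is your discrete-Gronwall framing, where the paper simply invokes Theorem \ref{thm:err2} and Remark \ref{r:err2} directly for the terms at $t_{m-1}^-$ and for $\tnorm{\xi}_{Q,\rho,m}^2$.
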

   \begin{proof}
       For the discrete error $\xi=U^\tau-PU\in \mathcal{U}^\tau$ we define $\phi$ by
       \[
           \phi|_{I_m}=P\left(t\mapsto \frac{\tau_m}{t-t_{m-1}}\xi(t) \right)\quad(m\in\{1,\ldots,M\}).
       \]
       Then for all $m\in\{1,\ldots,M\}$ and $i\in\{0,\ldots,q\}$ we have
       \[
           \scp{M_0\phi(\tmi),\phi(\tmi)}
              =\frac{\tau_m}{\tmi-t_{m-1}}\scp{M_0\xi(\tmi),\xi(\tmi)}
              \geq \scp{ M_0\xi(\tmi),\xi(\tmi)}
       \]
       and by Lemma~\ref{lem:akmak} (apply the lemma to the functions $p=\sqrt{M_0}\xi$ and $\tilde p=\sqrt{M_0}\phi$ rescaled on $[0,1]$)
       \begin{align*}
          \Qmr{\partial_t M_0\xi,2\phi}+\scp{M_0\xi_{m-1}^+,2\phi_{m-1}^+}
               &\geq \frac{1}{\tau_m}\Qmr{M_0 \phi,\phi}
                \geq \frac{1}{\tau_m}\Qmr{M_0 \xi,\xi}.
       \end{align*}
        By the equivalence of norms on $\mathcal{P}_q([0,1])$, there exists $K_1\geq 0$ depending on $q$ only, such that
       \[
        \sup_{t\in[0,1]}|p(t)| \leq K_1 \intop_0^1 |p(t)| \dd t \quad(p\in \mathcal{P}_q([0,1])).
       \]
  Consequently, we obtain for all $m\in\{1,\ldots,M\}$
       \[
           \sup_{t\in I_m}\scp{M_0\xi(t),\xi(t)}
              \leq \frac{K_1}{\tau_m}\e^{2\rho\tau_m}\Qmr{M_0\xi,\xi}
              \leq \frac{K}{\tau_m}\Qmr{M_0\xi,\xi}
       \]
       where $K:=K_1e^{2\rho T}\geq\max\limits_{m\in\{1,\dots,M\}}\{\e^{2\rho\tau_m}\}\geq K_1$. Moreover, we have
       \begin{align*}
           \Qmr{A\xi,2\phi}
              &=\frac{\tau_m}{2}\sum_{i=0}^q\omega^m_i
                       \scp{A\xi(\tmi),2\phi(\tmi)}\\
              &=\frac{\tau_m}{2}\sum_{i=0}^q\omega^m_i\frac{2\tau_m}{\tmi-t_{m-1}}
                       \scp{A\xi(\tmi),\xi(\tmi)}
               =0
       \end{align*}
       upon $A=-A^*$. Together, it follows for all $m\in\{1,\ldots,M\}$
       \begin{align*}
           &\sup_{t\in I_m}\scp{M_0\xi(t),\xi(t)}\\
           &  \leq K\left( \Qmr{(\partial_t M_0+M_1+A)\xi,2\phi}+\scp{M_0\xi_{m-1}^+,2\phi_{m-1}^+}-\Qmr{M_1 \xi ,2\varphi}\right) \\
            & = K\bigg( \Qmr{(\partial_t M_0+M_1+A)\xi,2\phi}
                           +\scp{M_0\jump{\xi}^0_{m-1},2\phi_{m-1}^+}\\
                           &\quad \quad \quad +\scp{M_0\xi^-_{m-1},2\phi_{m-1}^+}-\Qmr{M_1 \xi,2\varphi}
                     \bigg).
       \end{align*}
       Using the error equation \eqref{eq:error2_} with $\chi=2\phi$ (recall $\eta=U-PU$), we obtain
       \begin{align*}
           \sup_{t\in I_m}\scp{M_0\xi(t),\xi(t)}
              &\leq K\bigg( \Qmr{(\partial_t M_0+M_1+A)\eta,2\phi}
                     +\scp{M_0\jump{\eta}_{m-1},2\phi_{m-1}^+}\\
            &\quad \quad \quad          +\scp{M_0\xi^-_{m-1},2\phi_{m-1}^+}-\Qmr{M_1 \xi,2\varphi}\bigg).
       \end{align*}
       Using Lemma~\ref{lem:rhs2_1}, Lemma \ref{lem:rhs2_2} with $\psi=2\phi$ and Theorem~\ref{thm:err2}, we estimate further with some $C_1\geq1$ depending on $q$, $T$, and $\rho$ such that
       \begin{align*}
           \sup_{t\in I_m}\scp{M_0\xi(t),\xi(t)}&\leq K\bigg(
                        \Qmr{\partial_t M_0(U-\widehat{P} U),2\phi}
                       + R(U,2\phi)\\
                                     &\quad \quad \quad        +\scp{M_0\xi^-_{m-1},2\phi_{m-1}^+}-\Qmr{M_1 \xi,2\varphi}
                     \bigg)\\
              &\leq C_1\alpha_1\bigg(
                                 \tnorm{\partial_t M_0(U-\widehat{P} U)}_{Q,\rho,m}^2
                                +|M_0(P U-U)(t_{m-1}^+)|^2                                
                             \bigg)\\&\quad \quad \quad        
                   +C_1\alpha_2\scp{M_0\xi^-_{m-1},\xi^-_{m-1}}+C_1\alpha_1\|M_1\|^2 \tnorm{\xi}_{Q,\rho,m}^2                    \\&\quad \quad \quad        +3\beta_1\Qmr{2\phi,2\phi}
                   +\beta_2\scp{2M_0\phi^+_{m-1},2\phi_{m-1}^+},                  
       \end{align*}
       where $\alpha_i\beta_i=\frac{1}{4}$, $i\in\{1,2\}$ and we used that \[\langle M_0 u,v\rangle=\langle \sqrt{M_0} u,\sqrt{M_0}v\rangle\leq  \langle M_0 u,u\rangle\langle M_0 v,v\rangle\] for all $u,v\in H$, by the non-negativity and selfadjointness of $M_0$. Using Theorem~\ref{thm:err2} (and Remark~\ref{r:err2}), we, thus, get 
       \begin{equation}\label{e:at}
          \sup_{t\in I_m}\scp{M_0\xi(t),\xi(t)}\leq C(\alpha_1+\alpha_2) g(U)+3\beta_1\Qmr{2\phi,2\phi}                   +\beta_2\scp{2M_0\phi^+_{m-1},2\phi_{m-1}^+}
       \end{equation}
       for some $C\geq 1$ depending on $q$, $T$, $\rho$, and $\|M_1\|$, where $g(U)$ is defined in Theorem \ref{thm:err2}.
      Next, by Corollary \ref{c:t0}, we find $c>0$ depending on $\rho$ and $T$ only such that
      \[
         \frac{\tau_m}{t_{m,i}-t_{m-1}}\leq \frac{\tau_m}{t_{m,0}-t_{m-1}} \leq \frac{1}{c} \quad(m\in\{1,\ldots,M\}).
      \]
      Hence, for all $m\in\{1,\ldots,M\}$,
       \begin{multline*}
           \Qmr{2\phi,2\phi}
            \leq L\Qmr{\xi,\xi}
            = (4/c)\tnorm{\xi}_{Q,\rho,m}^2
           \\ \text{and}\quad
           \scp{2M_0\phi^+_{m-1},2\phi_{m-1}^+}
              \leq (4/c)\sup_{t\in I_m}\scp{M_0\xi(t),\xi(t)}.
       \end{multline*}
       Next, we choose $\beta_2=(4/c)\frac{1}{2}$. Thus, appealing to \eqref{e:at}, we obtain for all $m\in\{1,\ldots,M\}$
       \begin{align*}
             \frac{1}{2}\sup_{t\in I_m}\scp{M_0\xi(t),\xi(t)} & = \sup_{t\in I_m}\scp{M_0\xi(t),\xi(t)} - \frac{1}{2}\sup_{t\in I_m}\scp{M_0\xi(t),\xi(t)}
             \\ & \leq C(\alpha_1+\alpha_2) g(U)+3\beta_1(4/c)\tnorm{\xi}_{Q,\rho,m}^2,
       \end{align*}
       using Theorem \ref{thm:err2} (i.e.~Remark \ref{r:err2}) again for the second term on the right-hand side und computing the supremum over $m\in\{1,\ldots,M\}$ in the latter inequality, we obtain the assertion.
   \end{proof}
   \subsection{Estimating the interpolation error in time}\label{su:ip}

   In the previous section we showed that the discrete error is bounded in terms of the interpolation errors.
   We finalize the error estimates in time in this section focussing on the interpolation error. The aim and, thus, main theorem of this section is Theorem \ref{thm:err_inter}, where we estimate the difference between the exact solution $U$ of \eqref{e:op} and the solution $U^\tau$ of the quadrature formulation \eqref{eq:discr_quad_form} with right-hand side $PF$ and initial value $U(0+)$. We use the same notation as in the previous section. In addition, we set $\tau\coloneqq \max\{\tau_m:m\in\{1,\ldots,M\}\}$. Moreover, shall further assume that the hypotheses of Theorem \ref{thm:Pic} are in effect.

   \begin{lem}\label{lem:dthatP}
      There exists $C\geq 0$ depending on $q$ and $T$ such that for all $V\in H_\rho^{q+3}(\mathbb{R};H)$
      \[
         \tnorm{\partial_t (V-\widehat{P} V)}_{Q,\rho}
            \leq C\tau^{q+1} |\partial_t^{q+3}V|_\rho.
      \]      
   \end{lem}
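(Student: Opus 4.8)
The plan is to reduce everything to a single reference interval by scaling, prove a polynomial-exactness (consistency) statement there, and then invoke a Bramble--Hilbert-type argument to obtain the $\tau^{q+1}$ rate, finally summing the local contributions in the weighted norm $\tnorm{\cdot}_{Q,\rho}$. First I would recall that $\widehat P$ is defined locally on each $I_m$ by Hermite-type interpolation at the $q+2$ nodes $t_{m,-1}=t_{m-1}$ and $\tmi$, $i\in\{0,\dots,q\}$, so that $\widehat P V|_{I_m}\in\PS_{q+1}(I_m;H)$ and $\widehat P V$ reproduces polynomials of degree at most $q+1$ on each $I_m$. The key consequence is that $\partial_t(\widehat P V)$ reproduces polynomials of degree at most $q$, so the operator $V\mapsto \partial_t(V-\widehat P V)$ annihilates $\PS_{q+1}(I_m;H)$; together with $q+2$ interpolation conditions this is exactly what lets us gain the order $q+1$ after differentiation while the residual involves the $(q+3)$-rd derivative of $V$.

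The core of the argument is a local estimate on the reference interval $\widehat I$. I would pull $V|_{I_m}$ back via the affine map $T_m$ to a function $\widehat V$ on $\widehat I$, noting that the interpolation operator commutes with this affine change of variables and that $\partial_t$ picks up a factor $\tau_m/2$. On the reference interval, consider the linear functional $\widehat V\mapsto \partial_t(\widehat V-\widehat P\widehat V)$ as a bounded map from $H^{q+3}(\widehat I;H)$ into $L^2(\widehat I;H)$ (boundedness uses the Sobolev embedding $H^{q+3}\hookrightarrow C^{q+1}$ so the point evaluations defining $\widehat P$ are controlled). Since this functional vanishes on $\PS_{q+1}$, the Bramble--Hilbert lemma gives
\[
   \|\partial_t(\widehat V-\widehat P\widehat V)\|_{L^2(\widehat I;H)}
      \leq \widehat C\,\|\partial_t^{q+3}\widehat V\|_{L^2(\widehat I;H)},
\]
with $\widehat C$ depending only on $q$. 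Scaling back to $I_m$ converts the two derivative factors and the measure into powers of $\tau_m$: the left-hand $\partial_t$ and the $L^2$-measure contribute, while the $q+3$ derivatives on the right contribute $(\tau_m/2)^{q+3}$, and after accounting for the weight $\e^{-2\rho(t-t_{m-1})}$ (bounded between $\e^{-2\rho\tau_m}$ and $1$, hence absorbed into a constant depending on $T$ and $\rho$) one lands at a local bound of the form $C\tau_m^{q+1}$ times the local $(q+3)$-derivative seminorm.

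Finally I would assemble the global estimate. By definition $\tnorm{\partial_t(V-\widehat P V)}_{Q,\rho}^2=\sum_{m=1}^M \Qmr{\partial_t(V-\widehat P V),\partial_t(V-\widehat P V)}\e^{-2\rho t_{m-1}}$, and since $\partial_t(V-\widehat P V)$ is \emph{not} a polynomial the quadrature is not exact; however the Gau\ss--Radau rule has nonnegative weights, so $\Qmr{w,w}$ is comparable to $\tnorm{w}_{\rho,m}^2$ up to a constant depending on $q$ (equivalence of the discrete and continuous $L^2$ norms on $\PS_q$, applied to the polynomial $\partial_t\widehat P V$ after splitting off $\partial_t V$, or directly via quadrature stability). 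Summing the local bounds, bounding each $\tau_m\le\tau$, and recombining the weights $\e^{-2\rho t_{m-1}}$ with the local weights reproduces $|\partial_t^{q+3}V|_\rho^2=\sum_m\int_{I_m}|\partial_t^{q+3}V|_H^2\e^{-2\rho t}\dt$, yielding the claimed bound with $C$ depending only on $q$ and $T$ (and $\rho$ through the bounded weight ratios). The main obstacle I anticipate is the careful bookkeeping of the exponential weights under the affine scaling and inside the quadrature—specifically verifying that replacing $\Qmr{\cdot,\cdot}$ by the exact weighted integral costs only a $q$-dependent constant when the integrand is not a polynomial of degree $\le 2q$—since this is where a naive argument could silently lose the uniform control in $\tau$.
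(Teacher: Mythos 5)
There is a genuine gap at the final assembly step, and it sits exactly where you yourself anticipated trouble. Your core estimate is an $L^2$-type bound obtained from Bramble--Hilbert on the reference interval, i.e.\ a bound on the weighted $L^2(I_m)$-norm of $\partial_t(V-\widehat P V)$. But the left-hand side of the lemma is the \emph{quadrature} norm: by definition $\tnorm{\partial_t(V-\widehat P V)}_{Q,\rho}^2$ is a finite weighted sum of the point values $|(\partial_t(V-\widehat P V))(\tmi)|_H^2$ at the Gau\ss--Radau nodes. For a non-polynomial integrand this is \emph{not} comparable to the continuous weighted $L^2$-norm up to a $q$-dependent constant: a function can be large at the $q+1$ nodes of $I_m$ while having arbitrarily small $L^2(I_m)$-norm, so the ``verification'' you flag as the main obstacle in fact fails. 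Your two proposed bridges do not close the gap either: splitting $\partial_t V-\partial_t\widehat P V$ and invoking the equivalence of discrete and continuous norms on $\PS_q$ for the polynomial piece destroys the cancellation that produces the factor $\tau^{q+1}$ (each piece separately is only $O(1)$), while ``quadrature stability'' bounds the quadrature norm by the \emph{sup}-norm over $I_m$, which your $L^2$-based Bramble--Hilbert estimate does not provide.

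The repair is to run the consistency argument in the sup-norm rather than in $L^2$, which is precisely what the paper does. Since the quadrature weights satisfy $\sum_{i=0}^q\omega_i^m\le 2$, it suffices to bound $\sup_{t\in I_m}|(\partial_t(V-\widehat P V))(t)|_H$, and the standard interpolation estimate $\sup_{I_m}|(v-\widehat P v)'|\le C\tau_m^{q+1}\sup_{I_m}|v^{(q+2)}|$ (valid because $\widehat P$ interpolates at $q+2$ nodes and hence reproduces $\PS_{q+1}$ --- the same mechanism you correctly identified) gives the rate directly at every node; the Sobolev embedding $H_\rho^{q+3}(\mathbb{R};H)\hookrightarrow C_\rho^{q+2}(\mathbb{R};H)$ then converts $\sup_{t\in[0,T]}|\partial_t^{q+2}V(t)|_H$ into $|\partial_t^{q+3}V|_\rho$. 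This also explains why the lemma requires $q+3$ derivatives although the interpolation estimate itself only consumes $q+2$: the extra derivative pays for the passage from a pointwise bound to the $H_\rho$-norm on the right-hand side --- a step your $L^2$ framework would avoid, but which becomes unavoidable once the target norm is the discrete one.
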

   \begin{proof}
      First we note that $H_\rho^{q+3}(\mathbb{R};H)\hookrightarrow C_\rho^{q+2}(\mathbb{R};H)$ by the Sobolev-embedding theorem. By the definition of $\tnorm{\cdot}_{Q,\rho}$
      we have that
      \begin{align*}
         \tnorm{\partial_t (V-\widehat{P} V)}_{Q,\rho}^2
            &= \sum_{m=1}^M\Qm{|\partial_t(V-\widehat{P} V)|_H^2\e^{-2\rho t}}\\
            &= \sum_{m=1}^M \frac{\tau_m}{2} \sum_{i=0}^q \omega^m_i |(\partial_t(V-\widehat{P} V))(\tmi)|_H^2\e^{-2\rho \tmi}.
      \end{align*}
      Using the standard result from interpolation theory
      \[
         \sup_{t\in I_m} |(v-\widehat{P} v)'(t)|
            \leq C \tau_m^{q+1} \sup_{t\in I_m} |v^{(q+2)}(t)|,
      \]
      for all $v \in W^{q+2,\infty}(0,T)$ we obtain
      \begin{align*}
         \tnorm{\partial_t (V-\widehat{P} V)}_{Q,\rho}^2
            &\leq C^2\sum_{m=1}^M \frac{\tau_m}{2}
                   \tau_m^{2(q+1)}
                   \sum_{i=0}^q \omega^m_i
                      \sup_{t\in I_m}
                         |\partial_t^{p+2}V(t)|_H^2
                   \e^{-2\rho \tmi}\\
            &\leq C^2\tau^{2(q+1)}
                   \sup_{t\in [0,T]}
                         |\partial_t^{p+2}V(t)|_H^2.\qedhere
      \end{align*}
   \end{proof}
  For the next two lemmas, we recall the standard result from interpolation theory
      \begin{gather}\label{eq:inter_standard}
         \sup_{t\in I_m} |(v-P v)(t)|
            \leq C \tau_m^{q+1} \sup_{t\in I_m} |v^{(q+1)}(t)|,
      \end{gather}
      for all $v \in W^{q+1,\infty}(0,T)$, see, for instance, \cite[Section 2.1.4]{SB02}.
   \begin{lem}\label{lem:jump}
     There exists $C\geq 0$ depending on $q$ and $T$ such that for all $V\in H_\rho^{q+2}(\mathbb{R};H)$
      \[
         |(V-P V)(t_{m-1}^+)|_H
            \leq C\tau_m^{q+1} |\partial_t^{q+2}V|_\rho.                     
      \]      
   \end{lem}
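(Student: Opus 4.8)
The plan is to combine the standard pointwise interpolation estimate \eqref{eq:inter_standard} with the Sobolev-type embedding recalled in Remark \ref{rem:reg}(b), exactly along the lines of the proof of Lemma \ref{lem:dthatP}. The point worth keeping in mind is that, in contrast to $\widehat P$, the left endpoint $t_{m-1}$ is \emph{not} an interpolation node of $P$ (the right-sided Gauß--Radau nodes $\tmi$, $i\in\{0,\dots,q\}$, all lie in $(t_{m-1},t_m]$), so $(V-PV)(t_{m-1}^+)$ does not vanish and the full interpolation error must be controlled there.

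First I would record that $V\in H_\rho^{q+2}(\mathbb{R};H)\hookrightarrow C_\rho^{q+1}(\mathbb{R};H)$ by the embedding in Remark \ref{rem:reg}(b), so that the pointwise values of $V$ and of its derivatives up to order $q+1$ are well defined and continuous; in particular $V(t_{m-1}^+)=V(t_{m-1})$ and the value of the polynomial $PV|_{I_m}$ at $t_{m-1}$ makes sense. Since $t_{m-1}\in\overline{I_m}$ and both $V$ and $PV|_{I_m}$ are continuous on $\overline{I_m}$, the estimate \eqref{eq:inter_standard} (which extends verbatim from scalar- to $H$-valued functions, either by testing against unit vectors or via the Peano kernel representation of the Lagrange remainder) yields
\[
 |(V-PV)(t_{m-1}^+)|_H
   \leq \sup_{t\in I_m}|(V-PV)(t)|_H
   \leq C\tau_m^{q+1}\sup_{t\in I_m}|\partial_t^{q+1}V(t)|_H,
\]
with $C$ depending on $q$ only (the node distribution on the reference interval is fixed).

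It then remains to convert the unweighted supremum of the $(q+1)$-st derivative into the weighted $L^2$-norm of the $(q+2)$-nd derivative. Here I would first pass to the weight: since $I_m\subseteq[0,T]$,
\[
 \sup_{t\in I_m}|\partial_t^{q+1}V(t)|_H
   \leq \e^{\rho T}\sup_{t\in\mathbb{R}}|\partial_t^{q+1}V(t)|_H\,\e^{-\rho t},
\]
and then apply the embedding of Remark \ref{rem:reg}(b) to $\partial_t^{q+1}V\in H_\rho^1(\mathbb{R};H)$ to bound the right-hand side by $C\big(|\partial_t^{q+1}V|_\rho+|\partial_t^{q+2}V|_\rho\big)$. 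Finally, using that $\partial_t$ is boundedly invertible on $H_\rho(\mathbb{R};H)$ with $|\partial_t^{-1}|\leq 1/\rho$ (equivalently, the continuity estimate of Theorem \ref{thm:Pic} applied with $M_0=1$, $M_1=0$, $A=0$, so $\gamma=\rho$), one has $|\partial_t^{q+1}V|_\rho\leq \rho^{-1}|\partial_t^{q+2}V|_\rho$, whence the lower-order term is absorbed and $\sup_{t\in I_m}|\partial_t^{q+1}V(t)|_H\leq C|\partial_t^{q+2}V|_\rho$. Inserting this into the first display gives the claim, with $C$ depending on $q$, $T$ (and the fixed $\rho$, suppressed as in Lemma \ref{lem:dthatP}).

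I expect the interpolation step to be entirely routine; the only place requiring care is the last paragraph, where the weighted Sobolev embedding together with the Poincaré-type bound $|\partial_t^{-1}|\leq 1/\rho$ must be combined to (i) jump from a sup-norm to a weighted $L^2$-norm and (ii) trade the $(q+1)$-st derivative for the $(q+2)$-nd one, all while keeping the constant independent of $m$ and of $V$.
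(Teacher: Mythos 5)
Your proof is correct and follows the paper's argument exactly: bound the point value by the supremum over $I_m$, apply the standard interpolation estimate \eqref{eq:inter_standard}, and conclude via the Sobolev embedding. The paper compresses your final paragraph into the single sentence ``The claim follows from the Sobolev-embedding theorem'', so your explicit chain (weighted embedding of Remark \ref{rem:reg}(b) combined with $|\partial_t^{-1}|\leq 1/\rho$ to trade the $(q+1)$-st derivative for the $(q+2)$-nd) is just a more detailed rendering of the same step.
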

   \begin{proof}
      We obtain
      \begin{align*}
        |(V-PV)(t_{m-1}^+)|_H
            &\leq \sup_{t\in I_m}
                     |(V-PV)(t)|_H\\
            &\leq C\tau_m^{q+1}
                     \sup_{t\in I_m}
                        |\partial_t^{q+1}V(t)|_H.
      \end{align*}
      The claim follows from the Sobolev-embedding theorem.
   \end{proof}

   With the previous lemmas we can already estimate $g(U)$. Now let us estimate the final term needed to
   estimate the error $U-U^\tau$.

   \begin{lem}\label{lem:inter_scp} There exists $C\geq 0$ depending on $T$ and $q$ such that
      for all $U\in H_\rho^{q+2}(\mathbb{R};H)$
      \begin{align*}
         \sup_{t\in [0,T]}&\scp{M_0(U-P U)(t),(U-P U)(t)}\\
            &\leq C\tau^{2(q+1)}|\partial_t^{q+2} U|^2_\rho.
      \end{align*}
   \end{lem}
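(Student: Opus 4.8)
The plan is to reduce the $M_0$-weighted quadratic form to the plain $H$-norm of the interpolation error, and then to invoke essentially the same interpolation-plus-embedding argument already used in the proof of Lemma \ref{lem:jump}. First I would use that, since $M_0$ is bounded, selfadjoint and non-negative, one has $\scp{M_0 v,v}\leq \|M_0\|\,|v|_H^2$ for every $v\in H$. Applying this pointwise in time with $v=(U-PU)(t)$ gives
\[
   \sup_{t\in[0,T]}\scp{M_0(U-PU)(t),(U-PU)(t)}\leq \|M_0\|\sup_{t\in[0,T]}|(U-PU)(t)|_H^2,
\]
so that it suffices to estimate the unweighted interpolation error $\sup_{t}|(U-PU)(t)|_H$. (Equivalently, writing $\scp{M_0 v,v}=|\sqrt{M_0}v|_H^2$ and using that the temporal interpolation $P$ commutes with the spatial operator $\sqrt{M_0}$, one reduces to $\sup_t|(W-PW)(t)|_H$ with $W=\sqrt{M_0}U$; both routes cost only a harmless factor $\|M_0\|$, which I absorb into the constant.)

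Next, on each subinterval $I_m$ I would apply the standard interpolation bound \eqref{eq:inter_standard},
\[
   \sup_{t\in I_m}|(U-PU)(t)|_H\leq C\tau_m^{q+1}\sup_{t\in I_m}|\partial_t^{q+1}U(t)|_H,
\]
which is available because $P$ reproduces polynomials of degree $q$ on $I_m$. This already yields the power $\tau^{2(q+1)}$ after squaring and bounding $\tau_m\leq\tau$; the only remaining point is to trade the sup-norm of the $(q+1)$-st derivative for the $H_\rho$-norm of the $(q+2)$-nd derivative appearing on the right-hand side.

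This bridging step is exactly the one carried out at the end of the proof of Lemma \ref{lem:jump}, and I expect it to be the crux of the argument. It rests on the weighted Sobolev embedding $H_\rho^1(\mathbb{R};H)\hookrightarrow C_\rho(\mathbb{R};H)$ from Remark \ref{rem:reg}(b): applying it to $\partial_t^{q+1}U$, and using that $U$ vanishes at $-\infty$ together with Cauchy--Schwarz against the weight, one obtains $\sup_{t\in\mathbb{R}}|\partial_t^{q+1}U(t)|_H\,\e^{-\rho t}\leq (2\rho)^{-1/2}|\partial_t^{q+2}U|_\rho$. Restricting to $[0,T]$ costs a factor $\e^{\rho T}$, so that $\sup_{t\in I_m}|\partial_t^{q+1}U(t)|_H\leq C\,|\partial_t^{q+2}U|_\rho$ with $C$ depending only on $\rho$ and $T$.

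Combining the three displays, squaring, using $\tau_m\leq\tau$ and finally taking the supremum over $m\in\{1,\dots,M\}$ gives the claimed estimate, with a constant $C$ that depends on $q$, $T$ (and, if one does not regard them as fixed data of the problem, also on $\rho$ and $\|M_0\|$). The argument is otherwise routine; the single delicate ingredient is the derivative-order and norm conversion via the exponentially weighted embedding.
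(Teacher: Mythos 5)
Your proof is correct and follows essentially the same route as the paper: reduce the $M_0$-weighted form to the plain interpolation error, apply the standard estimate \eqref{eq:inter_standard}, and convert $\sup_t|\partial_t^{q+1}U|_H$ into $|\partial_t^{q+2}U|_\rho$ via the weighted Sobolev embedding. The only (immaterial) difference is that the paper handles the $M_0$ factor by Cauchy--Schwarz and Young, splitting into $|M_0(U-PU)|_H^2+|(U-PU)|_H^2$ and interpolating both $M_0U$ and $U$, whereas you bound $\scp{M_0v,v}\leq\|M_0\|\,|v|_H^2$ directly; both absorb the same $\|M_0\|$ (and $\rho$) dependence into the constant.
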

   \begin{proof}
      Using the Cauchy--Schwarz and Young inequality we derive
      \begin{align*}
         \sup_{t\in [0,T]}&\scp{M_0(U-P U)(t),(U-P U)(t)}\\
            &\leq \frac{1}{2}
                  \left(
                     \sup_{t\in [0,T]}|M_0(U-P U)(t)|_H^2
                    +\sup_{t\in [0,T]}|(U-P U)(t)|_H^2
                  \right).
      \end{align*}
      Using \eqref{eq:inter_standard} with $v=M_0U$ and $v=U$ we obtain
      \begin{align*}
         \sup_{t\in [0,T]}|M_0(U-P U)(t)|_H
            &\leq C \tau^{q+1}\sup_{t\in [0,T]}|\partial_t^{q+1}M_0 U(t)|_H,\\
         \sup_{t\in [0,T]}|(U-P U)(t)|_H
            &\leq C \tau^{q+1}\sup_{t\in [0,T]}|\partial_t^{q+1}U(t)|_H.
      \end{align*}
      Combining these results the claim follows from the Sobolev-embedding theorem.
   \end{proof}

   Combining the previous lemmas, Theorem \ref{thm:err2} and Theorem \ref{thm:discr_error}, we can bound the discrete error in time.
   \begin{thm}\label{thm:err_inter}
      Assume that $U\in H_\rho^{q+3}(\mathbb{R};H)$. Then there exists $C\geq 0$ depending on $\|M_0\|$, $\|M_1\|$, $\rho$, $T$, $\gamma$, $q$ such that
      \[
          \sup_{t\in[0,T]}\scp{M_0(U-U^\tau)(t),(U-U^\tau)(t)}+
          \tnorm{U-U^\tau}_{Q,\rho}^2
          \leq C \tau^{2(q+1)}|\partial_t^{q+3}U|^2_\rho.
      \]
   \end{thm}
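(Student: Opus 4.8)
The plan is to combine the two error contributions $\xi=U^\tau-PU$ and $\eta=U-PU$ via the triangle inequality, estimating each separately using the machinery already assembled. First I would recall the splitting $U-U^\tau=\eta-\xi$, so that both the energy-type term $\scp{M_0(U-U^\tau)(t),(U-U^\tau)(t)}$ and the quadrature-norm term $\tnorm{U-U^\tau}_{Q,\rho}^2$ can be bounded (up to a factor of $2$) by the corresponding quantities for $\xi$ and for $\eta$. The $\eta$-part is purely an interpolation error and is controlled directly: Lemma \ref{lem:inter_scp} bounds $\sup_{t}\scp{M_0\eta(t),\eta(t)}$ by $C\tau^{2(q+1)}|\partial_t^{q+2}U|_\rho^2$, and the quadrature norm $\tnorm{\eta}_{Q,\rho}^2$ is handled by the same standard interpolation estimate \eqref{eq:inter_standard} (evaluating $\eta$ at the Gau\ss--Radau nodes and summing the weights, exactly as in the proof of Lemma \ref{lem:dthatP}).

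The $\xi$-part is where the heavy lifting was done in the previous subsection. Theorem \ref{thm:discr_error} already gives $\sup_{t\in[0,T]}\scp{M_0\xi(t),\xi(t)}\leq Cg(U)$, and Theorem \ref{thm:err2} gives $\tnorm{\xi}_{Q,\rho}^2\leq C\e^{-2\rho T}g(U)$ (after dropping the non-negative $\scp{M_0\xi_M^-,\xi_M^-}$ term). So both $\xi$-contributions are bounded by a constant times $g(U)$. It therefore remains to estimate $g(U)$ itself in terms of $\tau^{2(q+1)}|\partial_t^{q+3}U|_\rho^2$. Recalling
\[
   g(U)=C\e^{2\rho T}\Bigl(\tnorm{\partial_t M_0(U-\widehat{P}U)}_{Q,\rho}^2+T\max_{1\leq m\leq M}\bigl\{|M_0\eta_{m-1}^+|_H^2\e^{-2\rho t_{m-1}}\bigr\}\Bigr),
\]
I would bound the first summand by applying Lemma \ref{lem:dthatP} to $V=M_0U$, which yields $\tnorm{\partial_t M_0(U-\widehat{P}U)}_{Q,\rho}\leq C\tau^{q+1}|\partial_t^{q+3}M_0U|_\rho\leq C\|M_0\|\tau^{q+1}|\partial_t^{q+3}U|_\rho$ (using that $M_0$ is a bounded, time-independent operator commuting with $\partial_t$, so $\partial_t^{q+3}M_0U=M_0\partial_t^{q+3}U$). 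The second summand, involving $|M_0\eta_{m-1}^+|_H=|M_0(U-PU)(t_{m-1}^+)|_H$, is controlled by $\|M_0\|$ times the pointwise jump estimate of Lemma \ref{lem:jump}, giving $C\|M_0\|\tau^{q+1}|\partial_t^{q+2}U|_\rho$, whose square is of the required order $\tau^{2(q+1)}$.

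Assembling these pieces, both $\xi$- and $\eta$-contributions are $\leq C\tau^{2(q+1)}|\partial_t^{q+3}U|_\rho^2$, where the constant absorbs $\|M_0\|$, $\|M_1\|$, $\rho$, $T$, $\gamma$, $q$ and the factor $\e^{2\rho T}$; note that $|\partial_t^{q+2}U|_\rho\leq |\partial_t^{q+3}U|_\rho$ up to lower-order terms (or can be bounded via the $H_\rho^{q+3}$ regularity directly), so the single highest-order norm $|\partial_t^{q+3}U|_\rho$ dominates. The main obstacle I anticipate is purely bookkeeping rather than conceptual: one must be careful that the $\e^{\pm2\rho t_{m-1}}$ weights appearing in $g(U)$, in $\tnorm{\cdot}_{Q,\rho}$, and in the $\rho$-inner product all combine consistently, and that the regularity index in each invoked lemma ($q+2$ for Lemmas \ref{lem:jump} and \ref{lem:inter_scp}, $q+3$ for Lemma \ref{lem:dthatP}) is covered by the hypothesis $U\in H_\rho^{q+3}(\mathbb{R};H)$. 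Since the governing estimate from Lemma \ref{lem:dthatP} already requires $q+3$ derivatives, that is the binding regularity requirement and the reason $|\partial_t^{q+3}U|_\rho$ is the natural quantity on the right-hand side.
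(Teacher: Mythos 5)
Your proposal is correct and follows essentially the same route as the paper: split $U-U^\tau$ into $\xi$ and $\eta$, control $\xi$ via Theorems \ref{thm:err2} and \ref{thm:discr_error} together with the bound $g(U)\leq C\tau^{2(q+1)}|\partial_t^{q+3}U|_\rho^2$ obtained from Lemmas \ref{lem:dthatP} and \ref{lem:jump} applied to $V=M_0U$, and control $\eta$ via Lemma \ref{lem:inter_scp}. The only (harmless) difference is that the paper notes $\tnorm{\eta}_{Q,\rho}=0$ exactly, since $\eta$ vanishes at the Gau\ss--Radau nodes used in the quadrature norm, whereas you propose to estimate this term by interpolation theory; your route still yields the required order.
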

   
   \begin{proof}
    By Lemma \ref{lem:dthatP} and Lemma \ref{lem:jump} applied to $V=M_0U$ we have that
    \[
    g(U)\leq C_1\tau^{2(q+1)} |\partial_t^{q+3}U|^2_\rho
    \]for some $C_1\geq0$.
We note that $\tnorm{U-U^\tau}_{Q,\rho}\leq \tnorm{\eta}_{Q,\rho}+\tnorm{\xi}_{Q,\rho}=\tnorm{\xi}_{Q,\rho}$ and hence, by Theorem \ref{thm:err2} we obtain
    \[
      \tnorm{U-U^\tau}_{Q,\rho}^2\leq g(U).
    \]
    Moreover, 
    \[
    \scp{M_0(U-U^\tau)(t),(U-U^\tau)(t)}\leq 2\scp{M_0\eta(t),\eta(t)}+2\scp{M_0\xi(t),\xi(t)}
    \]
and thus, by Theorem \ref{thm:discr_error} and Lemma \ref{lem:inter_scp} we infer
\[
\sup_{t\in[0,T]}\scp{M_0(U-U^\tau)(t),(U-U^\tau)(t)}\leq C( \tau^{2(q+1)}|\partial_t^{q+2} U|^2_\rho+ g(U))
\]
for some $C\geq0$. Combining these estimates, the claim follows.  
   \end{proof}

   \begin{rem}
      The above analysis holds for all evolutionary problems and gives error bounds for the
      semi-discrete solution of order $q+1$, assuming enough regularity in time.
      For a fully discrete method, a spatial discretisation
      has to be defined too. This step, however, has to be done for each problem considered separately.
   \end{rem}

   \section{Full discretisation for Example~\ref{ex:CTS}}\label{s:Full}
      Let us assume a regular, quasi uniform and shape-regular triangulation $\Omega_h$
      of $\Omega$ into triangular open cells $\sigma$ with maximal cell diameter $h$.
      Moreover, we assume that the interfaces between $\Omega_{\mathrm{ell}}$, $\Omega_{\mathrm{par}}$ and $\Omega_{\mathrm{hyp}}$ are polygonal such that the triangulation $\Omega_h$ fits to these interfaces.

      As the whole article is mainly concerned with the correct time-discretization, in this section, we will employ the custom of the ``generic constant'' $C\geq0$ that may vary from line to line, which, however, depends on $T$, $\rho$, $\|M_1\|$, $\|M_0\|$, $q$, and $\gamma$ and on $k$, the order of the assumed spatial regularity, only.
      
      Then the fully discretised counterpart $\U^\tau_h$ to $\U$ is given by
      \[
          \U^\tau_h := \left\{
                      (u_h,v_h)\in\U^\tau:\,
                      u_h|_{I_m}\in\PS_q(I_m,V_1(\Omega)),\,
                      v_h|_{I_m}\in\PS_q(I_m,V_2(\Omega)),\,
                      m\in\{1,\dots,m\}
                  \right\},
      \]
      where the spatial spaces are
      \begin{align*}
        V_1(\Omega)&:=\left\{v\in H_0^1(\Omega);\forall \sigma:\,v|_\sigma\in\PS_k(\sigma)\right\},\\
        V_2(\Omega)&:=\left\{w\in H(\Div,\Omega);\forall \sigma:\,w|_\sigma\in RT_{k-1}(\sigma)\right\}.
      \end{align*}
      Here $\PS_k(\sigma)$ is the space of polynomials of degree up to $k$ on the cell
      $\sigma$ and $RT_{k-1}(\sigma)$ is a the Raviart-Thomas-space, defined by
      \[
         RT_{k-1}(\sigma)=(\PS_{k-1}(\sigma))^n+\vx\PS_{k-1}(\sigma).
      \]
      Note that
      \begin{align*}
          \PS_{k-1}(\sigma)\subset RT_{k-1}(\sigma)&\subset\PS_k(\sigma),\\
          \Div(RT_{k-1}(\sigma))&\subset \PS_{k-1}(\sigma)
          \quad\text{and}\quad \\
          RT_{k-1}(\sigma)\cdot\vn|_{\partial\sigma}&\subset \PS_{k-1}(\partial\sigma).
      \end{align*}
      
        Furthermore, if the mesh consists of quadrilateral or hexahedral cells, in above definitions 
  and statements the polynomials space $\PS_k(\sigma)$ can be replaced by $\QS_k(\sigma)$ including all 
  polynomials of total degree $k$ over $\sigma$.
      
      \begin{rem}[Solvability of the fully discrete system]
        We can apply the general existence theory that was also used in Proposition~\ref{prop:solvability}. More precisely, the positive definiteness still holds, since the triangulation fits to the interfaces and hence, the uniqueness of the system is warranted. However, since the problem is finite-dimensional, the uniqueness implies the existence of a solution of the fully discretised problem.
      \end{rem}
      Let us come to the interpolation operator $I=(I_1,I_2)$.
      For $I_1:C(\Omega)\to V_1$ we use the Scott--Zhang interpolant on each cell $\sigma$, 
  see \cite{SZ90} for a precise definition, that is patched together continuously.
  Here local interpolation error estimates can be given 
  using $L^2$-norms also in 3d, which is not possible for standard Lagrange 
  interpolation.
      For $I_2:W\to V_2$ with \[W(\sigma)=\left\{q\in (L^s(\sigma))^n:\,\Div q\in L^2(\sigma)\right\},\; s>2,\] we also
      use the standard interpolator, defined via moments, see \cite{BF91}. Note that in the following, in order to avoid a cluttered notation as much as possible, we will not explicitly keep track on the number of components of the $L^2(\Omega)$- or $H^k(\Omega)$-spaces under consideration, as it will be obvious from the context.

      Standard local interpolation error estimates yield for all $v\in H_0^1(\Omega)\cap H^r(\Omega)$
      \begin{align*}
        \norm{v-I_1 v}{0,\Omega}       &\leq C h^r \norm{v}{r,\Omega},\\
        \norm{\grad(v-I_1v)}{0,\Omega} &\leq C h^{r-1} \norm{v}{r,\Omega},
      \end{align*}
      where $1\leq r \leq k+1$ and for all $q\in H^s(\Omega)$ such that $\Div q\in H^s(\Omega)$
      \begin{align*}
        \norm{q-I_2 q}{0,\Omega}       &\leq C h^s \norm{q}{s,\Omega},\\
        \norm{\Div(q-I_2 q)}{0,\Omega} &\leq C h^s \norm{\Div q}{s,\Omega},
      \end{align*}
      where $1\leq s\leq k$, see \cite{BF91}.

      Let $U_h^\tau\in\U^\tau_h$ be the solution of the fully discretised system and $PIU\in\U^\tau_h$ be the interpolated solution of \eqref{eq:evo}
      for the operators $M_0,M_1$ given in Example~\ref{ex:CTS} and $A$ given as in Example \ref{ex:illus}. Then we obtain analogously to the derivation of the errors of the semi-discretisation
      \begin{multline}\label{eq:full_estimate}
          \sup_{t\in[0,T]}\scp{M_0(PIU-U_h^\tau)(t),(PIU-U_h^\tau)(t)}+
          \tnorm{PIU-U_h^\tau}_{Q,\rho}^2\\
          \leq C\Bigg(
                    \tnorm{\partial_t M_0(U-\widehat{P}IU)}_{Q,\rho}^2
                   +\tnorm{M_1(U-PIU)}_{Q,\rho}^2
                   +\tnorm{A  (U-PIU)}_{Q,\rho}^2\\
                   +T\max_{1\leq m\leq M}\left\{
                                            |M_0(PIU-IU)(t_{m-1}^+)|_H^2\e^{-2\rho t_{m-1}}
                                         \right\}
                 \Bigg),
      \end{multline}
      where we remark that in contrast to Theorem \ref{thm:err2}  the terms $\tnorm{M_1(U-PIU)}_{Q,\rho}^2$ and $\tnorm{A  (U-PIU)}_{Q,\rho}^2$ do not vanish, since we also interpolate with respect to space.
      In the following group of lemmas we estimate the terms on the right-hand side of \eqref{eq:full_estimate} 
      and start with a term partcularly needed for the final convergence estimate in Theorem \ref{thm:cts}. Beforehand, let us introduce 
      \[
          \norm{u}{Q,\rho,k,D}^2
               = \sum_{m=1}^M\Qm{|u|^2_{k,D}}\e^{-2\rho t_{m-1}}
      \]
      where $D\subseteq \Omega$ is measurable.
      \begin{lem}\label{lem:full_interp1}
         It holds for $U=(u,v)\in H_\rho(\mathbb{R};H^{k}(\Omega)\times H^k(\Omega))$ 
         \[
              \tnorm{U-PIU}_{Q,\rho}
                 \leq C h^{k}\left( 
                           \norm{u}{Q,\rho,k,\Omega}+
                            h^{k}\norm{v}{Q,\rho,k,\Omega}
                        \right),
           \]
           Moreover, if $U=(u,v)\in H_\rho(\mathbb{R};D(A))$ such that $AU\in H_\rho(\mathbb{R};H^{k}(\Omega)\times H^k(\Omega))$, then
              \[\tnorm{A(U-PIU)}_{Q,\rho}
                 \leq Ch^{k}
                      \left( 
                            \norm{u}{Q,\rho,k+1,\Omega}+
                            \norm{\Div v}{Q,\rho,k,\Omega}
                      \right).           
          \]
      \end{lem}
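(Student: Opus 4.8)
The plan is to exploit that every quantity in the discrete norms $\tnorm{\cdot}_{Q,\rho}$ and $\norm{\cdot}{Q,\rho,k,\Omega}$ enters through the quadrature $\Qm{\cdot}$, which samples a function only at the Gau\ss--Radau nodes $\tmi$. Since the temporal interpolation operator $P$ reproduces any (continuous) function at exactly these nodes, we have $(PIU)(\tmi)=(IU)(\tmi)$ for every $m$ and every $i\in\{0,\ldots,q\}$, where $I=(I_1,I_2)$ acts fibrewise in time. Consequently the temporal interpolation is invisible to both discrete norms, and the first step is to record the identities $\tnorm{U-PIU}_{Q,\rho}=\tnorm{U-IU}_{Q,\rho}$ and $\tnorm{A(U-PIU)}_{Q,\rho}=\tnorm{A(U-IU)}_{Q,\rho}$. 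This reduces both assertions to purely spatial interpolation estimates applied pointwise in time at the nodes $\tmi$, which is the crucial simplification.

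For the first bound I would unravel the $H$-norm into its two components, writing $|(U-IU)(\tmi)|_H^2=\norm{(u-I_1u)(\tmi)}{0,\Omega}^2+\norm{(v-I_2v)(\tmi)}{0,\Omega}^2$. Applying the stated $L^2$ estimate for the Scott--Zhang interpolant $I_1$ and for the Raviart--Thomas interpolant $I_2$ gives, at each node, $\norm{(u-I_1u)(\tmi)}{0,\Omega}\leq Ch^{k}\snorm{u(\tmi)}{k,\Omega}$ and $\norm{(v-I_2v)(\tmi)}{0,\Omega}\leq Ch^{k}\snorm{v(\tmi)}{k,\Omega}$. Inserting these into $\Qm{|(U-IU)|_H^2}$, multiplying by $\e^{-2\rho t_{m-1}}$ and summing over $m\in\{1,\dots,M\}$ reassembles the weighted seminorms $\norm{u}{Q,\rho,k,\Omega}$ and $\norm{v}{Q,\rho,k,\Omega}$; the claimed estimate then follows after taking square roots.

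For the second bound I would use the explicit block form of $A$ from Example \ref{ex:illus}, so that $A(U-IU)=(\Div(v-I_2v),\grad_0(u-I_1u))$ and hence $|A(U-IU)(\tmi)|_H^2=\norm{\Div(v-I_2v)(\tmi)}{0,\Omega}^2+\norm{\grad(u-I_1u)(\tmi)}{0,\Omega}^2$. Here I would invoke the gradient estimate for $I_1$ with $r=k+1$, giving $\norm{\grad(u-I_1u)(\tmi)}{0,\Omega}\leq Ch^{k}\snorm{u(\tmi)}{k+1,\Omega}$, together with the divergence estimate for $I_2$ with $s=k$, giving $\norm{\Div(v-I_2v)(\tmi)}{0,\Omega}\leq Ch^{k}\snorm{\Div v(\tmi)}{k,\Omega}$. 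Summing against the quadrature weights and the factors $\e^{-2\rho t_{m-1}}$ exactly as before produces $\norm{u}{Q,\rho,k+1,\Omega}$ and $\norm{\Div v}{Q,\rho,k,\Omega}$, which yields the second estimate.

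The only genuinely delicate point is the justification of the pointwise-in-time evaluation at the nodes $\tmi$ underlying the node-reduction step: the fibrewise interpolants $I_1u(\tmi)$, $I_2v(\tmi)$ and the exact-sampling identity must be meaningful. This is already implicit in the very definition of the quadrature-based norms $\norm{\cdot}{Q,\rho,k,\Omega}$ and is secured by the temporal Sobolev embedding (Remark \ref{rem:reg}(b)) under the running regularity hypotheses. Once this is granted, the remainder is the routine bookkeeping of matching the weighted nodal sums to the defined seminorms.
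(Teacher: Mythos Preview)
Your proposal is correct and follows essentially the same route as the paper: both arguments first use that $P$ interpolates at the Gau\ss--Radau nodes to replace $PIU$ by $IU$ inside the quadrature norms, then split into components and apply the standard Scott--Zhang and Raviart--Thomas interpolation bounds node by node before reassembling the weighted sums. The only cosmetic difference is that for $u-I_1u$ the paper invokes the Scott--Zhang estimate with $r=k+1$ (yielding $h^{k+1}\norm{u}{Q,\rho,k+1,\Omega}$) whereas you use $r=k$; both are valid, and neither matches the stated bound exactly (the $h^{k}$ in front of $\norm{v}{Q,\rho,k,\Omega}$ in the displayed statement is evidently a misprint).
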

      \begin{proof}
         By the definition of $\Qmr{\cdot}$ we have
         \begin{align*}
             \tnorm{U-PIU}_{Q,\rho}^2
                &=
             \tnorm{U-IU}_{Q,\rho}^2\\
                &=\sum_{m=1}^M\Qm{\norm{U-IU}{0,\Omega}^2}\e^{-2\rho t_{m-1}}\\
                &=\sum_{m=1}^M\Qm{\norm{u-I_1 u}{0,\Omega}^2+
                                  \norm{v-I_2 v}{0,\Omega}^2}\e^{-2\rho t_{m-1}}\\
                &\leq C\sum_{m=1}^M\Qm{ h^{2(k+1)}|u(\cdot)|_{k+1,\Omega}^2+
                                        h^{2k}|v(\cdot)|_{k,\Omega}^2}\e^{-2\rho t_{m-1}}\\
                &= C \left( 
                            h^{2(k+1)}\norm{u}{Q,\rho,k+1,\Omega}^2+
                            h^{2k}\norm{v}{Q,\rho,k,\Omega}^2
                        \right).
         \end{align*}
         Very similarly we have for the second norm
         \begin{align*}
             \tnorm{A(U-PIU)}_{Q,\rho}^2
                &= \tnorm{A(U-IU)}_{Q,\rho}^2\\
                &=\sum_{m=1}^M\Qm{\norm{\grad(u-I_1 u)}{0,\Omega}^2+
                                  \norm{\Div(v-I_2 v)}{0,\Omega}^2}\e^{-2\rho t_{m-1}}\\
                &\leq C\sum_{m=1}^M\Qm{ h^{2k}|u|_{k+1,\Omega}^2+
                                        h^{2k}|\Div v|_{k,\Omega}^2}\e^{-2\rho t_{m-1}}\\
                &= Ch^{2k}\left( 
                               \norm{u}{Q,\rho,k+1,\Omega}^2+
                               \norm{\Div v}{Q,\rho,k,\Omega}^2
                          \right).\qedhere
         \end{align*}       
      \end{proof}
      
      \begin{lem}\label{lem:full_interp2}
         It holds for $U=(u,v)\in H_\rho(\mathbb{R};H^{k}(\Omega)\times H^k(\Omega))$
     \[
            \tnorm{M_1(U-PIU)}_{Q,\rho}^2
                 \leq C h^{2k}\left(
                                       \norm{u}{Q,\rho,k,\Omega}+
                                       \norm{v}{Q,\rho,k,\Omega}
                                    \right)^2
       \]
      \end{lem}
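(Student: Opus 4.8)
The plan is to follow the proof of Lemma~\ref{lem:full_interp1} almost verbatim, the decisive simplification being that $M_1$ acts boundedly and purely in space, so that no derivatives of the interpolation error enter. First I would exploit that the time-interpolation operator $P$ reproduces the values of $IU$ at the Gau\ss--Radau nodes $\tmi$, $i\in\{0,\dots,q\}$, whereas the discrete scalar product $\Qmr{\cdot}$ samples only at precisely these nodes. Hence $(PIU)(\tmi)=(IU)(\tmi)$ for every $i$, and therefore
\[
   \tnorm{M_1(U-PIU)}_{Q,\rho}^2
      = \tnorm{M_1(U-IU)}_{Q,\rho}^2
      = \sum_{m=1}^M\Qm{|M_1(U-IU)|_H^2}\e^{-2\rho t_{m-1}}.
\]

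Next, $M_1$ is bounded, so pointwise in time $|M_1(U-IU)|_H^2\leq\|M_1\|^2\,|(U-IU)|_H^2=\|M_1\|^2\bigl(\norm{u-I_1u}{0,\Omega}^2+\norm{v-I_2v}{0,\Omega}^2\bigr)$, the factor $\|M_1\|^2$ being absorbed into the generic constant. I would then invoke the local spatial interpolation estimates recorded before Lemma~\ref{lem:full_interp1}, but -- and this is the only genuine choice in the argument -- with the \emph{reduced} exponent $r=k$ for the $u$-component (rather than the full $k+1$ used in Lemma~\ref{lem:full_interp1}) and $s=k$ for the $v$-component, so that both contributions carry the common factor $h^{k}$:
\[
   \norm{u-I_1u}{0,\Omega}\leq Ch^{k}|u|_{k,\Omega},
   \qquad
   \norm{v-I_2v}{0,\Omega}\leq Ch^{k}|v|_{k,\Omega}.
\]

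Inserting these bounds into the quadrature sum and recalling the definition of $\norm{\cdot}{Q,\rho,k,\Omega}$ yields
\[
   \tnorm{M_1(U-PIU)}_{Q,\rho}^2
      \leq Ch^{2k}\left(\norm{u}{Q,\rho,k,\Omega}^2+\norm{v}{Q,\rho,k,\Omega}^2\right),
\]
whence the stated form follows from the elementary inequality $a^2+b^2\leq(a+b)^2$ for $a,b\geq0$. I do not expect any real obstacle here: the whole point is that, unlike the $A$-term in Lemma~\ref{lem:full_interp1}, the bounded operator $M_1$ lets the estimate reduce to the plain $L^2$-interpolation bounds, and the only thing to get right is to match the powers of $h$ by deliberately using order $k$ for $u$ as well.
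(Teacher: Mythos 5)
Your argument is correct and is essentially the paper's: the paper's proof is the one-liner ``follows from Lemma~\ref{lem:full_interp1} and the boundedness of $M_1$'', and what you write out (exactness of the quadrature at the nodes $\tmi$ to replace $PIU$ by $IU$, pulling out $\|M_1\|$, then the spatial $L^2$-interpolation bounds) is precisely the computation behind that citation. Your deliberate choice of the reduced order $r=k$ for the $u$-component is a sensible way to match the stated right-hand side and, if anything, makes the bookkeeping of powers of $h$ cleaner than the paper's.
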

      \begin{proof}
         The assertion follows from Lemma \ref{lem:full_interp1} and the boundedness of $M_1$.
      \end{proof}
      
      \begin{lem}\label{lem:full_interp3}
         For $U=(u,v)\in H_\rho^1(\mathbb{R};H^{k}(\Omega)\times H^k(\Omega))\cap H_\rho^{q+2}(\mathbb{R};L^2(\Omega)\times L^2(\Omega))$ we have that
          \begin{multline*}
             \sup_{t\in[0,T]}\scp{M_0(U-PIU)(t),(U-PIU)(t)}\\
                 \leq C \Bigg(
                              h^{2k}\sup_{t\in[0,T]}
                                    \left(
                                          \snorm{u(t)}{k,\Omega}+
                                          \snorm{v(t)}{k,\Omega}
                                    \right)^2                   +\tau^{2(q+1)}|\partial_t^{q+2} IU|_\rho       
                        \Bigg).
          \end{multline*}
          
      \end{lem}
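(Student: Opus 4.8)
The plan is to decompose the total error into a purely spatial interpolation error and the temporal interpolation error of the spatially interpolated function, invoking the spatial estimates recalled above for the former and Lemma~\ref{lem:inter_scp} for the latter. Concretely, I would write
\[
 U-PIU=(U-IU)+(IU-P(IU)),
\]
and use that, by the selfadjointness and non-negativity of $M_0$, the map $w\mapsto \scp{M_0 w,w}^{1/2}$ is a seminorm on $H$. Hence $\scp{M_0 a,a}$, with $a=U-PIU$, is controlled by $2\scp{M_0(U-IU),(U-IU)}+2\scp{M_0(IU-P(IU)),(IU-P(IU))}$, and taking the supremum over $t\in[0,T]$ reduces the claim to estimating these two contributions separately.

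For the spatial contribution I would first bound, using the boundedness of $M_0$ (here $\|M_0\|\le 1$, absorbed into $C$),
\[
 \scp{M_0(U-IU)(t),(U-IU)(t)}
   \le \norm{(u-I_1u)(t)}{0,\Omega}^2+\norm{(v-I_2v)(t)}{0,\Omega}^2.
\]
Since $u(t),v(t)\in H^k(\Omega)$, the local interpolation error estimates stated above (applied with $r=s=k$, in their seminorm form) give $\norm{(u-I_1u)(t)}{0,\Omega}\le Ch^k\snorm{u(t)}{k,\Omega}$ and $\norm{(v-I_2v)(t)}{0,\Omega}\le Ch^k\snorm{v(t)}{k,\Omega}$. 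Taking the supremum over $t$ and using $\snorm{u(t)}{k,\Omega}^2+\snorm{v(t)}{k,\Omega}^2\le(\snorm{u(t)}{k,\Omega}+\snorm{v(t)}{k,\Omega})^2$ produces the first term on the right-hand side.

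For the temporal contribution the key observation is that $I=(I_1,I_2)$ is a purely spatial operator and therefore commutes with $\partial_t$, so $\partial_t^{q+2}(IU)=I(\partial_t^{q+2}U)$; in particular $IU\in H_\rho^{q+2}(\mathbb{R};H)$ by the assumed $H_\rho^{q+2}(\mathbb{R};L^2(\Omega)\times L^2(\Omega))$-regularity of $U$, and $IU$ is continuous in time. I would then apply Lemma~\ref{lem:inter_scp} verbatim with $U$ replaced by $IU$ to obtain
\[
 \sup_{t\in[0,T]}\scp{M_0(IU-P(IU))(t),(IU-P(IU))(t)}
   \le C\tau^{2(q+1)}|\partial_t^{q+2}IU|_\rho^2,
\]
which is exactly the second term claimed.

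The main subtlety is ensuring the clean separation of the space and time errors: the time-interpolation operator $P$ must be applied to the already spatially interpolated, hence continuous-in-time, function $IU$ so that Lemma~\ref{lem:inter_scp} is applicable, and one must use that time differentiation commutes with the spatial interpolation in order to control $|\partial_t^{q+2}IU|_\rho$. Everything else is a routine combination of the recalled interpolation estimates with the Sobolev embedding implicit in the pointwise-in-time bounds.
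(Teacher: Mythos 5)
Your proposal is correct and follows essentially the same route as the paper: the same splitting $U-PIU=(U-IU)+(IU-PIU)$, the same use of the selfadjointness and non-negativity of $M_0$ to separate the two contributions, the spatial interpolation estimates for the first term and Lemma~\ref{lem:inter_scp} applied to $IU$ for the second. Your explicit remark that $I$ commutes with $\partial_t$ (so that $IU$ inherits the required temporal regularity) makes precise a point the paper leaves implicit, but the argument is the same.
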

      \begin{proof}
         The operator $M_0$ is selfadjoint and non-negative.          Thus it follows that
         \begin{align*}
             &\langle M_0(U-PIU)(t),(U-PIU)(t)\rangle_{L^2(\Omega)}\\
             &=|\sqrt{M_0}(U-PIU)(t)|_{L^2(\Omega)}^2\\
             &\leq 2 \left(|\sqrt{M_0}(U-IU)(t)|^2_{L^2(\Omega)}+|\sqrt{M_0}(IU-PIU)(t)|^2_{L^2(\Omega)} \right)
         \end{align*}
         for each $t\in [0,T]$. The second term can be estimated by
         \[
         |\sqrt{M_0}(IU-PIU)(t)|^2_{L^2(\Omega)}\leq C\tau^{2(q+1)}|\partial_t^{q+2}IU|_\rho^2
         \]
according to Lemma \ref{lem:inter_scp}, while the first term can be estimated by
\[
|\sqrt{M_0}(U-IU)(t)|^2_{L^2(\Omega)}\leq C h^{2k}\left(|u(t)|^2_{k,\Omega}+|v(t)|^2_{k,\Omega}\right),
\]
due to the boundedness of $\sqrt{M_0}$. Hence, the assertion follows.
      \end{proof}

      \begin{lem}\label{lem:full_interp4}
          For $U=(u,v)\in H_\rho^1(\mathbb{R};H^{k}(\Omega)\times H^k(\Omega))\cap H_\rho^{q+3}(\mathbb{R};L^2(\Omega)\times L^2(\Omega))$ we get
\[
              \tnorm{\partial_t M_0(U-\widehat{P}IU)}_{Q,\rho}^2
                  \leq C\Bigg(h^{2k}\left(
                                          \norm{\partial_t u}{Q,\rho,k,\Omega}+
                                          \norm{\partial_t v}{Q,\rho,k,\Omega}
                                       \right)^2
                                  +\tau^{2(q+1)}|\partial_t^{q+3}IU|_\rho
                       \Bigg).
  \]
          \end{lem}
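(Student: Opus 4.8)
The plan is to split the error into a spatial and a temporal part and to estimate each using results already at hand. Since the coefficient $M_0$ from Example~\ref{ex:CTS} is multiplication by characteristic functions, it acts only in space and therefore commutes with $\partial_t$, with the temporal interpolant $\widehat{P}$, and componentwise with the spatial interpolant $I=(I_1,I_2)$. I would begin by writing
\[
 U-\widehat{P} I U = (U-IU) + (IU-\widehat{P} IU),
\]
apply $\partial_t M_0$, and use the triangle inequality for $\tnorm{\cdot}_{Q,\rho}$ together with $(a+b)^2\le 2a^2+2b^2$, thereby reducing the claim to separate bounds for the spatial contribution $\tnorm{\partial_t M_0(U-IU)}_{Q,\rho}^2$ and the temporal contribution $\tnorm{\partial_t M_0(IU-\widehat{P} IU)}_{Q,\rho}^2$.

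For the temporal term I would use $M_0\widehat{P}=\widehat{P} M_0$ to rewrite $M_0(IU-\widehat{P} IU)=V-\widehat{P} V$ with $V\coloneqq M_0 IU$. Because $U\in H_\rho^{q+3}(\mathbb{R};L^2(\Omega)\times L^2(\Omega))$ and $I$ commutes with $\partial_t$ while mapping into the (finite-dimensional, hence $L^2$) trial space, $V$ inherits the required time regularity, so that Lemma~\ref{lem:dthatP} gives
\[
 \tnorm{\partial_t M_0(IU-\widehat{P} IU)}_{Q,\rho}\le C\tau^{q+1}|\partial_t^{q+3}(M_0 IU)|_\rho\le C\|M_0\|\tau^{q+1}|\partial_t^{q+3}IU|_\rho.
\]
Squaring then furnishes the temporal, $\tau^{2(q+1)}$-order term of the asserted bound.

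For the spatial term I would commute $\partial_t$ past the time-independent operators, $\partial_t M_0(U-IU)=M_0\big(\partial_t U-I\partial_t U\big)$, so that the integrand is, up to the bounded factor $M_0$, the spatial interpolation error of $\partial_t U=(\partial_t u,\partial_t v)$. Evaluating $\tnorm{\cdot}_{Q,\rho}^2$ through the Gauß--Radau quadrature defining $\Qm{\cdot}$, at each node $\tmi$ I would bound
\[
 |M_0(\partial_t U-I\partial_t U)(\tmi)|_H^2\le \|M_0\|^2\big(\norm{(\partial_t u-I_1\partial_t u)(\tmi)}{0,\Omega}^2+\norm{(\partial_t v-I_2\partial_t v)(\tmi)}{0,\Omega}^2\big)
\]
and insert the local spatial interpolation estimates stated before Lemma~\ref{lem:full_interp1} with $r=s=k$; this order is exactly what the hypothesis $U\in H_\rho^1(\mathbb{R};H^k(\Omega)\times H^k(\Omega))$ permits, since then $\partial_t u,\partial_t v\in H^k(\Omega)$ for almost every time. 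Reassembling the weighted nodal sums according to the definition of $\norm{\cdot}{Q,\rho,k,\Omega}$ then yields the factor $h^{2k}\big(\norm{\partial_t u}{Q,\rho,k,\Omega}+\norm{\partial_t v}{Q,\rho,k,\Omega}\big)^2$.

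The only genuinely delicate point is the operator bookkeeping in the splitting: one must verify that $M_0$, being the characteristic-function multiplier of Example~\ref{ex:CTS}, truly commutes with both $\widehat{P}$ and $I$, and that replacing $\partial_t(IU)$ by $I(\partial_t U)$ is legitimate by the linearity and time-independence of the spatial interpolant. Once these commutations are in place the two mechanisms decouple cleanly and each reduces to an already-established estimate---Lemma~\ref{lem:dthatP} for the temporal part and the local spatial interpolation bounds for the spatial part---so that no new analytic ingredient is required.
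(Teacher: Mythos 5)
Your proposal is correct and follows essentially the same route as the paper: the splitting $U-\widehat{P}IU=(U-IU)+(IU-\widehat{P}IU)$, the triangle inequality, Lemma~\ref{lem:dthatP} applied to $V=M_0IU$ for the temporal part, and the spatial interpolation estimates (which is precisely the content of Lemma~\ref{lem:full_interp1}, applied to $\partial_t U$) for the spatial part. The commutation bookkeeping you flag is exactly what the paper uses implicitly, so no new ingredient is missing.
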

      \begin{proof}
         We have that
         \begin{align*}
            \tnorm{\partial_t M_0(U-\widehat{P}IU)}_{Q,\rho}
               &\leq  \tnorm{\partial_t M_0(U-IU)}_{Q,\rho}
                     +\tnorm{\partial_t (M_0IU-\widehat{P} M_0IU)}_{Q,\rho}\\
               &\leq \tnorm{M_0(\partial_t U-I\partial_t U)}_{Q,\rho}
                     +C\tau^{q+1}|\partial_t^{q+3}IU|_\rho,
         \end{align*}
         by Lemma~\ref{lem:dthatP}. For the first term we have by Lemma \ref{lem:full_interp1}
         \begin{equation*}
              \tnorm{M_0(\partial_t U-I\partial_t U)}_{Q,\rho}^2
                 \leq C  h^{2k}
                                 \left(
                                    \norm{\partial_t u}{Q,\rho,k,\Omega}^2+
                                    \norm{\partial_t v}{Q,\rho,k,\Omega}^2
                                 \right).
                        \qedhere
         \end{equation*}       
      \end{proof}

      \begin{lem}\label{lem:full_interp5}
         It holds for $U=(u,v)\in H_\rho^{q+2}(\mathbb{R};L^2(\Omega)\times L^2 (\Omega))$
         \[
              \max_{1\leq m\leq M}\left\{
                                      |M_0(PIU-IU)(t_{m-1}^+)|_{L^2(\Omega)}\e^{-\rho t_{m-1}}
                                  \right\}
               \leq C\tau^{q+1}
                                  |\partial_t^{q+2}IU|_\rho.                        
          \]
          
      \end{lem}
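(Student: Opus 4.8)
The plan is to recognise that $M_0(PIU-IU)$ is, up to a sign, the purely temporal interpolation error of the function $V\coloneqq M_0 IU$, and then to invoke Lemma~\ref{lem:jump}. First I would exploit that $M_0$ is a purely spatial, hence time-independent, operator, so that it commutes both with the temporal interpolation operator $P$ and with $\partial_t$. In particular $M_0 P IU = P(M_0 IU)$, whence
\[
   M_0(PIU-IU) = P(M_0 IU) - M_0 IU = -\bigl(V - PV\bigr),\qquad V\coloneqq M_0 IU .
\]
This identity reduces the mixed spatial-temporal object on the left-hand side to the purely temporal interpolation error already analysed in Subsection~\ref{su:ip}, so that $IU$ may be treated as a single $H$-valued function whose temporal smoothness is all that matters.

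Next I would apply Lemma~\ref{lem:jump} to $V = M_0 IU$, which yields, for each $m\in\{1,\ldots,M\}$,
\[
   |M_0(PIU-IU)(t_{m-1}^+)|_{L^2(\Omega)}
      = |(V-PV)(t_{m-1}^+)|_{L^2(\Omega)}
      \leq C\tau_m^{q+1}|\partial_t^{q+2}V|_\rho .
\]
Using once more that $M_0$ commutes with $\partial_t$ and is bounded, I would estimate $|\partial_t^{q+2}V|_\rho = |M_0\partial_t^{q+2}IU|_\rho \leq \|M_0\|\,|\partial_t^{q+2}IU|_\rho$. Finally, multiplying by $\e^{-\rho t_{m-1}}\leq 1$ (valid since $t_{m-1}\geq 0$ and $\rho>0$), bounding $\tau_m\leq\tau$, and taking the maximum over $m$ gives the asserted estimate, with the factor $\|M_0\|$ absorbed into the generic constant $C$, which, per the conventions of this section, is allowed to depend on $\|M_0\|$.

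The computation is essentially routine once the commutation identity is in place; the only delicate point is the bookkeeping of the exponential weights. The $\e^{-\rho t_{m-1}}$ factor on the left is harmless here, being bounded by $1$, but it is exactly the factor that compensates the $\e^{\rho t}$ growth hidden in the $\rho$-weighted Sobolev embedding invoked inside Lemma~\ref{lem:jump}, and keeping track of it is what renders the max-in-$m$ bound uniform. I therefore expect the main (minor) obstacle to be checking that $V=M_0 IU$ indeed enjoys enough temporal regularity for Lemma~\ref{lem:jump} to apply, i.e.\ that $\partial_t^{q+2}IU$ is well defined with $\partial_t^{q+2}IU = I\,\partial_t^{q+2}U$; this follows because the spatial interpolation $I$ is time-independent and linear, so $IU$ inherits the temporal smoothness of $U$.
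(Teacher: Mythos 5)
Your proof is correct and follows exactly the paper's route: the paper's own proof is the single sentence that the lemma ``is a direct consequence of Lemma~\ref{lem:jump}'', and your argument simply fills in the details of that application (commuting the spatial operators $M_0$ and $I$ past $P$ and $\partial_t$, invoking Lemma~\ref{lem:jump} with $V=M_0IU$, bounding $\e^{-\rho t_{m-1}}\leq 1$ and absorbing $\|M_0\|$ into the generic constant). Nothing to add.
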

      \begin{proof}
         This is a direct consequence of Lemma~\ref{lem:jump}.
      \end{proof}

      Lemma~\ref{lem:full_interp1} to \ref{lem:full_interp5} give us all needed estimates 
      for the final convergence result for Example~\ref{ex:CTS}.

      \begin{thm}\label{thm:cts}
         We assume for the solution $U=(u,v)$ of Example \ref{ex:CTS} the regularity 
         
         \[
         U\in H_\rho^{1}(\mathbb{R};H^k(\Omega)\times H^k(\Omega))\cap H_\rho^{q+3}(\mathbb{R};L^2(\Omega)\times L^2(\Omega)) 
         \]
as well as 
\[
AU\in H_\rho(\mathbb{R}; H^k(\Omega)\times H^k(\Omega)).
\]
         Then we have for the error of the numerical solution by \eqref{eq:discr_quad_form}
         \[
             \sup_{t\in[0,T]}\scp{M_0(U-U_h^\tau)(t),(U-U_h^\tau)(t)}+
             \tnorm{U-U_h^\tau}_{Q,\rho}^2
             \leq C (\tau^{2(q+1)} + T h^{2k}).
         \]
      \end{thm}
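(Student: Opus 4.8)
The plan is to reduce the total error $U-U_h^\tau$ to the two contributions already controlled by the preceding group of lemmas: the pure interpolation error $U-PIU$ and the discrete error $PIU-U_h^\tau$. Writing
\[
   U-U_h^\tau=(U-PIU)+(PIU-U_h^\tau),
\]
using $\scp{M_0 w,w}=|\sqrt{M_0}w|_H^2$, the triangle inequality for the seminorm $\tnorm{\cdot}_{Q,\rho}$ (whose quadrature weights are positive), and $(a+b)^2\le 2a^2+2b^2$, I would first split
\begin{multline*}
   \sup_{t\in[0,T]}\scp{M_0(U-U_h^\tau)(t),(U-U_h^\tau)(t)}
      \leq 2\sup_{t\in[0,T]}\scp{M_0(U-PIU)(t),(U-PIU)(t)}\\
      +2\sup_{t\in[0,T]}\scp{M_0(PIU-U_h^\tau)(t),(PIU-U_h^\tau)(t)},
\end{multline*}
together with the analogous split of $\tnorm{U-U_h^\tau}_{Q,\rho}^2$. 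It then suffices to bound the interpolation part and the discrete part separately.

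For the discrete part I would invoke the fully discrete a-priori estimate \eqref{eq:full_estimate}, which already bounds $\sup_t\scp{M_0(PIU-U_h^\tau)(t),(PIU-U_h^\tau)(t)}+\tnorm{PIU-U_h^\tau}_{Q,\rho}^2$ by its four right-hand side terms. These I would estimate one by one: the $\partial_t M_0(U-\widehat{P}IU)$ term by Lemma~\ref{lem:full_interp4}, the $M_1(U-PIU)$ term by Lemma~\ref{lem:full_interp2}, the $A(U-PIU)$ term by the second estimate of Lemma~\ref{lem:full_interp1}, and the $\max_m$ jump term by Lemma~\ref{lem:full_interp5}. The remaining interpolation part is handled by the first estimate of Lemma~\ref{lem:full_interp1} for $\tnorm{U-PIU}_{Q,\rho}^2$ and by Lemma~\ref{lem:full_interp3} for $\sup_t\scp{M_0(U-PIU)(t),(U-PIU)(t)}$.

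It then remains to collect the resulting powers of $h$ and $\tau$ into the generic constant $C$. Every spatial interpolation contribution carries a factor $h^{2k}$; since the norms $\tnorm{\cdot}_{Q,\rho}^2$ and $\norm{\cdot}{Q,\rho,k,\Omega}^2$ are integrated in time, summation over the $M$ subintervals produces the factor $\sum_{m=1}^M\tau_m=T$, yielding the term $Th^{2k}$. The temporal interpolation contributions entering through Lemmas~\ref{lem:full_interp3}, \ref{lem:full_interp4} and~\ref{lem:full_interp5} all carry the factor $\tau^{2(q+1)}$. The Sobolev norms of $U$ appearing on the right-hand sides, such as $\norm{\partial_t u}{Q,\rho,k,\Omega}$, $\norm{\Div v}{Q,\rho,k,\Omega}$, $\sup_t(\snorm{u(t)}{k,\Omega}+\snorm{v(t)}{k,\Omega})$ and $|\partial_t^{q+3}IU|_\rho$, are finite under the stated regularity and are absorbed into $C$, which gives exactly $C(\tau^{2(q+1)}+Th^{2k})$.

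The main obstacle is to verify that the hypotheses genuinely suffice to make every right-hand side norm finite and mesh-independently controllable. Two points need care. First, the assumption $AU\in H_\rho(\mathbb{R};H^k(\Omega)\times H^k(\Omega))$ must be read, via $A(u,v)=(\Div v,\grad_0 u)$, as $u\in H^{k+1}(\Omega)$ and $\Div v\in H^k(\Omega)$ in the time-integrated sense, which is precisely what the second estimate of Lemma~\ref{lem:full_interp1} requires. Second, the temporal terms $|\partial_t^{q+3}IU|_\rho$ must be bounded by a norm of $U$: since the spatial interpolant $I$ acts only in the spatial variable it commutes with $\partial_t$, so $\partial_t^{q+3}IU=I\partial_t^{q+3}U$, and one then needs the stability of the Scott--Zhang and Raviart--Thomas moment interpolants in the norm provided by the regularity of $\partial_t^{q+3}U$. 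This commutation-and-stability interplay between the spatial and temporal regularity assumptions is the delicate point; once it is settled, the collection of $h$- and $\tau$-powers is purely a bookkeeping matter and the asserted estimate follows.
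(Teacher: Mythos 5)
Your proposal is correct and follows exactly the route the paper intends: the paper gives no written proof of Theorem~\ref{thm:cts}, but the sentence preceding it makes clear that the argument is precisely your combination of the splitting $U-U_h^\tau=(U-PIU)+(PIU-U_h^\tau)$, the fully discrete estimate \eqref{eq:full_estimate}, and Lemmas~\ref{lem:full_interp1}--\ref{lem:full_interp5}. Your remark about commuting the spatial interpolant with $\partial_t$ and invoking its $L^2$-stability to turn $|\partial_t^{q+3}IU|_\rho$ into a norm of $U$ itself is a point the paper silently glosses over, and making it explicit is a genuine (if minor) improvement.
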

  \section{Numerical examples}\label{s:ne}
  In the following section we consider some examples to verify numerically our
  theoretical findings.
  
  \subsection{Changing type system -- one space dimension}
  Let $\Omega=(-\frac{\pi}{2},\frac{\pi}{2})\subset\R^1$,
  $\Omega_h=(-\frac{\pi}{2},0)$ and $\Omega_p=(0,\frac{\pi}{2})$. 
  The problem is given on $\R\times\Omega$ by
  \begin{subequations}\label{eq:prob1}
  \begin{gather}\label{eq:prob1_pde}
     \left( 
      \partial_t
      \begin{pmatrix}
       1 & 0\\
       0 & \id{h}
      \end{pmatrix}
      +\begin{pmatrix}
        0 & 0\\
        0 & \id{p}
       \end{pmatrix}
      +\begin{pmatrix}
        0 & \partial_x\\
        \partial_x & 0
       \end{pmatrix}
     \right)\begin{pmatrix}
             u\\
             v
            \end{pmatrix}
       =\begin{pmatrix}
               f\\
               g
        \end{pmatrix}
  \end{gather}
  with $u(t,-\frac{\pi}{2})=u(t,\frac{\pi}{2})=0$, $g=0$ and
  \begin{gather}
   f(t,x)=\id{\R_{\geq 0}}(t)(2\e^t-1-t\id{\left( -\frac{\pi}{2},0\right)}(x))\cos(x).
  \end{gather}
  \end{subequations}
  The solution can be derived as
  \begin{align*}
   u(t,x) &= \id{\R_{\geq 0}}(t)(\e^t-1)\cos(x),\\
   v(t,x) &= \id{\R_{\geq 0}}(t)(\e^t-1-t\id{\left( -\frac{\pi}{2},0\right)}(x))\sin(x).
  \end{align*}
  Note that a priori, we impose no transmission condition. However, as in \cite[Remark 3.2]{W_StH16}, they can be derived for $u$ satisfying \eqref{eq:prob1} as
  \[
   u(t,0+) = u(t,0-),\quad
   \partial_x u(t,0+) = \int_0^t\partial_x u(s,0-)ds.
  \]
  The solution up to a time $T=1$ is shown in Figure~\ref{fig:prob1}.
  \begin{figure}[ht]
     \begin{center}
      \includegraphics[width=0.4\textwidth]{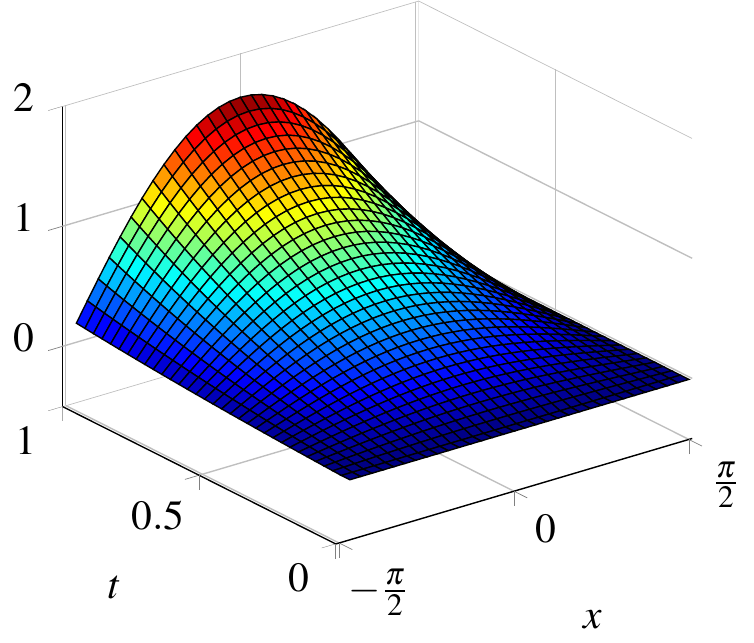}\quad
      \includegraphics[width=0.4\textwidth]{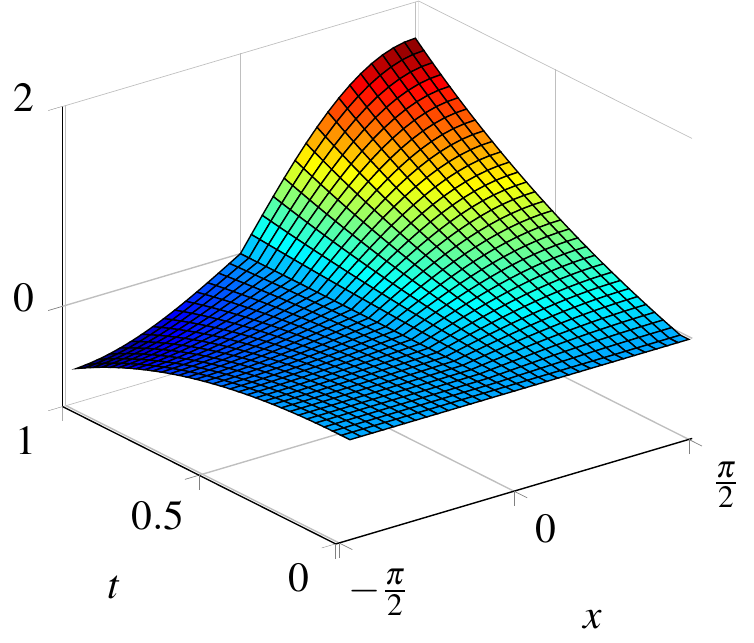}
     \end{center}
     \caption{Solution $u$ (left) and $v$ (right) of problem \eqref{eq:prob1}\label{fig:prob1}}
  \end{figure}
  For the numerical solution we use again $T=1$, an equidistant mesh of $M$ cells in time and an equidistant mesh
  of $N$ cells in space. In order to resolve the boundary $S=\overline\Omega_h\cap\overline\Omega_p=\{0\}$ we assume
  $N$ to be even. Note that we can use $\rho=1$ for the given solution $u$. 
  
  Defining
  \[
    E_{\sup}(v) = \left( \sup_{t\in[0,T]}\scp{M_0v(t),v(t)}\right)^{1/2},\quad
    E(v)   = \left( \sup_{t\in[0,T]}\scp{M_0v(t),v(t)}+\tnorm{v}_{Q,\rho}^2\right)^{1/2}
  \]
  we consider in Table~\ref{tab:prob1}
  \begin{table}[ht]
     \caption{Convergence results for $U-U_h$ of problem \eqref{eq:prob1}
              \label{tab:prob1}}
     \begin{center}   
      \begin{tabular}{rllllll}
         $N=M$ & 
         \multicolumn{2}{c}{$E_{\sup}(U-U_h)$} &
         \multicolumn{2}{c}{$\tnorm{U-U_h}_{Q,\rho}$} & 
         \multicolumn{2}{c}{$\tnorm{U-U_h}_{\rho}$}\\
         \hline
         \multicolumn{7}{c}{$p=2$, $q=1$}\\
         \hline
           8 & 8.727e-03 &      &  7.766e-04 &      &  1.855e-03 &     \\
          16 & 2.335e-03 & 1.90 &  1.939e-04 & 2.00 &  4.638e-04 & 2.00\\
          32 & 6.039e-04 & 1.95 &  4.851e-05 & 2.00 &  1.160e-04 & 2.00\\
          64 & 1.535e-04 & 1.98 &  1.213e-05 & 2.00 &  2.899e-05 & 2.00\\
         128 & 3.871e-05 & 1.99 &  3.032e-06 & 2.00 &  7.248e-06 & 2.00\\
         256 & 9.717e-06 & 1.99 &  7.580e-07 & 2.00 &  1.812e-06 & 2.00\\
         512 & 2.434e-06 & 2.00 &  1.895e-07 & 2.00 &  4.530e-07 & 2.00\\
         \hline
         \multicolumn{7}{c}{$p=3$, $q=2$}\\
         \hline
           8 & 6.963e-05 &      &  3.079e-06 &      &  1.717e-05 &     \\
          16 & 8.705e-06 & 3.00 &  1.898e-07 & 4.02 &  2.120e-06 & 3.02\\
          32 & 1.088e-06 & 3.00 &  1.182e-08 & 4.00 &  2.642e-07 & 3.00\\
          64 & 1.360e-07 & 3.00 &  7.383e-10 & 4.00 &  3.300e-08 & 3.00\\
         128 & 1.700e-08 & 3.00 &  4.614e-11 & 4.00 &  4.124e-09 & 3.00\\
         256 & 2.125e-09 & 3.00 &  2.883e-12 & 4.00 &  5.155e-10 & 3.00\\
         512 & 2.657e-10 & 3.00 &  1.803e-13 & 4.00 &  6.444e-11 & 3.00
      \end{tabular}
     \end{center}
  \end{table}
  the convergence behaviour of $U_h$ for $N=M$ and polynomial 
  degrees $q=p+1=2$ and $q=p+1=3$.
  Note that we also show the norm $\tnorm{U-U_h}_\rho$ estimated by a refined 
  quadrature rule in the last columns.
  The estimated rates of convergence support our theoretical result in 
  Theorem~\ref{thm:cts}, that the error $E$ is of order $\min\{p,q+1\}$. 
  For odd polynomial degrees $p$ the component $\tnorm{U-U_h}_{Q,\rho}$ shows a convergence 
  order of one order higher, hinting at a superconvergence property.
  
  In Table~\ref{tab:prob1_2}     
  \begin{table}[ht]
     \caption{Convergence rates for $E(U-U_h)$ of problem \eqref{eq:prob1} 
              and several polynomial orders\label{tab:prob1_2}}
     \begin{center}   
      \begin{tabular}{r|ccccc}
          $p\setminus q$   & 1 & 2 & 3 & 4 & 5\\\hline
                         1 & 2 & 2 & 2 & 2 & 2\\
                         2 & 2 & 2 & 2 & 2 & 2\\
                         3 & 2 & 3 & 4 & 4 & 4\\
                         4 & 2 & 3 & 4 & 4 & 4\\
                         5 & 2 & 3 & 4 & 5 & 6
      \end{tabular}
     \end{center}
  \end{table}
  the estimated convergence rates for all combinations of polynomial degrees
  $\{p,\,q\}\subseteq\{1,\dots,5\}$ are given. Clearly the rates for even $p$ follow the 
  predicted $\min\{p,q+1\}$, while for odd $p$ the rates are $\min\{p+1,q+1\}$.
  Thus there might be dragons\footnote{superconvergence phenomena}.
  
  Let us modify problem \eqref{eq:prob1}, by taking $\Omega=\left[-\frac{3\pi}{2},\frac{3\pi}{2}\right]$,
  $\Omega_h= \left[-\frac{3\pi}{2},0\right]$, $\Omega_p=\left[0,\frac{3\pi}{2}\right]$
  and right-hand sides only in $L^2$.
  To be more precise, let
  \begin{align*}
   f(t,x) &= \id{\R_{\geq0}}(t)\bigg(
                   -(2\e^t-t-1)\id{\left( -\frac{\pi}{2},0\right)}(x)\cos(x)+
                       \e^t\left( \id{\left( \frac{\pi}{2},\frac{3\pi}{2}\right)}(x)-\id{\left(0,\frac{\pi}{2}\right)}(x)\right)\cos(x)+\\&\hspace*{2.5cm}
                       \id{(0,\pi)}(x)-\id{\left(\pi,\frac{3\pi}{2}\right)}(x)\bigg),\\
   g(t,x) &= \id{\R_{\geq0}}(t)\bigg( 
                     \id{(0,\pi)}(x)x+\id{\left( \pi,\frac{3\pi}{2}\right)}(x)(2\pi-x)\\&\hspace*{2.5cm}
                     -(\e^t-1)(\id{\left( \frac{\pi}{2},\frac{3\pi}{2}\right)}(x) -\id{\left( 0,\frac{\pi}{2}\right)}(x))\sin(x)
              \bigg).
  \end{align*}
  Figure~\ref{fig:prob2_fg}
  \begin{figure}[ht]
    \begin{center}
     \includegraphics[width=0.4\textwidth]{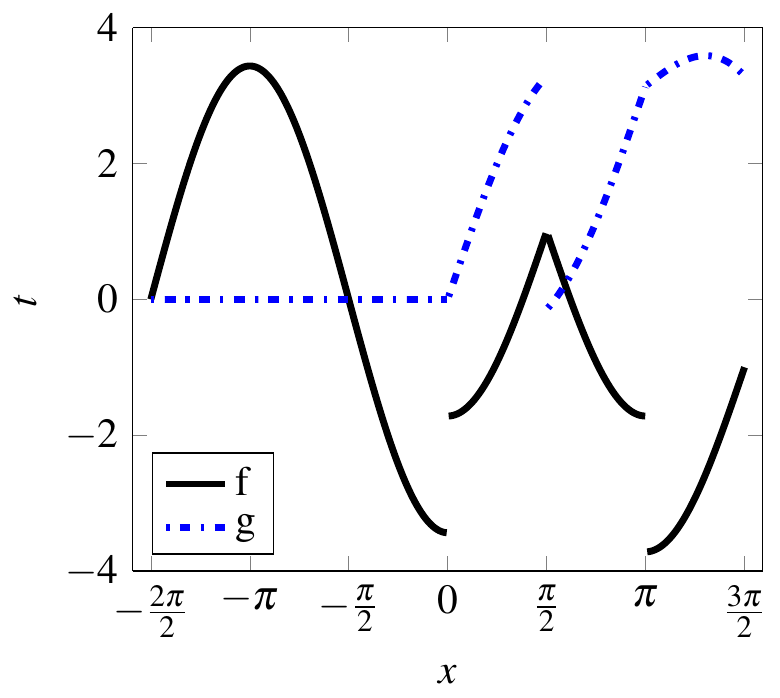}
    \end{center}
    \caption{Right-hand sides $f$ and $g$ of modified problem \eqref{eq:prob1}\label{fig:prob2_fg}}
  \end{figure}
  shows the right-hand sides $f$ and $g$ for $t=1$.
  
  Again the exact solution can be found and is given by
  \begin{align*}
      u(t,x) &= \id{\R_{\geq0}}(t)(\e^t-1)(\id{\left( \frac{\pi}{2},\frac{3\pi}{2}\right)}(x)-\id{\left(-\frac{3\pi}{2},\frac{\pi}{2}\right)}(x))\cos(x),\\
      v(t,x) &= \id{\R_{\geq0}}(t)\left( 
                     -(\e^t-t-1)\id{\left( -\frac{3\pi}{2},0\right) }(x)\sin(x)+
                     \id{(0,\pi)}(x)x+\id{\left( \pi,\frac{3\pi}{2}\right)}(x)(2\pi-x)
                \right) .
  \end{align*}
  Note that $u$ and $v$ are non-differentiable, but piece-wise smooth.
  Figure~\ref{fig:prob2_uv}
  \begin{figure}[ht]
    \begin{center}
     \includegraphics[width=0.4\textwidth]{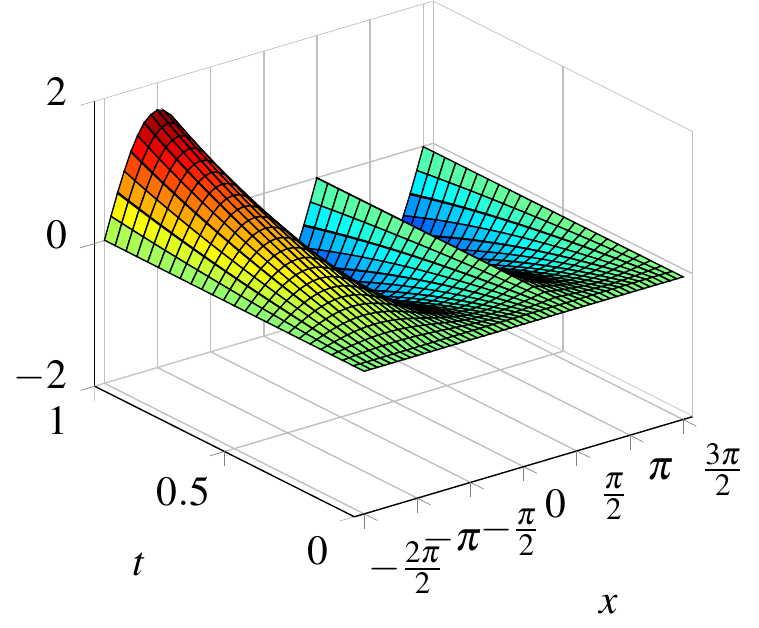}\quad
     \includegraphics[width=0.4\textwidth]{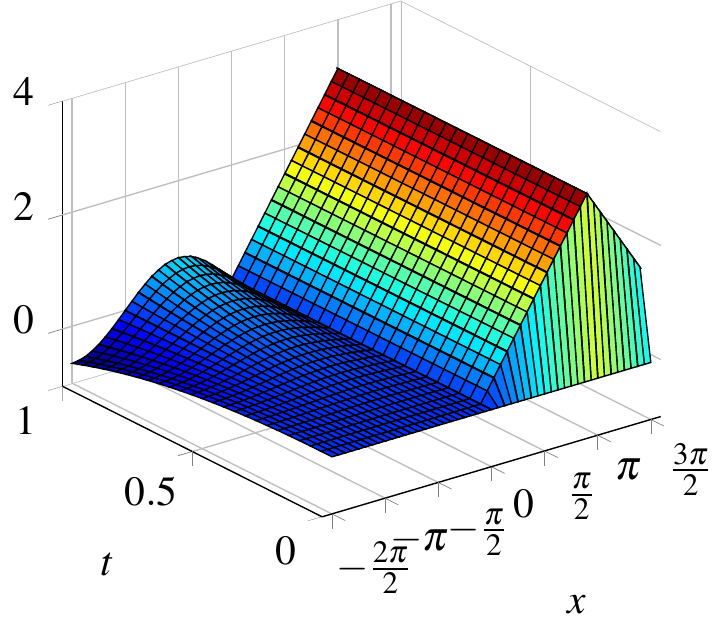}
    \end{center}
    \caption{Solution $u$ (left) and $v$ (right) of the modified problem \eqref{eq:prob1}\label{fig:prob2_uv}}
  \end{figure}
  shows the solutions for $t\in[0,1]$.
  
  Note that a priori, we impose no transmission condition. However, as in 
  \cite[Remark 3.2]{W_StH16}, they can be derived for $u$ satisfying 
  \eqref{eq:prob1} as
  \[
    u(t,0+) = u(t,0-),\quad
    \partial_x u(t,0+) = \int_0^t\partial_x u(s,0-)ds.
  \]
  For the numerical solution we use $T=1$, an equidistant mesh of $M$ 
  cells in time and an equidistant mesh of $N$ cells in space, thus $\tau=1/M$ and $h=1/N$. 
  In order to capture the jumps of $f$ and $g$, and to resolve the boundary 
  $S=\overline\Omega_h\cap\overline\Omega_p=\{0\}$ we use an equidistant mesh in 
  space with the number of cells $N$ divisible by 6.
  Note that we can use $\rho=1$ for the given solution $u$. 
  
  Defining
  \[
    E_{\sup}(v)^2 \coloneqq \sup_{t\in[0,T]}\scp{M_0v(t),v(t)},\quad
    E(v)^2        \coloneqq \sup_{t\in[0,T]}\scp{M_0v(t),v(t)}+\tnorm{v}_{Q,\rho}^2
  \]
  we consider in Table~\ref{tab:prob2}
  \begin{table}[ht]
     \caption{Convergence results for $U-U_h$ of problem modified \eqref{eq:prob1}
              \label{tab:prob2}}
     \begin{center}   
      \begin{tabular}{rllllll}
         $N=M$ & 
         \multicolumn{2}{c}{$E_{\sup}(U-U_h)$} &
         \multicolumn{2}{c}{$\tnorm{U-U_h}_{Q,\rho}$} & 
         \multicolumn{2}{c}{$\tnorm{U-U_h}_{\rho}$}\\
         \hline
         \multicolumn{7}{c}{$p=2$, $q=1$}\\
         \hline
          12 & 2.159e-02 &      &  3.953e-03 &      &  4.110e-03 &     \\
          24 & 5.490e-03 & 1.98 &  1.017e-03 & 1.96 &  1.055e-03 & 1.96\\
          48 & 1.409e-03 & 1.96 &  2.557e-04 & 1.99 &  2.651e-04 & 1.99\\
          96 & 3.577e-04 & 1.98 &  6.400e-05 & 2.00 &  6.637e-05 & 2.00\\
         192 & 9.010e-05 & 1.99 &  1.601e-05 & 2.00 &  1.660e-05 & 2.00\\
         384 & 2.261e-05 & 1.99 &  4.002e-06 & 2.00 &  4.150e-06 & 2.00\\
         768 & 5.662e-06 & 2.00 &  1.001e-06 & 2.00 &  1.037e-06 & 2.00\\
         \hline
         \multicolumn{7}{c}{$p=3$, $q=2$}\\
         \hline
          12 & 1.334e-04 &      &  2.629e-05 &      &  2.734e-05 &     \\
          24 & 5.921e-06 & 4.49 &  7.802e-07 & 5.07 &  1.220e-06 & 4.49\\
          48 & 5.585e-07 & 3.41 &  2.408e-08 & 5.02 &  1.197e-07 & 3.35\\
          96 & 6.981e-08 & 3.00 &  7.500e-10 & 5.00 &  1.468e-08 & 3.03\\
         192 & 8.726e-09 & 3.00 &  2.343e-11 & 5.00 &  1.833e-09 & 3.00\\
         384 & 1.091e-09 & 3.00 &  7.329e-13 & 5.00 &  2.291e-10 & 3.00\\
         768 & 1.363e-10 & 3.00 &  2.474e-14 & 4.89 &  2.864e-11 & 3.00
      \end{tabular}
     \end{center}
  \end{table}
  \begin{table}[ht]
     \caption{Convergence rates for $E(U-U_h)$ of the modified problem \eqref{eq:prob1} 
              and several polynomial orders\label{tab:prob2_2}}
     \begin{center}   
      \begin{tabular}{r|ccccc}
          $p\setminus q$   & 1 & 2 & 3 & 4 & 5\\\hline
                         1 & 2 & 3 & 3 & 3 & 3\\
                         2 & 2 & 2 & 2 & 2 & 2\\
                         3 & 2 & 3 & 5 & 5 & 5\\
                         4 & 2 & 3 & 4 & 4 & 4\\
                         5 & 2 & 3 & 4 & 7 & 7
      \end{tabular}
     \end{center}
  \end{table}
   we observe a convergence behaviour similar to the previous smooth case.

  \subsection{Changing type system -- two space dimensions}
  This time we consider a problem with unknown solution.
  Let $\Omega=(0,1)^2\subset\R^2$, $\Omega_h=\left( \frac{1}{4},\frac{3}{4}\right)^2$, 
  $\Omega_e=\Omega\setminus\bar\Omega_h$ and $\Omega_p=\emptyset$. 
  The problem is given on $(0,T)\times\Omega$ by
  \begin{gather}\label{eq:prob3_pde}
     \left( 
      \partial_t
      \begin{pmatrix}
       1 & 0\\
       0 & \id{h}
      \end{pmatrix}
      +\begin{pmatrix}
        0 & 0\\
        0 & \id{p}
       \end{pmatrix}
      +\begin{pmatrix}
           0 & \Div \\
           \grad_0 & 0
       \end{pmatrix}
     \right)\begin{pmatrix}
             u\\
             v
            \end{pmatrix}
       =\begin{pmatrix}
               f\\
               0
        \end{pmatrix},
  \end{gather}
  where
  \[
      f(t,x)=2\sin(\pi t)\id{\R_{<1/2}\times\R}(x).
  \]
  For $T=1.875$ Figure~\ref{fig:prob3} 
%
  \begin{figure}[tb]
     \begin{center}
      \includegraphics[width=0.3\textwidth]{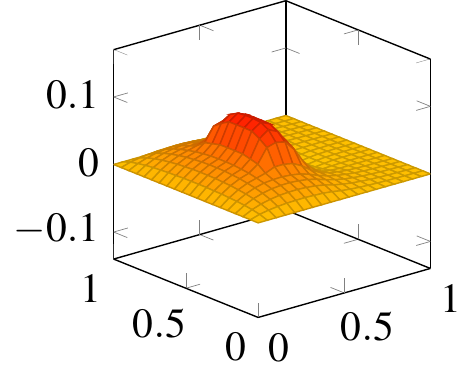}\quad
      \includegraphics[width=0.3\textwidth]{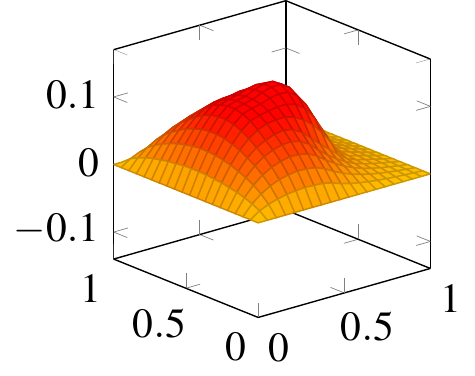}\quad
      \includegraphics[width=0.3\textwidth]{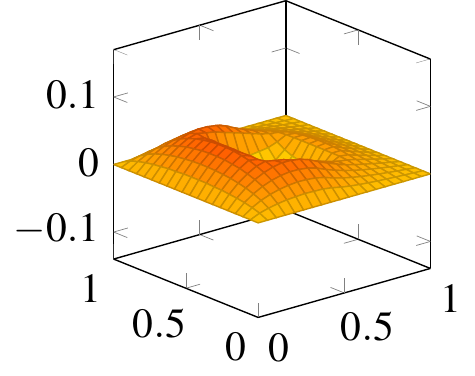}\\
      \includegraphics[width=0.3\textwidth]{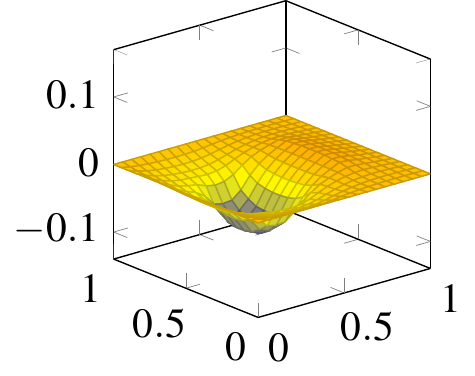}\quad
      \includegraphics[width=0.3\textwidth]{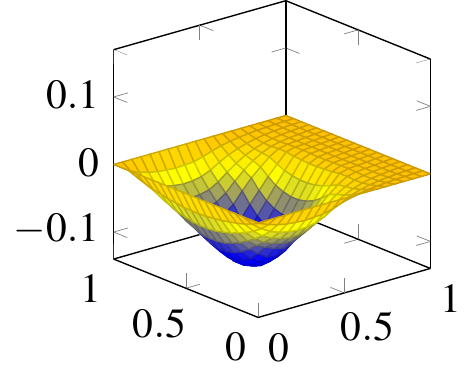}\quad
      \includegraphics[width=0.3\textwidth]{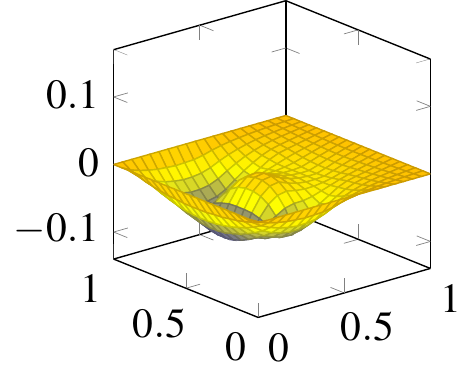}
     \end{center}
     \caption{Solution $u$ at times $t=5k/16$ for $k\in\{1,\dots, 6\}$ (top left to bottom right) of problem \eqref{eq:prob3_pde} for $T=1.875$}\label{fig:prob3}
  \end{figure}
  
   \begin{table}[tb]
     \caption{Convergence results for $\tilde U-U_h$ of problem \eqref{eq:prob3_pde}
              \label{tab:prob3}}
     \begin{center}   
      \begin{tabular}{rllllll}
         $N=M$ & 
         \multicolumn{2}{c}{$E_{\sup}(\tilde U-U_h)$} &
         \multicolumn{2}{c}{$\tnorm{\tilde U-U_h}_{Q,\rho}$} & 
         \multicolumn{2}{c}{$\tnorm{\tilde U-U_h}_{\rho}$}\\
         \hline
         \multicolumn{7}{c}{$p=2$, $q=1$}\\
         \hline
          4 & 1.660e-02 &     &  8.121e-03 &     &  8.703e-03&     \\
          8 & 5.595e-03 &1.57 &  2.425e-03 &1.74 &  2.781e-03& 1.65\\
         16 & 1.666e-03 &1.75 &  7.445e-04 &1.70 &  8.517e-04& 1.71\\
         32 & 5.260e-04 &1.66 &  2.790e-04 &1.42 &  3.012e-04& 1.50\\
         64 & 1.926e-04 &1.45 &  1.300e-04 &1.10 &  1.331e-04& 1.18\\
         \hline
         \multicolumn{7}{c}{$p=3$, $q=2$}\\
         \hline
          4 & 4.895e-03 &      & 1.778e-03 &      & 2.028e-03 &    \\
          8 & 1.117e-03 &2.13  & 5.510e-04 &1.69  & 5.748e-04 &1.82\\
         16 & 4.015e-04 &1.48  & 2.414e-04 &1.19  & 2.419e-04 &1.25\\
         32 & 1.430e-04 &1.49  & 1.175e-04 &1.04  & 1.175e-04 &1.04\\
         64 & 5.245e-05 &1.45  & 5.075e-05 &1.21  & 5.072e-05 &1.21
      \end{tabular}
     \end{center}
  \end{table}
  shows some snapshots of the component $u$ of the solution $U$,
  approximated by a numerical simulation.
  
  In order to investigate the error-behaviour upon refinement of the discretisation, we use 
  a numerically computed reference solution $\tilde U$ instead of the real one $U$. For this we set $T=1$ and use 
  an equidistant mesh of 128$\times$128 rectangular cells in space and 128 cells in time, and 
  polynomial degrees $p=3$ and $q=2$. Thus $u$ is approximated in space by piece-wise $\QS_3$ elements,
  $v$ by $RT_2$-elements and both in time by $\PS_2$-elements.
  In Table~\ref{tab:prob3} we see the results of our numerical simulation 
  for two pairs of polynomial order. We observe, that the error rates are independent of the polynomial order
  and furthermore less than the optimal orders given in Theorem~\ref{thm:cts}. The reason for this
  decrease in convergence order lies in the reduced regularity of the solution to this given problem.
  The interior boundaries where the type of the problem changes introduces corners, where it is very likely for
  singular solution components to arise.

  \section{Appendix -- On the Gau\ss--Radau Quadrature}\label{s:app}
  
  In this appendix we shall gather some results on the right-sided Gau\ss--Radau quadrature, which are known in principle, but are included for the convenience of the reader. We adopted the rational given in \cite{L04}. For this, we introduce a set of weighting functions:
  \[
    W \coloneqq \{ w\in L^1(-1,1): w>0 \text{ a.e.}\}.
  \]
  Note that the bilinear form
  \[
     \langle \cdot,\cdot\rangle_w \colon (f,g)\mapsto \int_{-1}^1 f(x)g(x)w(x) dx
  \]
 introduces a scalar product on its natural domain
  \[
    D\coloneqq \{ f\in L^1_{\mathrm{loc}}(-1,1); \int_{(-1,1)} |f(x)|^2 w(x) \dd x < \infty \}.   
  \] 
  Furthermore, for all $w\in W$ we set $\tilde w \colon x\mapsto (1-x)w(x)$. We observe $\tilde w \in W$. Throughout, let $q\in \N$.
  \begin{deff} Let $w\in W$. A pair $(\omega,r)=((\omega_j)_{j\in\{0,\ldots,q\}},(r_j)_{j\in\{0,\ldots,q\}})\in \R^{q+1}\times [-1,1]^{q+1}$ is called \emph{(right-sided) $w$-Gau\ss--Radau quadrature (of order $q$)}, if $-1\leq r_0\leq r_1 \leq \cdots \leq r_q=1$ and for all $p\in \mathcal{P}_{2q}(-1,1)$ we have
  \[
    \int_{-1}^1p(x)w(x)dx = \sum_{j=0}^q \omega_j p(r_j).
  \]
  \end{deff}
  \begin{prop}\label{p:uni} Let $w\in W$, $(\omega,r)$ a $w$-Gau\ss--Radau quadrature. Then the following properties are satisfied:
  \begin{enumerate}
   \item\label{u1} the set $\{r_j; j\in\{0,\ldots,q\} \}$ consists of $q+1$ elements;
   \item\label{u2} for all $j\in \{0,\ldots,q\}$ we have $0<\omega_j\leq \int_{(-1,1)} w(x)dx$;
   \item\label{u3} for all $j\in \{0,\ldots,q\}$ we have with $I_j(x)\coloneqq \prod_{k\in\{0,\ldots,q\}\setminus\{j\}}\frac{x-r_k}{r_j-r_k}$ \[ \omega_j =\int_{-1}^1I_j(x)w(x)dx; \]
   \item\label{u4} if $(\omega^{(1)},r^{(1)})$ is a $w$-Gau\ss-quadrature, then $(\omega,r)=(\omega^{(1)},r^{(1)})$.
  \end{enumerate}
  \end{prop}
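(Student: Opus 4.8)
The plan is to treat the four assertions in the order stated, using throughout only the exactness of the quadrature on $\mathcal{P}_{2q}(-1,1)$ together with the strict positivity $w>0$ almost everywhere. First I would establish (\ref{u1}). Suppose, for contradiction, that $\{r_0,\ldots,r_q\}$ consisted of at most $q$ distinct points $s_1<\cdots<s_\ell$ with $\ell\leq q$. Then $p(x)\coloneqq\prod_{k=1}^\ell(x-s_k)^2$ has degree $2\ell\leq2q$ and vanishes at every node, so exactness would give $\int_{-1}^1 p(x)w(x)\dd x=\sum_{j=0}^q\omega_j p(r_j)=0$; but $p\geq0$ with $p>0$ off a finite set and $w>0$ a.e., forcing $\int_{-1}^1 p\,w>0$, a contradiction. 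Hence there are exactly $q+1$ distinct nodes.

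With distinctness in hand, the Lagrange polynomials $I_j$ are well defined and of degree $q\leq2q$, so applying exactness to $I_j$ immediately yields $\int_{-1}^1 I_j(x)w(x)\dd x=\sum_i\omega_i I_j(r_i)=\omega_j$, which is (\ref{u3}). For (\ref{u2}) I would test exactness against $I_j^2\in\mathcal{P}_{2q}$ to obtain $\omega_j=\int_{-1}^1 I_j(x)^2 w(x)\dd x>0$ (again since $w>0$ a.e.\ and $I_j\not\equiv0$), and against the constant polynomial $1$ to obtain $\sum_i\omega_i=\int_{-1}^1 w$; combining these gives $0<\omega_j\leq\int_{(-1,1)}w(x)\dd x$.

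The heart of the matter is (\ref{u4}), and the key idea is to characterise the interior nodes as the zeros of an orthogonal polynomial for the modified weight $\tilde w\in W$, $\tilde w(x)=(1-x)w(x)$. I would set $\pi(x)\coloneqq\prod_{j=0}^{q-1}(x-r_j)$, monic of degree $q$, whose zeros are exactly the interior nodes by (\ref{u1}). For any $\phi\in\mathcal{P}_{q-1}(-1,1)$ the polynomial $P(x)\coloneqq\pi(x)\phi(x)(1-x)$ has degree at most $2q$ and vanishes at every node---at $r_0,\ldots,r_{q-1}$ through $\pi$ and at $r_q=1$ through the factor $(1-x)$---so exactness forces $\langle\pi,\phi\rangle_{\tilde w}=\int_{-1}^1\pi(x)\phi(x)(1-x)w(x)\dd x=\sum_j\omega_j P(r_j)=0$. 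Thus $\pi$ is orthogonal to all of $\mathcal{P}_{q-1}(-1,1)$ with respect to $\langle\cdot,\cdot\rangle_{\tilde w}$, and being monic of degree $q$ it must coincide with the unique monic orthogonal polynomial of degree $q$ for $\tilde w$ produced by Gram--Schmidt (cf.\ \cite{L04}). Running the same computation for a second quadrature $(\omega^{(1)},r^{(1)})$ yields the identical polynomial, whence (using (\ref{u1}) for that quadrature) its interior nodes are the same zeros; since $r_q=r_q^{(1)}=1$ is prescribed, the ordered node vectors agree, and then (\ref{u3}) determines the weights via $\omega_j=\int_{-1}^1 I_j(x)w(x)\dd x=\omega_j^{(1)}$, giving $(\omega,r)=(\omega^{(1)},r^{(1)})$.

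I expect (\ref{u4}) to be the main obstacle, as it is the only part not reducible to a one-line application of exactness: one must recognise the modified weight $\tilde w$, verify the orthogonality of $\pi$ against the whole of $\mathcal{P}_{q-1}(-1,1)$, and then invoke the standard existence and uniqueness of monic orthogonal polynomials for a strictly positive $L^1$-weight. The remaining assertions follow simply by testing exactness against the judiciously chosen polynomials $p$, $I_j$, $I_j^2$, and $1$.
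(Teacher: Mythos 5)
Your proposal is correct and follows essentially the same route as the paper: parts (\ref{u1})--(\ref{u3}) by testing exactness against $p^2$, $I_j$, $I_j^2$ and the constant $1$, and part (\ref{u4}) via orthogonality with respect to the modified weight $\tilde w(x)=(1-x)w(x)$. The only cosmetic difference is that in (\ref{u4}) you show the node polynomial $\pi(x)=\prod_{j=0}^{q-1}(x-r_j)$ is $\tilde w$-orthogonal to $\mathcal{P}_{q-1}(-1,1)$ and invoke uniqueness of the monic orthogonal polynomial, whereas the paper starts from a Gram--Schmidt polynomial $p_q$ and shows the interior nodes are its zeros; the two arguments are equivalent.
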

  \begin{proof}
    For the proof \eqref{u1}, we assume that $Z\coloneqq\{r_j; j\in\{0,\ldots,q\} \}$ has strictly less than $q+1$ elements. Then we find a polynomial $p$ of degree at most $q$ such that $Z$ is the set of zeros of $p$. Furthermore, for $z\in Z$ there exists a polynomial $p_z$ of degree at most $q-1$ with the property $p_z(z)=1$ and $p_z=0$ on $Z\setminus \{z\}$. Thus, by exactness of the quadrature and $w>0$ a.e., we obtain
    \[
       0< \int_{-1}^1 p_z(x)^2 w(x) dx = \sum_{j\in \{k; r_k=z\}} \omega_j p_z(r_j)^2=\sum_{j\in \{k; r_k=z\}} \omega_j.
    \]
    Consequently, as $p^2$ has degree at most $2q$, we infer
    \begin{multline*}
       0< \int_{-1}^1 p(x)^2 w(x) dx = \sum_{j\in \{0,\ldots,q\}} \omega_j p(r_j)^2\\= \sum_{z\in Z}\sum_{j\in \{k; r_k=z\}}\omega_j p(r_j)^2=\sum_{z\in Z} p(z)^2\sum_{j\in \{k; r_k=z\}}\omega_j=0,  
    \end{multline*}
    a contradiction.
    
    Next, for \eqref{u2}, by \eqref{u1}, we observe that $I_j(x)$ in \eqref{u3} is well-defined for all $j\in \{0,\ldots,q\}$. Thus, for $j\in \{0,\ldots,q\}$, we obtain
    \[
       0<\int_{-1}^1 I_j(x)^2w(x)dx = \omega_j.
    \]
    Hence, we get for all $j\in\{0,\ldots,q\}$
    \[
      \omega_j \leq \sum_{\ell \in \{0,\ldots,q\}} \omega_\ell = \int_{-1}^1w(x)dx.
    \]
    The proof of \eqref{u3} is obvious.
    
    For the proof of \eqref{u4}, by the Gram--Schmidt orthonormalization procedure, we choose a polynomial $p_q\in \mathcal{P}_q(-1,1)$ such that $p_q$ is orthogonal to $\mathcal{P}_{q-1}(-1,1)$ with respect to $\langle \cdot,\cdot\rangle_{\tilde w}$. 
    
    Let $p\in \mathcal{P}_{q-1}(-1,1)$. Then the polynomial $(1-\cdot)p p_q$ has degree at most $2q$. Thus, by the choice of $p_q$ and the exactness of the quadrature, we obtain
    \begin{align*}
       0 &= \langle p,p_q\rangle_{\tilde w}
       \\ & = \int_{-1}^1 (1-x)p(x)p_q(x)w(x)dx
       \\ &= \sum_{j\in\{0,\ldots,q\}}\omega_j(1-r_j)p(r_j)p_q(r_j)
       \\ &=  \sum_{j\in\{0,\ldots,q-1\}}\omega_j(1-r_j)p(r_j)p_q(r_j)=\omega_i(1-r_i)p_q(r_i),
    \end{align*}
   if $p=I_i$ for one $i\in\{0,\ldots,q-1\}$. From \eqref{u1} and \eqref{u2}, we get $\omega_i(1-r_i)\neq 0$ (recall that $r_q=1$). Hence, $p_q(r_i)=0$ for all $i\in \{0,\ldots,q-1\}$. As $p_q$ has degree $q$, we obtain $r=r^{(1)}$. Hence, the assertion follows from the formula for $\omega$ in statement \eqref{u3}.
  \end{proof}
  
  The next proposition is concerned with the existence of the quadrature:
  \begin{prop}\label{p:exi} Let $w\in W$, $p_q\in \mathcal{P}_q(-1,1)$ such that $p_q\bot \mathcal{P}_{q-1}(-1,1)$ with respect to $\langle\cdot,\cdot\rangle_{\tilde w}$. Then the following assertions hold true:
  \begin{enumerate}
   \item\label{e1} $p_q$ has exactly $q$ distinct real roots all contained in $(-1,1)$;
   \item\label{e2} if $-1<r_0<r_1<\ldots<r_{q-1}<1$ denote the roots of $p_q$, then a $w$-Gau\ss--Radau quadrature is given by $((\omega_j)_{j\in\{0,\ldots,q\}},(r_j)_{j\in\{0,\ldots,q\}})$, where $r_q=1$ and \[ \omega_j =\int_{-1}^1 \prod_{k\in\{0,\ldots,q\}\setminus\{j\}}\frac{x-r_k}{r_j-r_k}w(x)dx \quad(j\in\{0,\ldots,q\}). \]
   \end{enumerate}
  \end{prop}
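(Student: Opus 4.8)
The plan is to treat the two assertions in turn, both resting on the orthogonality $p_q\perp\mathcal{P}_{q-1}(-1,1)$ with respect to $\langle\cdot,\cdot\rangle_{\tilde w}$ and on the fact, recorded just before the statement, that $\tilde w\in W$, so that $\langle\cdot,\cdot\rangle_{\tilde w}$ is a genuine scalar product with an almost everywhere strictly positive weight. First I would record that a nonzero $p_q$ necessarily has degree exactly $q$: were $\deg p_q\leq q-1$, the orthogonality would give $\langle p_q,p_q\rangle_{\tilde w}=0$ and hence $p_q=0$. We may thus assume $\deg p_q=q$ throughout.

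For \eqref{e1} I would run the classical sign-change argument for zeros of orthogonal polynomials. Let $x_1,\ldots,x_m$ be the pairwise distinct points in $(-1,1)$ at which $p_q$ changes sign, and set $s(x)=\prod_{i=1}^m(x-x_i)$ (with $s\equiv 1$ if $m=0$). By construction $s\cdot p_q$ does not change sign on $(-1,1)$ and is not the zero function, so strict positivity of $\tilde w$ yields
\[
\langle s,p_q\rangle_{\tilde w}=\int_{-1}^1 s(x)p_q(x)\tilde w(x)\dd x\neq 0.
\]
If $m\leq q-1$, then $s\in\mathcal{P}_{q-1}(-1,1)$ and orthogonality would force this integral to vanish, a contradiction. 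Hence $m=q$, and since $\deg p_q=q$ these $q$ distinct sign changes exhaust the roots, all of which are therefore simple and contained in $(-1,1)$.

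For \eqref{e2} I would write $r_0,\ldots,r_{q-1}$ for these roots and set $r_q\coloneqq 1$, so that $-1<r_0<\cdots<r_{q-1}<1=r_q$ and the ordering requirement of the definition holds. Since $p_q$ has degree $q$ with exactly the simple roots $r_0,\ldots,r_{q-1}$, we may write $p_q(x)=c\prod_{k=0}^{q-1}(x-r_k)$ with $c\neq 0$, whence the nodal polynomial obeys
\[
\Omega(x)\coloneqq\prod_{k=0}^{q}(x-r_k)=(x-1)\prod_{k=0}^{q-1}(x-r_k)=\frac{1}{c}(x-1)p_q(x).
\]
Given $p\in\mathcal{P}_{2q}(-1,1)$, I divide by $\Omega$ (of degree $q+1$) to obtain $p=Q\,\Omega+R$ with $Q\in\mathcal{P}_{q-1}(-1,1)$ and $R\in\mathcal{P}_{q}(-1,1)$. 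The decisive step is that the prescribed endpoint $r_q=1$ converts the weight: using $(x-1)w(x)=-\tilde w(x)$,
\[
\int_{-1}^1 Q(x)\Omega(x)w(x)\dd x=-\frac{1}{c}\int_{-1}^1 Q(x)p_q(x)\tilde w(x)\dd x=-\frac{1}{c}\langle Q,p_q\rangle_{\tilde w}=0,
\]
since $Q\in\mathcal{P}_{q-1}(-1,1)$. As $\Omega(r_j)=0$ we have $p(r_j)=R(r_j)$ for every $j$, and since $R\in\mathcal{P}_{q}(-1,1)$ is determined by its values at the $q+1$ distinct nodes, Lagrange interpolation gives $R=\sum_{j=0}^q R(r_j)I_j$ with $I_j(x)=\prod_{k\in\{0,\ldots,q\}\setminus\{j\}}\frac{x-r_k}{r_j-r_k}$. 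Integrating against $w$ and recalling $\omega_j=\int_{-1}^1 I_j(x)w(x)\dd x$ yields
\[
\int_{-1}^1 p(x)w(x)\dd x=\int_{-1}^1 R(x)w(x)\dd x=\sum_{j=0}^q R(r_j)\omega_j=\sum_{j=0}^q \omega_j p(r_j),
\]
which is precisely the exactness on $\mathcal{P}_{2q}(-1,1)$ demanded of a $w$-Gau\ss--Radau quadrature.

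I expect the only genuinely delicate point to be the sign-change argument in \eqref{e1}: one must count sign changes, i.e.\ interior roots of odd multiplicity, rather than roots with multiplicity, and then argue that $s\,p_q$ is of one sign and not identically zero, so that the strict positivity $\tilde w>0$ almost everywhere forces $\langle s,p_q\rangle_{\tilde w}\neq 0$. Everything in \eqref{e2} is then routine once the identity $(x-1)w=-\tilde w$ is invoked to absorb the fixed Radau node into the orthogonality weight; this is exactly the mechanism that lets the right-sided rule stay exact up to degree $2q$ while prescribing a single node.
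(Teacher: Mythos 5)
Your proof is correct and follows essentially the same route as the paper: part \eqref{e1} is the standard sign-change argument (the paper phrases it via interior roots of odd multiplicity, which is the same thing), and part \eqref{e2} is the same division of $p\in\mathcal{P}_{2q}(-1,1)$ by the degree-$(q+1)$ nodal polynomial $(1-x)p_q(x)$ followed by the orthogonality $\langle Q,p_q\rangle_{\tilde w}=0$ and Lagrange interpolation of the remainder. The explicit remark that $\deg p_q=q$ is a small but welcome addition that the paper leaves implicit.
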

  \begin{proof} For the proof of \eqref{e1}, let $O\subseteq (-1,1)$ be the set of roots of $p_q$ contained in $(-1,1)$ with odd multiplicity. Define $p(x)\coloneqq \prod_{z\in O} (x-z)$, $x\in [-1,1]$ ($p=1$, if $O=\emptyset$). We are done, once we show that $|O|=q$. Assume $|O|<q$. Then $p\in \mathcal{P}_{q-1}(-1,1)$. Moreover, the polynomial $p^*\colon x\mapsto (1-x)p(x)p_q(x)$ is non-zero and has no sign-change in $(-1,1)$. Without restriction, we assume $p^*\geq 0$. From
  \[
     0<\int_{-1}^1 p^*(x)w(x)dx=\langle p,p_q\rangle_{\tilde w}=0,
  \]
  we obtain a contradiction.
  
  In order to proof \eqref{e2}, let $p\in \mathcal{P}_{2q}(-1,1)$. We find polynomials $f\in \mathcal{P}_{q-1}(-1,1)$ and $g\in \mathcal{P}_q(-1,1)$ with the property $p= (x\mapsto f(x)(1-x)p_q(x))+ g$. Since $g$ is of degree at most $q$, we obtain
  \[
     g(x) = \sum_{j=0}^q g(r_j)\prod_{k\in\{0,\ldots,q\}\setminus\{j\}}\frac{x-r_k}{r_j-r_k}\quad(x\in(-1,1)). 
  \]
Then, using that $\langle f,p_q\rangle_{\tilde w}=0$, we compute
  \begin{align*}
     \int_{-1}^1 p w & = \int_{-1}^1 (f(x)(1-x)p_q(x)+ g(x)) w(x)dx
     \\ & = \int_{-1}^1 g(x) w(x)dx
     \\ & = \int_{-1}^1 \sum_{j=0}^q g(r_j)\prod_{k\in\{0,\ldots,q\}\setminus\{j\}}\frac{x-r_k}{r_j-r_k} w(x)dx
     \\ & = \sum_{j=0}^q g(r_j) \int_{-1}^1 \prod_{k\in\{0,\ldots,q\}\setminus\{j\}}\frac{x-r_k}{r_j-r_k} w(x)dx
     \\ & = \sum_{j=0}^q g(r_j) \omega_j.
  \end{align*}
Since $p(r_j)=f(r_j)(1-r_j)p_q(r_j)+ g(r_j)=g(r_j)$ for all $j\in \{0,\ldots,q\}$, the assertion is proved.
  \end{proof}
  We address the continuous dependence of the Gau\ss--Radau points on the weighting function as follows.
  \begin{thm}\label{t:cd} The mapping
    \begin{equation}\label{e:cd}
      (W,\|\cdot\|_{L^1(-1,1)}) \to \R^{q+1}\times (-1,1]^{q+1}, w\mapsto (\omega(w),r(w))
    \end{equation}
    is continuous, where $(\omega(w),r(w))$ denotes the $w$-Gau\ss--Radau quadrature.   
  \end{thm}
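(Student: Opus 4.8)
The plan is to factor the map \eqref{e:cd} through a chain of continuous maps, exploiting the explicit description of the quadrature furnished by Propositions \ref{p:uni} and \ref{p:exi}. Since the last node is always $r_q=1$, only the nodes $r_0,\dots,r_{q-1}$ and the weights $\omega_0,\dots,\omega_q$ need to be controlled. By Proposition \ref{p:exi} the nodes $r_0,\dots,r_{q-1}$ are the zeros of the polynomial $p_q\in\mathcal P_q(-1,1)$ that is $\langle\cdot,\cdot\rangle_{\tilde w}$-orthogonal to $\mathcal P_{q-1}(-1,1)$, while the weights are $\omega_j=\int_{-1}^1 I_j(x)w(x)dx$ with $I_j$ the Lagrange polynomials to the nodes. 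I would therefore establish continuity in three steps: (i) $w\mapsto$ (coefficients of the monic $p_q$); (ii) (coefficients of $p_q$) $\mapsto$ (ordered zeros $r_0,\dots,r_{q-1}$); (iii) $(w,r)\mapsto(\omega_0,\dots,\omega_q)$.

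For step (i), note first that $w\mapsto\tilde w$ is Lipschitz on $(W,\|\cdot\|_{L^1(-1,1)})$, since $\|\tilde w-\tilde w'\|_{L^1(-1,1)}\le 2\|w-w'\|_{L^1(-1,1)}$. The Gram entries $G_{ij}(w)=\langle x^i,x^j\rangle_{\tilde w}=\int_{-1}^1 x^{i+j}(1-x)w(x)dx$, $i,j\in\{0,\dots,q\}$, depend continuously (indeed linearly and boundedly, as $|x^{i+j}(1-x)|\le 2$ on $[-1,1]$) on $w\in L^1(-1,1)$. The monic orthogonal polynomial $p_q(x)=x^q+\sum_{j=0}^{q-1}c_j x^j$ is determined by $\langle p_q,x^k\rangle_{\tilde w}=0$ for $k\in\{0,\dots,q-1\}$, that is, by the linear system $\sum_{j=0}^{q-1}c_j G_{jk}=-G_{qk}$ whose coefficient matrix is the leading $q\times q$ block of $G$. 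Since $\tilde w>0$ a.e., this block is symmetric positive definite, hence invertible, and Cramer's rule shows that $c=(c_0,\dots,c_{q-1})$ depends continuously on the $G_{ij}$, and thus on $w$.

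Step (ii) carries the main work. By Proposition \ref{p:exi}\eqref{e1}, for every $w$ the polynomial $p_q$ has $q$ simple zeros, all contained in $(-1,1)$. Simplicity is essential: at each zero $r_j$ one has $p_q'(r_j)\ne 0$, so the implicit function theorem applied to $(c,r)\mapsto r^q+\sum_{j=0}^{q-1}c_j r^j$ yields a locally unique continuous branch $r=r_j(c)$ near $(c(w),r_j)$; composing with the continuous map $w\mapsto c(w)$ from step (i) shows that each node depends continuously on $w$, and the strict ordering $r_0<\dots<r_{q-1}$ is preserved under small perturbations precisely because the zeros remain distinct. (Alternatively, one invokes the continuity of the zero multiset of a monic polynomial in its coefficients and uses simplicity to separate the zeros.)

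For step (iii), fix $w$ and take $w_n\to w$ in $L^1(-1,1)$, writing $r^{(n)}$, $I_j^{(n)}$, $\omega_j^{(n)}$ for the associated data. Since $r^{(n)}\to r$ by step (ii) and the nodes remain distinct, the coefficients of $I_j^{(n)}(x)=\prod_{k\ne j}\frac{x-r_k^{(n)}}{r_j^{(n)}-r_k^{(n)}}$ converge to those of $I_j$, hence $I_j^{(n)}\to I_j$ uniformly on $[-1,1]$. Splitting
\[
  \omega_j^{(n)}-\omega_j=\int_{-1}^1\bigl(I_j^{(n)}-I_j\bigr)w_n\,dx+\int_{-1}^1 I_j\,(w_n-w)\,dx,
\]
the first integral tends to $0$ by the uniform convergence of $I_j^{(n)}$ together with the boundedness of $\|w_n\|_{L^1(-1,1)}$, and the second tends to $0$ since $I_j$ is bounded on $[-1,1]$ and $\|w_n-w\|_{L^1(-1,1)}\to0$. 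Thus $\omega_j^{(n)}\to\omega_j$, which together with steps (i) and (ii) gives continuity of \eqref{e:cd}. I expect the principal obstacle to be step (ii): it is the simplicity of the zeros provided by Proposition \ref{p:exi}\eqref{e1} that makes the \emph{ordered} nodes, rather than merely the unordered zero set, depend continuously on $w$.
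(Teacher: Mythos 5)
Your proof is correct, but it follows a genuinely different route from the one in the paper. The paper argues by compactness and uniqueness: given $w_n\to w$ in $L^1(-1,1)$, the bounds $0<\omega_j\le\int_{-1}^1 w$ and $r_j\in[-1,1]$ from Proposition \ref{p:uni} place the data $(\omega(w_n),r(w_n))$ in a compact set; any subsequential limit $(\overline\omega,\overline r)$ is seen, by passing to the limit in the exactness identity $\int_{-1}^1 pw_n=\sum_j\omega(w_n)_j\,p(r(w_n)_j)$ for $p\in\mathcal P_{2q}(-1,1)$, to be itself a $w$-Gau\ss--Radau quadrature, and hence equals $(\omega(w),r(w))$ by the uniqueness statement in Proposition \ref{p:uni}; the usual subsequence principle then yields continuity. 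You instead factor the map explicitly: $w\mapsto$ Gram matrix of $\langle\cdot,\cdot\rangle_{\tilde w}$ $\mapsto$ coefficients of the monic orthogonal $p_q$ (Cramer's rule on a positive definite system) $\mapsto$ ordered simple zeros (implicit function theorem, using Proposition \ref{p:exi}\eqref{e1}) $\mapsto$ weights via the Lagrange formula of Proposition \ref{p:uni}\eqref{u3}. Each of your steps is sound -- in particular you correctly identify that simplicity of the zeros is what makes the \emph{ordered} nodes, not just the zero set, continuous, and your splitting of $\omega_j^{(n)}-\omega_j$ uses only uniform convergence of $I_j^{(n)}$ and boundedness of $\|w_n\|_{L^1(-1,1)}$. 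The paper's soft argument is shorter and never mentions the orthogonal polynomial in the continuity proof itself; your constructive factorisation is longer but in principle quantitative, since every stage (matrix inversion, simple-root perturbation, Lagrange weights) admits explicit local Lipschitz bounds, which the compactness argument cannot provide.
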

  \begin{proof}
    Let $(w_n)_{n\in \N}$ be a sequence in $W$ and $w\in W$ such that 
    $w_n\to w$ in $L^1(-1,1)$. By definition and by Theorem \ref{p:uni}, 
    \[
    (\omega(w_n),r(w_n))\in [0,\sup_{k\in \N} 
    \|w_k\|_{L^1(-1,1)}]^{q+1}\times [-1,1]^{q+1}   
    \]
    for all $n\in \N$. Thus, there exists a convergent subsequence for which we 
    re-use the name with limit $(\overline{\omega},\overline{r})$. Note that 
    $\overline{r}_q=1$. Next, let $p\in \mathcal{P}_{2q}(-1,1)$. Then, for $n\in 
    \N$, we obtain
    \begin{align*}
      \int_{-1}^1 p w &= \lim_{n\to \infty} \int_{-1}^1 p w_n\\ 
      & =\lim_{n\to \infty} \sum_{j=0}^q \omega(w_n)_j p(r(w_n)_j)\\ 
      &= \sum_{j=0}^q \overline{\omega}_j p(\overline{r}_j).
    \end{align*}
    Hence, by Theorem \ref{p:uni}, we infer 
    $(\overline{\omega},\overline{r})=(\omega(w),r(w))$, which eventually implies 
    the assertion.
  \end{proof}
  \begin{cor}\label{c:chi1} For $\tau\in \R$ denote $w_\tau\colon x\mapsto \exp(-\rho \tau(x+1))(\in W)$ and let $(\omega^{(\tau)},r^{(\tau)})$ be the $w_\tau$-Gau\ss--Radau quadrature. For $\tau\in\R$, let $\chi_\tau\in \mathcal{P}_{q+1}(-1,1)$ such that\[
    \chi_\tau(r^{(\tau)}_j)=0\quad(j\in\{0,\ldots,q\}),\, \chi_\tau(-1)=1.
  \]
  Then, for every compact set $K\subset \R$, we have
  \[
     \sup_{\tau\in K} \int_{-1}^1 \chi_\tau^2w_\tau <\infty.
  \]  
  \end{cor}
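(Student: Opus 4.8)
The plan is to make the polynomial $\chi_\tau$ explicit through its prescribed roots, to control its normalising constant uniformly in $\tau$ by combining the continuity result of Theorem \ref{t:cd} with the compactness of $K$, and then to bound the resulting integrand by elementary pointwise estimates.

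First I would check that $\tau\mapsto w_\tau$ is continuous from $\R$ into $(W,\|\cdot\|_{L^1(-1,1)})$: for fixed $x$ the map $\tau\mapsto\exp(-\rho\tau(x+1))$ is continuous and locally uniformly bounded on the compact interval, so dominated convergence gives $\|w_{\tau_n}-w_\tau\|_{L^1(-1,1)}\to0$ whenever $\tau_n\to\tau$. Hence Theorem \ref{t:cd} applies and $\tau\mapsto r^{(\tau)}$ is continuous; by item \eqref{e1} of Proposition \ref{p:exi} the nodes $r^{(\tau)}_0,\dots,r^{(\tau)}_{q-1}$ lie in the open interval $(-1,1)$ while $r^{(\tau)}_q=1$. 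Since the $q+1$ nodes are distinct (Proposition \ref{p:uni}, item \eqref{u1}) and $\chi_\tau\in\mathcal P_{q+1}(-1,1)$ vanishes at all of them with $\chi_\tau(-1)=1$, we necessarily have
\[
   \chi_\tau(x)=\prod_{j=0}^q\frac{x-r^{(\tau)}_j}{-1-r^{(\tau)}_j}.
\]

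The crucial step is to bound this expression uniformly for $\tau\in K$. By continuity, for each $j\in\{0,\dots,q-1\}$ the set $\{r^{(\tau)}_j:\tau\in K\}$ is a compact subset of $(-1,1)$ and is therefore bounded away from $-1$; thus there is some $a>-1$ with $r^{(\tau)}_j\ge a$ for all $\tau\in K$ and all $j<q$. Consequently the denominator satisfies $\bigl|\prod_{j=0}^q(-1-r^{(\tau)}_j)\bigr|=\prod_{j=0}^q(1+r^{(\tau)}_j)\ge 2\,(1+a)^q>0$, while the numerator obeys $\prod_{j=0}^q|x-r^{(\tau)}_j|\le 2^{q+1}$ for every $x\in[-1,1]$. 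Hence $\chi_\tau(x)^2$ is bounded by a constant depending only on $q$ and $a$, and thereby only on $q$ and $K$, uniformly in $x\in[-1,1]$ and $\tau\in K$.

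It then remains to note that $w_\tau(x)=\exp(-\rho\tau(x+1))$ is jointly continuous, hence bounded, on the compact set $[-1,1]\times K$. Multiplying the two uniform bounds and integrating over the interval of length $2$ yields $\sup_{\tau\in K}\int_{-1}^1\chi_\tau^2 w_\tau<\infty$, as claimed. The only genuine obstacle is guaranteeing that the roots stay away from $-1$, and this is precisely where Theorem \ref{t:cd} together with the compactness of $K$ is indispensable; the remaining estimates are crude and pointwise.
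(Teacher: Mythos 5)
Your proof is correct, and it takes a noticeably more elementary route than the paper's. You identify the same two essential ingredients --- the explicit product representation $\chi_\tau(x)=\prod_{j=0}^q\frac{x-r^{(\tau)}_j}{-1-r^{(\tau)}_j}$ and the continuity of $\tau\mapsto r^{(\tau)}$ furnished by Theorem \ref{t:cd} together with compactness of $K$ --- but you then convert them into a uniform sup-norm bound on $\chi_\tau$ over $[-1,1]\times K$ (via the uniform lower bound $1+r^{(\tau)}_j\ge 1+a>0$ for $j<q$) and finish by crudely integrating a bounded function against a bounded weight. The paper instead argues by contradiction: it extracts a convergent subsequence $\tau_n\to\tau$, discards the $j=q$ factor of $\chi_{\tau_n}^2$ (which is at most $1$ on $[-1,1]$) so that the remaining integrand is a polynomial of degree $2q$, evaluates the integral \emph{exactly} by the Gau\ss--Radau rule --- only the node $r_q=1$ contributes --- and passes to the limit using the continuity of both the weights $\omega^{(\tau)}$ and the nodes. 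Your argument buys simplicity: it never invokes the quadrature weights or the exactness property, and it yields a direct uniform bound rather than a sequential contradiction. The paper's argument buys an explicit closed-form value $\omega^{(\tau)}_q\prod_{j=0}^{q-1}\bigl(\frac{1-r^{(\tau)}_j}{1+r^{(\tau)}_j}\bigr)^2$ bounding the integral, which is sharper, though that sharpness is not needed for the application in Corollary \ref{c:chi2}. Both proofs ultimately rest on the same fact that the nodes stay uniformly away from $-1$, so the difference is one of mechanics rather than of substance; your version is a legitimate and arguably cleaner alternative.
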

  \begin{proof}
   Assume by contradiction that there exists $(\tau_n)_n$ convergent to some $\tau$ with the property
   \[
     \int_{-1}^1 \chi_{\tau_n}^2w_{\tau_n}\to \infty.
   \]
   Using Theorem \ref{t:cd}, we compute for $n\in \N$
   \begin{align*}
     \int_{-1}^1 \chi_{\tau_n}^2w_{\tau_n} & = \int_{-1}^1 \prod_{j=0}^q \Big(\frac{x-r^{(\tau_n)}_j}{-1-r^{(\tau_n)}_j}\Big)^2 w_{\tau_n}(x)dx
     \\ & \leq \int_{-1}^1 \prod_{j=0}^{q-1} \Big(\frac{x-r^{(\tau_n)}_j}{-1-r^{(\tau_n)}_j}\Big)^2 w_{\tau_n}(x)dx
     \\ & = \sum_{\ell=0}^q \omega^{(\tau_n)}_\ell \prod_{j=0}^{q-1} \Big(\frac{r^{(\tau_n)}_\ell-r^{(\tau_n)}_j}{1+r^{(\tau_n)}_j}\Big)^2
     \\ & =  \omega^{(\tau_n)}_q \prod_{j=0}^{q-1} \Big(\frac{1-r^{(\tau_n)}_j}{1+r^{(\tau_n)}_j}\Big)^2
     \\ & \to \omega^{(\tau)}_q \prod_{j=0}^{q-1} \Big(\frac{1-r^{(\tau)}_j}{1+r^{(\tau)}_j}\Big)^2\quad(n\to\infty).
   \end{align*}
   But,
   \[
     0\leq \omega^{(\tau)}_q \prod_{j=0}^{q-1} \Big(\frac{1-r^{(\tau)}_j}{1+r^{(\tau)}_j}\Big)^2=\int_{-1}^1 \prod_{j=0}^{q-1} \Big(\frac{x-r^{(\tau)}_j}{-1-r^{(\tau)}_j}\Big)^2 w_{\tau}(x)dx<\infty,
   \]
  which contradicts the assumption.
  \end{proof}
  The next two corollaries are the ones needed in Subsection \ref{su:err1}, that is, for the error estimate with respect to the time-discretization. Beforehand, we introduce for a bounded interval $I\subseteq \R$ the mapping
  \begin{align*}
      \phi_I \colon (-1,1) &\to I,
      \\                   x&\mapsto \frac{a+b}{2}+\frac{b-a}{2}x,
  \end{align*}
  where $a\coloneqq \inf I$, $b\coloneqq \sup I$. Further, we set $|I|\coloneqq b-a$. 
  \begin{cor}\label{c:chi2} For $\tau\in \R$ let $\chi_\tau$ be as in Corollary \ref{c:chi1}. Let $K\geq 0$. Then
  \[
     \sup_{I\subseteq \R\text{ interval}, |I|\leq K} \frac{1}{|I|}\int_I \big(\chi_{|I|}(\phi_I^{-1}(t))\big)^2e^{-2\rho(t-\inf I)}dt<\infty.
  \]   
  \end{cor}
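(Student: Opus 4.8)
The plan is to reduce the claim directly to Corollary \ref{c:chi1} by means of the affine change of variables $t=\phi_I(x)$. First I would substitute $t=\phi_I(x)$ in the integral over $I$. Writing $a\coloneqq\inf I$, $b\coloneqq\sup I$, the map $\phi_I$ is affine with constant derivative $\phi_I'(x)=\frac{b-a}{2}=\frac{|I|}{2}$, so the measure transforms as $\dd t=\frac{|I|}{2}\dd x$, and the endpoints $t=a$, $t=b$ correspond to $x=-1$, $x=1$ (the distinction between $(-1,1)$ and $(-1,1]$ being immaterial, as it concerns a null set).

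The key observation is that the exponential weight is mapped \emph{exactly} onto $w_{|I|}$. Indeed, one computes
\[
   t-\inf I=\phi_I(x)-a=\frac{a+b}{2}-a+\frac{b-a}{2}x=\frac{b-a}{2}(x+1)=\frac{|I|}{2}(x+1),
\]
whence $e^{-2\rho(t-\inf I)}=\exp\!\big(-\rho|I|(x+1)\big)=w_{|I|}(x)$. Combining this identity with the Jacobian $\frac{|I|}{2}$ gives
\[
   \frac{1}{|I|}\int_I \big(\chi_{|I|}(\phi_I^{-1}(t))\big)^2 e^{-2\rho(t-\inf I)}\,\dd t
      =\frac{1}{2}\int_{-1}^1 \chi_{|I|}^2\,w_{|I|}.
\]
The right-hand side depends on $I$ only through the single scalar $|I|$.

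Finally, as $I$ ranges over all bounded intervals with $|I|\leq K$, the parameter $\tau=|I|$ ranges over the compact set $[0,K]$. Hence the supremum on the left equals $\frac{1}{2}\sup_{\tau\in[0,K]}\int_{-1}^1\chi_\tau^2 w_\tau$, which is finite by Corollary \ref{c:chi1} applied to the compact set $[0,K]$. I do not anticipate a genuine obstacle; the only step requiring a moment's care is to recognise that the transformed right-hand side is a function of $|I|$ alone, which is precisely what permits passing to the compact parameter set and invoking Corollary \ref{c:chi1}.
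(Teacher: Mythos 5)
Your proposal is correct and follows essentially the same route as the paper: the affine substitution $t=\phi_I(x)$ with Jacobian $|I|/2$, the identification of the transformed weight with $w_{|I|}$, and the reduction to Corollary \ref{c:chi1} over the compact parameter set $[0,K]$. The paper's proof is exactly this computation, stated slightly more tersely.
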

  \begin{proof}
    For $I=(a,b)\subseteq \R$ we compute
    \begin{align*}
      \frac{1}{b-a}\int_{a}^b \big(\chi_{|I|}(\phi_I^{-1}(t))\big)^2 e^{-2\rho(t-a)} dt
      &=\frac{1}{b-a}\int_{-1}^1 \big(\chi_{|I|}(x)\big)^2 e^{-2\rho(\phi_I(t)-a)}\phi_I'(x) dx
     \\ &=\frac{1}{2}\int_{-1}^1 \big(\chi_{|I|}(x)\big)^2 e^{-\rho((b-a)(x+1))} dx.
    \end{align*}
   Hence, the assertion follows from Corollary \ref{c:chi1}.
  \end{proof}
  The next corollary is concerned with the lowest Gau\ss--Radau point for different weights:
  \begin{cor}\label{c:t0} For $\tau\in \R$ let $w_\tau$ and $(\omega^{(\tau)},r^{(\tau)})$ be given as in Corollary \ref{c:chi1}. Let $T>0$. Then there exists $c>0$ such that for all intervals $I\subseteq \R$ with $|I|\leq T$ and $0\leq\tau \leq T$ we have
  \[
     \phi_I(r^{(\tau)}_0)-\inf I \geq c |I|.
  \]   
  \end{cor}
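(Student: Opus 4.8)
The plan is to reduce the asserted inequality to a uniform lower bound for the smallest Gau\ss--Radau node $r^{(\tau)}_0$ and then to extract that bound from the continuous-dependence result, Theorem \ref{t:cd}, by a compactness argument over $\tau\in[0,T]$.

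First I would make the left-hand side explicit. Since $\phi_I(x)-\inf I=\tfrac{|I|}{2}(1+x)$ for $x\in(-1,1)$, we have
\[
   \phi_I(r^{(\tau)}_0)-\inf I=\frac{|I|}{2}\bigl(1+r^{(\tau)}_0\bigr),
\]
so the claim $\phi_I(r^{(\tau)}_0)-\inf I\geq c|I|$ holds for all intervals $I$ with $|I|\leq T$ precisely when $1+r^{(\tau)}_0\geq 2c$. In particular the interval $I$ enters only through its length $|I|$, and the problem decouples into showing that $\tau\mapsto 1+r^{(\tau)}_0$ is bounded below by a strictly positive constant on $[0,T]$.

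Next, for each fixed $\tau$, Proposition \ref{p:exi}\eqref{e1} guarantees that the nodes $r^{(\tau)}_0<\dots<r^{(\tau)}_{q-1}$ (the roots of the polynomial $p_q$ orthogonal to $\mathcal{P}_{q-1}(-1,1)$ with respect to $\langle\cdot,\cdot\rangle_{\tilde w_\tau}$) all lie in the \emph{open} interval $(-1,1)$; hence $r^{(\tau)}_0>-1$ and $1+r^{(\tau)}_0>0$ pointwise in $\tau$. It then remains to upgrade this pointwise positivity to a uniform one. To this end I would invoke Theorem \ref{t:cd}: the map $w\mapsto r(w)$ is continuous on $(W,\|\cdot\|_{L^1(-1,1)})$, and $\tau\mapsto w_\tau$ with $w_\tau(x)=\exp(-\rho\tau(x+1))$ is continuous from $[0,T]$ into $W$ by dominated convergence (the functions $w_\tau$ are uniformly sandwiched between $\mathrm{e}^{-2\rho T}$ and $1$ on $(-1,1)$). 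Composing the two continuous maps shows that $\tau\mapsto r^{(\tau)}_0$ is continuous on the compact set $[0,T]$.

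Finally, a continuous, strictly positive function on a compact set attains a positive minimum, so $2c:=\min_{\tau\in[0,T]}\bigl(1+r^{(\tau)}_0\bigr)>0$ is well defined and yields $\phi_I(r^{(\tau)}_0)-\inf I\geq c|I|$ for all $I$ with $|I|\leq T$ and all $\tau\in[0,T]$, as claimed. The only point genuinely requiring care is the uniformity in $\tau$, and this is exactly what Theorem \ref{t:cd} supplies; the remaining ingredients (the elementary computation of $\phi_I$, the $L^1$-continuity of $\tau\mapsto w_\tau$, and compactness of $[0,T]$) are routine.
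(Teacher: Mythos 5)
Your proof is correct and follows essentially the same route as the paper: both reduce the claim to a uniform positive lower bound for $1+r^{(\tau)}_0$ over $\tau\in[0,T]$ and obtain it from Theorem \ref{t:cd} together with compactness of $[0,T]$ (the paper phrases this as compactness of the image $\{(\omega^{(\tau)},r^{(\tau)}):\tau\in[0,T]\}$ in $\R^{q+1}\times(-1,1]^{q+1}$, you phrase it as a continuous strictly positive function attaining its minimum). Your explicit computation $\phi_I(x)-\inf I=\tfrac{|I|}{2}(1+x)$ and the citation of Proposition \ref{p:exi} for $r^{(\tau)}_0>-1$ are fine.
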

  \begin{proof}
    We observe that $\R\ni \tau \mapsto \big(t\mapsto e^{-\rho \tau(t+1)}\big)\in (W,\|\cdot\|_{L^1(-1,1)})$ is continuous. Hence, the set 
    \[
       \{ \big(t\mapsto e^{-\rho \tau(t+1)}\big); \tau\in [0,T]\}\subseteq (W,\|\cdot\|_{L^1(-1,1)})
    \]
    is compact. Thus, by the continuous dependence of the Gau\ss--Radau point on the weighting function (see \eqref{e:cd}), we obtain that 
    \[
       \{ (\omega^{(\tau)},r^{(\tau)}); \tau\in [0,T]\}\subseteq \R^{q+1}\times (-1,1]^{q+1}
    \]
    is compact, as well. In particular, there exists $c>0$ with the property $r^{(\tau)}_0-(-1)\geq c$. Hence, we obtain for all $\tau\in[0,T]$ and intervals $I\subseteq \R$ with $|I|\leq T$
    \[
       \phi_I(r^{(\tau)}_0)-\inf I = \phi_I(r^{(\tau)}_0)-\phi_I(-1) = |I|(r^{(\tau)}_0-(-1)) \geq c |I|.\qedhere
    \]
  \end{proof}

\end{document}